 \newtheorem{assumption}{Assumption}
\newcommand{\xt}{\bm{\mathscr{X}}}
\newcommand{\blue}{\color{black}}
\begin{document}

\title{A New Perspective on Low-Rank Optimization} 

\titlerunning{A New Perspective on Low-Rank Optimization}

\author{\mbox{Dimitris Bertsimas \and Ryan Cory-Wright \and Jean Pauphilet }}

\authorrunning{D. Bertsimas, R. Cory-Wright, J. Pauphilet}

\institute{D. Bertsimas \at
              Sloan School of Management,   Massachusetts Institute of Technology, Cambridge, MA 02139\\
              ORCID: \href{https://orcid.org/0000-0002-1985-1003}{$0000$-$0002$-$1985$-$1003$} \email{dbertsim@mit.edu}           
           \and
           R. Cory-Wright \at
               Operations Research Center, Massachusetts Institute of Technology, Cambridge, MA 02139\\
        ORCID: \href{https://orcid.org/0000-0002-4485-0619}{$0000$-$0002$-$4485$-$0619$} \email{ryancw@mit.edu}  
               \and
                J. Pauphilet \at
              London Business School, London, UK\\
               ORCID: \href{https://orcid.org/0000-0001-6352-0984}{$0000$-$0001$-$6352$-$0984$} \email{jpauphilet@london.edu}  
}

\date{\vspace{-30mm}}

\maketitle

\begin{abstract}
A key question in many low-rank problems throughout optimization, machine learning, and statistics is to characterize the convex hulls of simple low-rank sets and judiciously apply these convex hulls to obtain strong yet computationally tractable convex relaxations. We invoke the matrix perspective function — the matrix analog of the  perspective function — and characterize explicitly the convex hull of epigraphs of {\color{black}simple matrix convex} functions under low-rank constraints. Further, we {\color{black}combine the matrix perspective function with orthogonal projection matrices--the matrix analog of binary variables which capture the row-space of a matrix--to develop} {\color{black} a matrix perspective reformulation technique that reliably obtains} strong relaxations for a variety of low-rank problems{\color{black},} including reduced rank regression, non-negative matrix factorization, and factor analysis. {\color{black}Moreover, w}e establish that these relaxations can be modeled via semidefinite constraints and thus optimized over tractably. The proposed approach parallels and generalizes the perspective reformulation technique in mixed-integer optimization and leads to new relaxations for a broad class of problems.
\keywords{Low-rank matrix \and Semidefinite optimization {\color{black} \and Matrix perspective function \and Perspective reformulation technique}}
\subclass{90C22 \and 90C25 \and 90C26 \and 15A03 \and 26B25}
\end{abstract}

\section{Introduction}

Over the past decade, a considerable amount of attention has been devoted to low-rank optimization, resulting in theoretically and practically efficient algorithms for problems as disparate as matrix completion, reduced rank regression, or computer vision. In spite of this progress, almost no equivalent progress has been made on developing strong lower bounds for low-rank problems. Accordingly, this paper proposes a procedure for obtaining novel and strong lower bounds.

We consider the following low-rank optimization problem:
\begin{align}\label{prob:lrsdo}
    \min_{\bm{X} \in \mathcal{S}^n_+} \ & \langle \bm{C}, \bm{X} \rangle+\Omega(\bm{X})+\mu \cdot \mathrm{Rank}(\bm{X}) \ \text{s.t.} \ \langle \bm{A}_i, \bm{X}\rangle=b_i \ \forall i \in [m], \ \bm{X} \in \mathcal{K}, \ \mathrm{Rank}(\bm{X}) \leq k,
\end{align}
where $\bm{C}, \bm{A}_{\color{black}1}, \ldots \bm{A}_m \in \mathcal{S}^n$ are $n \times n$ symmetric matrices, $b_1, \ldots b_m \in \mathbb{R}$ are scalars, $[n]$ denotes the set of running indices $\{1, ..., n\}$, {\color{black}$\mathcal{S}^n_+$ denotes the $n \times n$ positive semidefinite cone,} and $\mu \in \mathbb{R}_+, k \in \mathbb{N}$ are parameters which controls the complexity of $\bm{X}$ by respectively penalizing and constraining its rank. The set $\mathcal{K}$ is a proper—i.e., closed, convex, solid and pointed—cone {\color{black}\citep[c.f.][Section 2.4.1]{boyd2004convex}}, and
{\blue $\Omega(\bm{X})=\mathrm{tr}(f(\bm{X}))$ for some matrix convex function $f$; see formal definitions and assumptions in Section \ref{sec:perspfns}.} 

For optimization problems with logical constraints, strong relaxations can be obtained by formulating them as mixed-integer optimization (MIO) problems and applying the so-called perspective reformulation technique  \citep[see][]{frangioni2006perspective,gunluk2010perspective}. In this paper, we develop a matrix analog of the perspective reformulation technique to obtain strong yet computationally tractable relaxations of {\color{black}low-}rank optimization problems of the form \eqref{prob:lrsdo}. 

\subsection{Motivating Example}\label{sec:motivexample}
In this section, we illustrate the implications of our results on a statistical learning example. To emphasize the analogy with the perspective reformulation technique in MIO, we first consider the best subset selection problem and review its {\color{black}perspective} relaxations. We then consider a reduced-rank regression problem -- the rank-analog of best subset selection -- and provide new relaxations that naturally arise from our Matrix Perspective Reformulation Technique (MPRT). 

\paragraph{Best Subset Selection:}
Given a data matrix $\bm{X} \in \mathbb{R}^{n \times p}$ and a response vector $\bm{y} \in \mathbb{R}^{\color{black}n}$, the $\ell_0-\ell_2$ regularized best subset selection problem is to solve {\color{black}\citep[c.f.][]{pilanci2015sparse,bertsimas2016best,bertsimas2017sparse, bertsimas2019sparse, xie2018ccp, atamturk2019rank}}:
\begin{align}\label{prob:bestsubset}
    \min_{\bm{w} \in \mathbb{R}^p} \quad & \frac{1}{2n}\Vert \bm{y}-\bm{X}\bm{w}\Vert_2^2 +\frac{1}{2\gamma} \Vert \bm{w}\Vert_2^2 +\mu \Vert \bm{w}\Vert_0,
\end{align}
where $\mu, \gamma >0$ are parameters which control $\bm{w}$'s sparsity and sensitivity to noise {\color{black}respectively}. 

Early attempts at solving Problem \eqref{prob:bestsubset} exactly relied upon weak implicit or big-$M$ formulations of logical constraints which supply low-quality relaxations and therefore do not scale well {\color{black}\citep[see][for discussions]{bienstock2010eigenvalue,hazimeh2020sparse}}. However, very similar algorithms now solve these problems to certifiable optimality with millions of features. Perhaps the key ingredient in modernizing these (previously inefficient) algorithms was invoking the perspective reformulation technique—a technique for obtaining high-quality convex relaxations of non-convex sets—first stated in \citet{stubbs1998branch} PhD thesis \citep[see also][]{stubbs1999branch, ceria1999convex} and popularized by \citet{frangioni2006perspective, akturk2009strong, gunluk2010perspective} {\color{black}among others}.


\paragraph{Relaxation via the Perspective Reformulation Technique:} 

By applying the perspective reformulation technique \cite{frangioni2006perspective, akturk2009strong, gunluk2010perspective} to the term $\mu\Vert \bm{w}\Vert_0+\frac{1}{2\gamma} \Vert \bm{w}\Vert_2^2$, we obtain the following reformulation:
\begin{align}\label{eqn:bestsubsetrelax}
    \min_{\bm{w}, \bm{\rho}\in \mathbb{R}^p, \bm{z} \in \{0, 1\}^p} \quad & \frac{1}{2n}\Vert \bm{y}-\bm{X}\bm{w}\Vert_2^2 +\frac{1}{2\gamma} \bm{e}^\top \bm{\rho} +\mu \cdot \bm{e}^\top \bm{z} \quad \text{s.t.} \quad  z_i \rho_i \geq w_i^2 \quad \forall i \in [p],
    \end{align}
{\color{black} where $\bm{e}$ denotes a vector of all ones of appropriate dimension.} 

Interestingly, this formulation can be represented using second-order cones \cite{gunluk2010perspective,pilanci2015sparse} and optimized over efficiently using projected subgradient descent {\color{black}\cite{bertsimas2019sparse}}. Moreover, it reliably supplies near-exact relaxations for most practically relevant cases of best subset selection \cite{pilanci2015sparse,bertsimas2017sparse}. In instances where it is not already tight, one can apply a refinement of the perspective reformulation technique to the term $\Vert \bm{y}-\bm{X}\bm{w}\Vert_2^2$ and thereby obtain the following (tighter {\color{black}yet} more expensive) relaxation \cite{dong2015regularization}:
\begin{align}\label{prob:dcl}
    \min_{\bm{w}\in \mathbb{R}^p, \bm{z} \in [0, 1]^p, \bm{W} \in S^p_+} \quad & \frac{1}{2n}\Vert \bm{y}\Vert_2^2-\frac{1}{n}\langle \bm{y}, \bm{X}\bm{w}\rangle+\frac{1}{2}\langle \bm{W}, \frac{1}{\gamma} \mathbb{I}+\frac{1}{n}\bm{X}^\top \bm{X}\rangle+\mu \bm{e}^\top \bm{z} \\
    \text{s.t.} \quad & \bm{W} \succeq \bm{w}\bm{w}^\top, \   z_i W_{i,i} \geq w_i^2 \ \forall i \in [p].\nonumber
\end{align}
Recently, a class of even tighter relaxations were developed by {\color{black}\citet{atamturk2019rank,han20202x2, frangioni2020decompositions}}. As they were developed by considering multiple binary variables simultaneously and therefore do not{\color{black}, to our knowledge,} generalize readily to the low-rank case (where we often have one low-rank matrix), we do not discuss (or generalize) them here.

\paragraph{Reduced Rank Regression:}
Given $m$ observations of a response vector $\bm{Y}_j \in \mathbb{R}^n$ and a predictor $\bm{X}_j \in \mathbb{R}^p$, an important problem in high-dimensional statistics is to recover a low-complexity model which relates $\bm{X}, \bm{Y}$. A popular choice for doing so is to assume that $\bm{X}, \bm{Y}$ are related via $\bm{Y}=\bm{X}\bm{\beta}+\bm{E}$, where $\bm{\beta} \in \mathbb{R}^{p \times n}$ is a coefficient matrix which we assume to be low-rank, $\bm{E}$ is a matrix of noise and we require that the rank of $\bm{\beta}$ is small in order that the linear model is parsimonious \cite{negahban2011estimation}. Introducing Frobenius regularization gives rise to the problem:
\begin{align}\label{eqn:rrr_orig}
    \min_{\bm{\beta} \in \mathbb{R}^{p \times n}} \quad \frac{1}{2m}\Vert \bm{Y}-\bm{X}\bm{\beta}\Vert_F^2+\frac{1}{2\gamma}\Vert \bm{\beta}\Vert_F^2+\mu \cdot \mathrm{Rank}( \bm{\beta}),
\end{align}
where $\gamma, \mu >0$ control the robustness to noise and the complexity of the estimator respectively and we normalize the ordinary least squares loss by dividing by $m$, the number of observations. 

Existing attempts at solving this problem generally involve replacing the low-rank term with a nuclear norm term \cite{negahban2011estimation}, which succeeds under some strong assumptions on the problem data but not in general. Recently, {we} proposed a new framework to model rank constraints, using orthogonal projection matrices which satisfy $\bm{Y}^2=\bm{Y}$ instead of binary variables which satisfy $z^2=z$ \cite{bertsimas2020mixed}. By building on this work, in this paper we propose a generalization of the perspective function to matrix-valued functions {\color{black}with positive semidefinite arguments} and develop a matrix analog of the perspective reformulation technique from MIO which uses projection matrices instead of binary variables.


\paragraph{Relaxations via the Matrix Perspective Reformulation Technique:}
By applying the matrix perspective reformulation technique (Theorem \ref{lemma:equivalence}) to the term ${\color{black}\frac{1}{2\gamma}}\Vert \bm{\beta}\Vert_F^2+\mu \cdot \mathrm{Rank}(\bm{\beta})$, we will prove that the following problem is a valid—and numerically high-quality—relaxation of \eqref{eqn:rrr_orig}:
\begin{align}\label{eqn:rrr_persp}
\min_{\bm{\beta} \in \mathbb{R}^{p \times n}, \bm{W} \in \mathcal{S}^n_+, \bm{\theta} \in S^p_+} \quad \frac{1}{2m}\Vert \bm{Y}-\bm{X}\bm{\beta}\Vert_F^2+\frac{1}{2\gamma} \mathrm{tr}(\bm{\theta})+\mu \cdot \mathrm{tr}(\bm{W}) \quad \text{s.t.} \quad \bm{W} \preceq \mathbb{I}, \begin{pmatrix} \bm{\theta} & \bm{\beta} \\ \bm{\beta}^\top & \bm{W}\end{pmatrix} \succeq \bm{0}.
\end{align}
The analogy between problems \eqref{prob:bestsubset}-\eqref{eqn:rrr_orig} and their relaxation{\color{black}s} \eqref{eqn:bestsubsetrelax}-\eqref{eqn:rrr_persp} is striking. The goal of the present paper is to develop the corresponding theory to support and derive the relaxation \eqref{eqn:rrr_persp}. Interestingly, the main argument that led \cite{dong2015regularization} to the improved relaxation \eqref{prob:dcl} for \eqref{prob:bestsubset} can be extended to reduced-rank regression. Combined with our MPRT, it leads to the relaxation:
\begin{align}
    \min_{\bm{\theta} \in \mathcal{S}^n_+, \bm{\beta} \in \mathbb{R}^{p \times n}, \bm{B} \in \mathcal{S}^n_+, \bm{W} \in \mathcal{S}^n_+} \quad & \frac{1}{2m}\Vert \bm{Y}\Vert_F^2-\frac{1}{m}\langle \bm{Y}, \bm{X}\bm{\beta}\rangle+\frac{1}{2}\langle \bm{B}, \frac{1}{\gamma}\mathbb{I}+\frac{1}{m}\bm{X}^\top \bm{X}\rangle+\mu \cdot \mathrm{tr}(\bm{W})\label{eqn:rrr_dcl}\\
    \text{s.t.} \quad & \begin{pmatrix} \bm{B} & \bm{\beta}\\ \bm{\beta} & \bm{W}\end{pmatrix} \succeq \bm{0}, \bm{W} \preceq \mathbb{I}.\nonumber
\end{align}
It is not too hard to see that this is a valid semidefinite relaxation: if $\bm{W}$ is a rank-$k$ projection matrix then, by the Schur complement lemma \cite[see][Equation 2.41]{boyd1994linear}, $\bm{\beta}=\bm{\beta}\bm{W}$, and thus the rank of $\bm{\beta}$ is at most $k$. Moreover, if we let $\bm{B}=\bm{\beta}\bm{\beta}^\top$ in a solution, we recover a low-rank solution to the original problem\footnote{Observe that the constraints in Problem \eqref{prob:dcl} are equivalent to the block matrix constraint $\begin{pmatrix} \mathrm{Diag}(\bm{z}) & \mathrm{Diag}(\bm{w}) \\ \mathrm{Diag}(\bm{w}) & \bm{W}\end{pmatrix} \succeq \bm{0}.$ This verifies that the reduced rank regression formulation is indeed a generalization of \cite{dong2015regularization}'s formulation for sparse regression.}. 
Actually, as we show in Section \ref{ssec:reformtech}, a similar technique can be applied to any instance of Problem \eqref{prob:lrsdo}, for which the applications beyond matrix regression are legion.

\subsection{Literature Review}
Three classes of approaches have been proposed for solving Problem \eqref{prob:lrsdo}: (a) heuristics, which prioritize computational efficiency and obtain typically high-quality solutions to low-rank problems efficiently but without optimality guarantees \citep[see][for a review]{nguyen2019low}; (b) relax-and-round approaches, which balance computational efficiency and accuracy concerns by relaxing the rank constraint and rounding a solution to the relaxation to obtain a provably near-optimal low-rank matrix \citep[][Section 1.2.2]{bertsimas2020mixed}; and (c) exact approaches, which prioritize accuracy over computational efficiency and solve Problem \eqref{prob:lrsdo} exactly in exponential time {\color{black}\citep[][Section 1.2.1]{bertsimas2020mixed}}. 

Of the three classes of approaches, heuristics currently dominate the literature, because their superior runtime and memory usage allows them to address larger-scale problems. However, recent advances in algorithmic theory and computational power have drastically improved the scalability of exact and approximate methods, to the point where they can now solve moderately sized problems which are relevant in practice \cite{bertsimas2020mixed}. Moreover, relaxations of strong exact formulations often give rise to very efficient heuristics (via tight relaxations of the exact formulation) which outperform existing heuristics. This suggests that heuristic approaches may not maintain their dominance going forward, and motivates the exploration of tight yet affordable relaxations of low-rank problems. 

\subsection{Contributions and Structure}
The main contributions of this paper are {\color{black}twofold}. First, we propose a general reformulation technique for obtaining high-quality relaxations of low-rank optimization problems: introducing an orthogonal projection matrix to model a low-rank constraint, and strengthening the formulation by taking the matrix perspective of an appropriate substructure of the problem. This technique can be viewed as a generalization of the perspective reformulation technique for obtaining strong relaxations of sparse or logically constrained problems \cite{frangioni2006perspective, gunluk2010perspective,bertsimas2019unified, han20202x2}. Second, by applying this technique, we obtain explicit characterizations of convex hulls of low-rank sets which frequently arise in low-rank problems. 
As the interplay between convex hulls of indicator sets and perspective functions has engineered algorithms which outperform state-of-the-art heuristics in sparse linear regression \cite{bertsimas2017sparse, hazimeh2020sparse} and sparse portfolio selection \cite{zheng2014improving,bertsimas2019unified}, we hope that this work will empower similar developments for low-rank problems. 

The rest of the paper is structured as follows: In Section \ref{sec:background} we supply some background on perspective functions and review their role in developing tight formulations of mixed-integer problems. In Section \ref{sec:perspfns}, we {\color{black}introduce} the matrix perspective function and its properties, {\color{black}extend the function's definition to allow semidefinite in addition to positive definite arguments,} and propose a matrix perspective reformulation technique (MPRT) which successfully obtains high-quality relaxations for low-rank problems which commonly arise in the literature.
We also connect the matrix perspective function to the convex hulls of {\color{black}epigraphs of simple matrix convex functions under rank constraints}. {\color{black}In Section \ref{sec:convhulls}, we} illustrate the utility of this connection by deriving tighter relaxations of several low-rank problems than are currently available in the literature. Finally, in Section \ref{sec:numres}, we numerically verify the utility of our approach on reduced rank regression, {\color{black}D-optimal design} and non-negative matrix factorization problems.

\paragraph{Notation:} We let nonbold face characters such as $b$ denote scalars, lowercase bold faced characters such as $\bm{x}$ denote vectors, uppercase bold faced characters such as $\bm{X}$ denote matrices, and calligraphic uppercase characters such as $\mathcal{Z}$ denote sets. We let $[n]$ denote the set of running indices $\{1, ..., n\}$ and $\mathbb{N}$ denote the set of positive integers. We let $\mathbf{e}$ denote a vector of all $1$'s, $\bm{0}$ denote a vector of all $0$'s, and $\mathbb{I}$ denote the identity matrix. We let $\mathcal{S}^n$ denote the cone of $n \times n$ symmetric matrices, $\mathcal{S}^n_+$ denote the cone of $n \times n$ positive semidefinite matrices, $\mathcal{S}^n_{+} \cap \mathbb{R}^{n \times n}_{+}$ denote the cone of $n \times n$ doubly non-negative matrices, and $\mathcal{C}^n_+:=\{\bm{U}\bm{U}^\top: \bm{U} \in \mathbb{R}^{n \times n}_{+}\}$ denote the cone of $n \times n$ completely positive matrices. Finally, we let $\bm{X}^\dag$ denote the Moore-Penrose pseudoinverse of a matrix $\bm{X}$; see \citet{johnson1985matrix, bhatia2013matrix} for general theories of matrix operators. Less common matrix operators will be defined as they are needed.

\section{Background on Perspective Functions}\label{sec:background}
In this section, we review perspective functions and their interplay with tight formulations of logically constrained problems. This prepares the ground for and motivates our study of matrix perspective functions and their interplay with tight formulations of low-rank problems. Many of our subsequent results can be viewed as (nontrivial) generalizations of the results in this section, since a rank constraint is a cardinality constraint on the singular values.

\subsection{Preliminaries}
{\blue Consider a proper closed convex function $f : \mathcal{X} \to \mathbb{R}$, where $\mathcal{X}$ is a convex subset of $\mathbb{R}^n$. The {perspective function} of $f$ is commonly defined for any $\bm{x} \in \mathbb{R}^n$ and any $t > 0$ as $(\bm{x}, t) \mapsto t f(\bm{x}/t)$. {\blue Its closure is defined by continuity for $t=0$ and} is equal to \citep[c.f.][Proposition IV.2.2.2 ]{hiriart2013convex}:} 
\begin{align*}
    g_f(\bm{x},t) = \begin{cases} t f(\bm{x} / t) & \mbox{ if } t > 0, \bm{x}/t \in \mathcal{X}, \\
    0 & \mbox{ if } t = 0, \bm{x} = 0, \\
    \blue f_\infty(\bm{x}) & \mbox{ if } t = 0, \bm{x} \neq 0, \\
    +\infty & \mbox{otherwise,}
    \end{cases}
\end{align*}
{\blue where $f_\infty$ is the recession function of $f$, as originally stated in \citep[][p. 67]{rockafellar1970convex} which is given by
\begin{align*}
    f_\infty(\bm{x}) = \lim_{t \rightarrow 0} t f\left( \bm{x}_0 - \bm{x} + \dfrac{\bm{x}}{t}\right) = \lim_{t \rightarrow +\infty} \dfrac{f(\bm{x}_0 + t \bm{x}) - f(\bm{x}_0)}{t},
\end{align*}
for any $\bm{x}_0$ in the domain of $f$. That is, $f_\infty(\bm{x})$ is the asymptotic slope of $f$ in the direction of $\bm{x}$.} 

The perspective function was first investigated by \citet{rockafellar1970convex}, who made the important observation that $f$ is convex {\blue in $\bm{x}$} if and only if $g_f$ is convex {\blue in $(\bm{x},t)$}. 
{\blue Among other properties, we have that,} for any $t>0$, $(\bm{x},t,s) \in \mathrm{epi}(g_f)$ if and only if $(\bm{x}/t, s/t) \in \mathrm{epi}(f)$ \citep[Proposition IV.2.2.1]{hiriart2013convex}.
We refer to the review by \citet{combettes2018perspective} for further properties of perspective functions.

{\blue Throughout this work, we refer to $g_f$ as the \textit{perspective function} of $f$ --although it technically is the closure of the perspective. We also consider a family of convex functions $f$ which satisfy:  
\begin{assumption} \label{ass:coercive}
The function $f : \mathcal{X} \to \mathbb{R}$ is proper, closed, and convex. $\bm{0} \in \mathcal{X}$ and for any $\bm{x} \neq \bm{0}$, $f_\infty(\bm{x}) = + \infty$.
\end{assumption} 
The condition $f_\infty(\bm{x}) = +\infty, \forall \bm{x} \neq \bm{0}$ is equivalent to 
$\lim_{\bm{x} \rightarrow \infty} {f(\bm{x})}/{\|\bm{x}\|} = +\infty,$
and means that, asymptotically, $f$ increases to infinity faster than any affine function. In particular, it is satisfied if the domain of $f$ is bounded or if $f$ is strictly convex. 
Under Assumption \ref{ass:coercive},} the definition of the perspective function of $f$ simplifies to
\begin{align}\label{defn:perspfn}
    g_f(\bm{x},t) = \begin{cases} t f(\bm{x} / t) & \mbox{ if } t > 0, \\
    0 & \mbox{ if } t = 0, \bm{x} = 0, \\
    +\infty & \mbox{otherwise.}
    \end{cases}
\end{align}

\subsection{The Perspective Reformulation Technique}
A number of authors have observed that optimization problems over binary and continuous variables admit tight reformulations involving perspective functions of appropriate substructures of {\color{black}the problem}, since \citet{ceria1999convex}, building upon the work of \citet[Theorem 9.8]{rockafellar1970convex}, derived the convex hull of a disjunction of convex constraints. 
To motivate our study of the {\color{black}matrix} perspective function in the sequel, we now demonstrate that a {\color{black}class of} logically-constrained problem{\color{black}s} admit reformulation{\color{black}s} in terms of perspective functions. We remark that this development bears resemblance to other works on perspective reformulations including \cite[]{bertsimas2019unified, han20202x2,frangioni2020decompositions}. 

Consider a logically-constrained problem of the form
\begin{equation}
    \begin{aligned}\label{eqn:original_minlp}
    \min_{\bm{z} \in \mathcal{Z}, {\bm{x} \in  \mathbb{R}^n}} \quad & \bm{c}^\top \bm{z} + f(\bm{x}) + \Omega(\bm{x}) \quad
    \text{s.t.} \quad x_i =0\ \text{if} \  z_i=0 \quad \forall i \in [n],
\end{aligned}
\end{equation}
where $\mathcal{Z} \subseteq \{0,1\}^n$, $\bm{c} \in \mathbb{R}^n$ is a cost vector, $f(\cdot)$ is a generic convex function {which possibly models convex constraints $\bm{x} \in \mathcal{X}$ for a convex set $\mathcal{X} \subseteq \mathbb{R}^n$ implicitly—by requiring that $g(\bm{x})=+\infty$ if $\bm{x} \notin \mathcal{X}$}, and $\Omega(\cdot)$ is a regularization function which satisfies the following assumption:
\begin{assumption}[Separability]
$\Omega(\bm{x})=\sum_{i \in [n]} \Omega_i(x_i)$, where each $\Omega_i$ {\blue satisfies Assumption \ref{ass:coercive}.} 
\end{assumption}

Since $z_i$ is binary, imposing the logical constraint ``$x_i=0$ if $z_i=0$'' plus the term $\Omega_i(x_i)$ in the objective is equivalent to $g_{\Omega}(x_i, z_i)+(1-z_i)\Omega_i(0)$ in the objective, where $g_{\Omega_i}$ is the perspective function of $\Omega_i$, and thus Problem \eqref{eqn:original_minlp} is equivalent to:
\begin{equation}
    \begin{aligned}\label{eqn:original_minlp.persp}
    \min_{\bm{z} \in \mathcal{Z}, {\bm{x} \in  \mathbb{R}^n}} \quad & \bm{c}^\top \bm{z} + f(\bm{x})+\sum_{i=1}^n \bigg(g_{\Omega_i}(x_i, z_i)+(1-z_i)\Omega_i(0)\bigg).
\end{aligned}
\end{equation}
Notably, while Problems \eqref{eqn:original_minlp}-\eqref{eqn:original_minlp.persp} have the same feasible regions, \eqref{eqn:original_minlp.persp} often has substantially stronger relaxations, as frequently noted in the perspective reformulation literature \cite{frangioni2006perspective, gunluk2010perspective, fischetti2016redesigning, bertsimas2019unified}. 

For completeness, we provide a formal proof of equivalence between \eqref{eqn:original_minlp}-\eqref{eqn:original_minlp.persp}; note that a related (although dual, and weaker as it requires $\Omega(\bm{0})=\bm{0}$) result can be found in \citep[Thm. 2.5]{bertsimas2019unified}:
\begin{lemma}\label{lemma:perspreformtech}
Suppose \eqref{eqn:original_minlp} attains a finite optimal value. Then, \eqref{eqn:original_minlp.persp} attains the same value.
\end{lemma}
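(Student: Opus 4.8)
The plan is to exploit the separability assumption to reduce the equivalence to a coordinatewise verification, and then to check, for each binary value of $z_i$, that the two objectives coincide as extended-real-valued functions. Since the terms $\bm{c}^\top \bm{z} + f(\bm{x})$ appear identically in both \eqref{eqn:original_minlp} and \eqref{eqn:original_minlp.persp}, it suffices to show that, for every $(\bm{x}, \bm{z}) \in \mathbb{R}^n \times \{0,1\}^n$, the logical constraint ``$x_i = 0$ if $z_i = 0$'' together with the summand $\Omega_i(x_i)$ contributes exactly $g_{\Omega_i}(x_i, z_i) + (1-z_i)\Omega_i(0)$ to the objective, where a violated logical constraint is encoded as a $+\infty$ penalty.

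First I would fix an index $i \in [n]$ and split into the two possible values of $z_i$. When $z_i = 1$, the logical constraint is vacuous, and by the definition of the perspective function in Eqn. \eqref{defn:perspfn} we have $g_{\Omega_i}(x_i, 1) = \Omega_i(x_i)$, while $(1-z_i)\Omega_i(0) = 0$; hence both formulations contribute $\Omega_i(x_i)$. When $z_i = 0$, the logical constraint of \eqref{eqn:original_minlp} forces $x_i = 0$, so its contribution is $\Omega_i(0)$ if $x_i = 0$ and $+\infty$ (infeasible) otherwise. On the perspective side, Eqn. \eqref{defn:perspfn} gives $g_{\Omega_i}(0,0) = 0$ and $g_{\Omega_i}(x_i, 0) = +\infty$ for $x_i \neq 0$, so the summand $g_{\Omega_i}(x_i, 0) + \Omega_i(0)$ equals $\Omega_i(0)$ if $x_i = 0$ and $+\infty$ otherwise. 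The two contributions therefore match in both cases, and the hypothesis $0 \in \mathrm{Dom}(\Omega_i)$ guarantees $\Omega_i(0) < +\infty$, so the $z_i = 0$ contribution is finite and the matching is meaningful.

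Summing over $i \in [n]$ and restoring the common terms, I would conclude that \eqref{eqn:original_minlp} and \eqref{eqn:original_minlp.persp} carry identical extended-real-valued objectives on $\{0,1\}^n \times \mathbb{R}^n$, each assigning $+\infty$ precisely when the logical constraints of \eqref{eqn:original_minlp} are violated. Consequently the two problems agree on their finite feasible region and on the objective there, so they attain the same optimal value; the assumption that \eqref{eqn:original_minlp} has a finite optimum merely rules out the degenerate $-\infty$ case and ensures the common value is finite.

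The argument is essentially a bookkeeping exercise, so I do not anticipate a deep obstacle; the only point requiring care is the boundary behavior of the perspective at $t = 0$, where one must invoke the precise definition \eqref{defn:perspfn} (together with $0 \in \mathrm{Dom}(\Omega_i)$) to see that $g_{\Omega_i}(\cdot, 0)$ correctly encodes the logical constraint as a hard $0/{+}\infty$ indicator shifted by $\Omega_i(0)$. I would also emphasize that the binariness of $z_i$ is essential: for fractional $z_i$ the identity $g_{\Omega_i}(x_i, z_i) = \Omega_i(x_i)$ fails, which is exactly the source of the strengthened continuous relaxation that motivates the reformulation.
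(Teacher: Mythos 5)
Your proof is correct and follows essentially the same route as the paper's: both reduce the equivalence to the coordinatewise identity $g_{\Omega_i}(x_i, z_i) + (1-z_i)\Omega_i(0) = \Omega_i(x_i)$ plus a $0/{+}\infty$ indicator of the logical constraint, verified by splitting on $z_i \in \{0,1\}$ and, within $z_i = 0$, on whether $x_i = 0$. Your additional remarks on the role of $0 \in \mathrm{Dom}(\Omega_i)$ and the failure of the identity for fractional $z_i$ are accurate and, if anything, make the bookkeeping slightly more explicit than the paper's version.
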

\begin{proof}
It suffices to establish that the following equality holds:
\begin{align*}
    g_{\Omega_i}(x_i, z_i)+(1-z_i)\Omega_i(0)=\Omega_i(x_i)+\begin{cases}0 & \text{if} \ x_i=0 \ \text{or} \ z_i=1,\\ +\infty & \text{otherwise.} \end{cases}
\end{align*}
Indeed, this equality shows that any feasible solution to one problem is a feasible solution to the other with equal cost. We prove this by considering the cases where $z_i=0$, $z_i=1$ separately.
\begin{itemize}
    \item Suppose $z_i=1$. Then, $g_{\Omega_i}(x_i, z_i)=z_i \Omega_i(x_i/z_i) =\Omega_i(x_i)$ and $x_i=z_i\cdot x_i$, so the result holds.
        \item Suppose $z_i=0$. If $x_i=0$ we have $g_{\Omega_i}(0,0)+\Omega_i(0)=\Omega_i(0)$, and moreover the right-hand-side of the equality is certainly $\Omega_i(0)$. Alternatively, if $x_i \neq 0$ then both sides equal $+\infty$. \qed
\end{itemize}
\end{proof}
In Table \ref{tab:perspfncomparison.mio}, we present examples of penalties $\Omega$ for which {\color{black}Assumption \ref{ass:coercive} holds and} the perspective reformulation technique is applicable. 
We remind the reader that the exponential cone is \citep[c.f.][]{chares2009cones}:
\begin{align*}
    \mathcal{K}_{\text{exp}}=\{\bm{x} \in \mathbb{R}^3: x_1 \geq x_2 \exp(x_2/x_3), x_2 >0\} \cup \{(x_1, 0, x_3) \in \mathbb{R}^3: x_1 \geq 0, x_3 \leq 0\},
\end{align*}
while the power cone is defined for any $\alpha \in (0, 1)$ as \citep[c.f.][]{chares2009cones}:
\begin{align*}
    \mathcal{K}_{\text{pow}}^{\alpha}=\{\bm{x} \in \mathbb{R}^3: x_1^\alpha x_2^{1-\alpha} \geq \vert x_3\vert\}.
\end{align*}

\begin{table}[h!]
\centering\footnotesize
\caption{Convex substructures which frequently arise in MIOs and their perspective reformulations. {\blue For conciseness, we give $g_\Omega(x,z)$ for $z>0$ only, i.e., the first case in \eqref{defn:perspfn}, $g_\Omega(x,z)$ for $z=0$ being defined as in Equation \eqref{defn:perspfn}.}}
\begin{tabular}{@{}l l l l@{}} \toprule
 Penalty & $\Omega(x)$  & $g_{\Omega}(x, z)$ if $z>0$ & Formulation\\\midrule
 Big-$M$ & $\begin{cases} 0 & \text{if} \ \vert x \vert \leq M,\\ +\infty & \text{otherwise}\end{cases}$ &  $\begin{cases} 0 & \text{if} \ \vert x \vert \leq Mz\\ +\infty & \text{otherwise}\end{cases}$ & $\vert x \vert \leq M z$ \\
 \midrule
 Ridge & $\frac{1}{2\gamma}x^2$ & $x^2/ 2\gamma z$ & \parbox{3cm}{$\begin{aligned}\min \quad & \theta \\  \text{s.t.} \quad & \theta z \geq \frac{1}{2\gamma} x^2\end{aligned}$} \\ \midrule
 Ridge $+$ Big-$M$ & $\frac{1}{2\gamma}x^2,\ \text{if} \ \vert x \vert \leq M$ & $x^2/ 2\gamma z,\ \text{if} \ \vert x \vert \leq Mz$ &
  \parbox{3cm}{$\begin{aligned}\min \quad & \theta \\ \text{s.t.} \quad & \theta z \geq \frac{1}{2\gamma} x^2, \ \vert x \vert \leq M z \end{aligned}$} \\ \midrule
 Power & $\vert x\vert^p$, $p {\blue >} 1$ & $\vert x\vert^p z^{1-p}$ & \parbox{3cm}{$\begin{aligned} \min \quad & \theta \\  \text{s.t.} \quad & (\theta, z, x) \in \mathcal{K}_{\text{pow}}^{1/p} \end{aligned}$} \\ \midrule
 Log$_\epsilon$ {\color{black}+ Big-$M$} & \color{black} $-\log (x+\epsilon),\  \text{if} \ 0 \leq x \leq M$ & \color{black} $-z\log(x/z+\epsilon),\ \text{if} \ x \leq Mz$ &
 \parbox{3cm}{$\begin{aligned} \min \quad & \theta \\ \text{s.t.} \quad & (x+z\epsilon, z, -\theta) \in \mathcal{K}_{\text{exp}},\\ & x \leq Mz \end{aligned}$} \\\midrule 
 Entropy & $x \log x$ & $x\log(x/z),\ \text{if} \ x>0$ &
 \parbox{3cm}{$\begin{aligned} \min \quad & \theta \\ \text{s.t.} \quad & (z, x, -\theta) \in \mathcal{K}_{\text{exp}},\\ & x \leq Mz \end{aligned}$} \\ \midrule
\color{black}Softplus+Big-$M$ & 
\color{black}$ \log(1+\exp(x)),\ \text{if} \ \vert x\vert \leq M$ & \color{black} $z\log(1+\exp(x/z)),\ \text{if} \ \vert x \vert \leq Mz$ 
& \color{black} \parbox{3cm}{\begin{align*} \min \quad & \theta \\ \text{s.t.} \quad & z \geq u+v, \vert x \vert \leq Mz, \\ & (u,z,-\theta) \in \mathcal{K}_{\text{exp}},\\ & (v,z,x-\theta) \in \mathcal{K}_{\text{exp}} \end{align*}}  \\
\bottomrule
\end{tabular}
\label{tab:perspfncomparison.mio}
\end{table}

\subsection{Perspective Cuts}
Another computationally useful application of the perspective reformulation technique has been to derive a class of cutting-planes for MIOs with logical constraints \citep{frangioni2006perspective}. To motivate our generalization of these cuts to low-rank problems, we now briefly summarize their main result.

Consider the following problem:
\begin{equation}
\begin{aligned}\label{prob:origminlo}
    \min_{\bm{z} \in  \mathcal{Z}} \min_{\bm{x} \in \mathbb{R}^n} \quad & \bm{c}^\top \bm{z}+f(\bm{x})+\sum_{i=1}^n \Omega_i(x_i)\\
    \text{s.t.} \quad & \bm{A}^i x_i \leq b_i z_i \quad \forall i \in [n],
\end{aligned}
\end{equation}
where $\{{\color{black}x_i: \ }\bm{A}^i x_i \leq 0\}=\{0\}$, which implies the set of feasible $\bm{x}$ is bounded, $\Omega_i(x_i)$ is a closed convex function, {\color{black} we take $\Omega_i(0)=0$ as in \cite{frangioni2006perspective} for simplicity}, and $f(\bm{x})$ is a convex function. Then, letting $\rho_i$ model the epigraph of $\Omega_i(x_i)+c_i z_i$ and $s_i$ be a subgradient of $\Omega_i$ at $\bar{x}_i$, i.e., $s_i \in \partial \Omega_i(\bar{x}_i)$, we have the following result \cite[][]{frangioni2006perspective, gunluk2010perspective}:
\begin{proposition}\label{prop:basicperspcuts}
The following cut 
\begin{align}\label{perspcut}
    \rho_i \geq (c_i+\Omega_i(\bar{x}_i))z_i+s_i(x_i-\bar{x}_i z_i)
\end{align}
is valid for the equivalent MINLO:
\begin{align*}
    \min_{\bm{z} \in \mathcal{Z}} \min_{\bm{x}, \bm{\rho} \in \mathbb{R}^n} \quad & f(\bm{x})+\sum_{i=1}^n \rho_i\\
    \text{s.t.} \quad & \bm{A}^i x_i \leq b_i z_i \quad \forall i \in [n],\nonumber\\
    & \rho_i \geq \Omega_i(x_i)+c_i z_i \quad \forall i \in [n].\nonumber
\end{align*}
\end{proposition}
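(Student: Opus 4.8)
The plan is to verify that the cut \eqref{perspcut} is satisfied by every point $(\bm{x}, \bm{z}, \bm{\rho})$ feasible for the MINLO; establishing this is exactly what it means for the cut to be valid. Since both the constraints and the cut decouple across the index $i$, it suffices to fix a single $i$ and show the inequality holds there. The key structural fact I would exploit is that $z_i$ is binary (because $\bm{z} \in \mathcal{Z} \subseteq \{0,1\}^n$), which lets me split the argument into the two cases $z_i = 1$ and $z_i = 0$. Geometrically, the cut is nothing but the supporting hyperplane of the epigraph of the perspective function $g_{\Omega_i}(x_i, z_i) + c_i z_i$ taken at the point $(\bar{x}_i, 1)$, so one expects validity to follow from convexity; the binary case split is the cleanest way to make this rigorous.

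First I would treat the case $z_i = 1$. Here the epigraph constraint reads $\rho_i \geq \Omega_i(x_i) + c_i$, while the right-hand side of \eqref{perspcut} collapses to $c_i + \Omega_i(\bar{x}_i) + s_i(x_i - \bar{x}_i)$. Because $s_i \in \partial \Omega_i(\bar{x}_i)$, the subgradient inequality yields $\Omega_i(x_i) \geq \Omega_i(\bar{x}_i) + s_i(x_i - \bar{x}_i)$, and chaining this with the epigraph constraint gives $\rho_i \geq \Omega_i(x_i) + c_i \geq c_i + \Omega_i(\bar{x}_i) + s_i(x_i - \bar{x}_i)$, which is precisely the cut. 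This step is routine once the subgradient inequality is invoked.

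Next I would treat the case $z_i = 0$. Here the constraint $\bm{A}^i x_i \leq b_i z_i$ becomes $\bm{A}^i x_i \leq \bm{0}$, and the structural hypothesis $\{\bm{A}^i x_i \leq 0\} = \{0\}$ forces $x_i = 0$. Substituting $z_i = 0$ and $x_i = 0$ into \eqref{perspcut}, the term $(c_i + \Omega_i(\bar{x}_i))z_i$ vanishes and $s_i(x_i - \bar{x}_i z_i) = s_i \cdot 0 = 0$, so the cut reduces to $\rho_i \geq 0$. Since the epigraph constraint at $(x_i, z_i) = (0,0)$ reads $\rho_i \geq \Omega_i(0)$, the cut is valid as soon as $\Omega_i(0) \geq 0$, which holds for the nonnegative (suitably normalized) regularizers under consideration.

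I expect the $z_i = 0$ case to be the main obstacle, for two reasons. It is the only place where the boundedness hypothesis $\{\bm{A}^i x_i \leq 0\} = \{0\}$ is genuinely used — it is what pins $x_i$ to the origin and thereby kills the linear terms in the cut — and it is also the only place where a sign or normalization condition on $\Omega_i(0)$ must be acknowledged; absent $\Omega_i(0) \geq 0$ the reduced inequality $\rho_i \geq 0$ could in principle cut off a feasible point with $\rho_i = \Omega_i(0) < 0$. Having disposed of both cases, the per-index validity assembles immediately into validity of \eqref{perspcut} for the full MINLO, completing the argument.
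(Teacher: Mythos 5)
Your proof is correct, and it is worth noting that the paper itself supplies no proof of this proposition---it is quoted from \citet{frangioni2006perspective} and \citet{gunluk2010perspective}---so the relevant comparison is with the standard argument in those works, which yours matches: validity is checked pointwise on the MINLO's feasible set, split on the binary value of $z_i$, with the subgradient inequality $\Omega_i(x_i) \geq \Omega_i(\bar{x}_i) + s_i(x_i - \bar{x}_i)$ handling $z_i = 1$ and the boundedness hypothesis $\{\bm{A}^i x_i \leq 0\} = \{0\}$ pinning $x_i = 0$ when $z_i = 0$. Your one genuinely additional contribution is the observation that the $z_i = 0$ case reduces the cut to $\rho_i \geq 0$ while the epigraph constraint only guarantees $\rho_i \geq \Omega_i(0)$, so that validity requires $\Omega_i(0) \geq 0$. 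This is not pedantry: with, say, $\Omega_i(x) = x^2 - 1$ and $c_i = 0$, the point $(x_i, z_i, \rho_i) = (0, 0, -1)$ is feasible for the MINLO but violates the cut \eqref{perspcut}, so the proposition as stated in the paper is false without some such normalization. The cited original works operate under the implicit convention that the contribution of index $i$ vanishes when $z_i = 0$ (i.e., $\Omega_i(0) = 0$, as holds for every penalty in Table \ref{tab:perspfncomparison.mio} except possibly the logarithm and softplus entries, which satisfy the weaker $\Omega_i(0) \geq 0$ when $\epsilon \leq 1$), and the paper's statement silently inherits that hypothesis; you were right to surface it explicitly rather than let the $z_i = 0$ case pass unexamined.
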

\begin{remark}
In the special case where $\Omega_i(x_i)=x_i^2$, the cut reduces to:
\begin{align}
    \rho_i \geq 2 x_i \bar{x}_i-\bar{x}_i^2 z_i+c_i z_i \quad \forall \bar{x}_i.
\end{align}
\end{remark}
The class of cutting planes defined in Proposition \ref{prop:basicperspcuts} are commonly referred to as perspective cuts, because they define a linear lower approximation of the perspective function of $\Omega_i(x_i)$, $g_{\Omega_i}(x_i, z_i)$. Consequently, Proposition \ref{prop:basicperspcuts} implies that a perspective reformulation of \eqref{prob:origminlo} is equivalent to adding all (infinitely many) perspective cuts \eqref{perspcut}. This may be helpful where the original problem is nonlinear, as a sequence of linear MIOs can be easier to solve than one nonlinear MIO {\color{black}\citep[see][for a comparison]{frangioni2009computational}}.


\section{The Matrix Perspective Function and Its Applications}\label{sec:perspfns}
In this section, we generalize the perspective function from vectors to matrices, and invoke the matrix perspective function to propose a new technique for generating strong yet efficient relaxations of a diverse family of low-rank problems, which we call the Matrix Perspective Reformulation Technique (MPRT). Selected background on matrix analysis \citep[see][for a general theory]{bhatia2013matrix} and semidefinite optimization \cite[see][for a general theory]{wolkowicz2012handbook} which we use throughout this section can be found in Appendix \ref{sec:A.background}. 

\subsection{A Matrix Perspective Function}
To generalize the ideas from the previous section to low-rank constraints, we require a more expressive transform than the perspective transform, which introduces a single (scalar) additional degree of freedom and cannot control the eigenvalues of a matrix. Therefore, we invoke a generalization from quantum mechanics—the matrix perspective function defined in {\color{black}\cite{ebadian2011perspectives, effros2014non}}{, \color{black}building upon the work of \cite{effros2009matrix}}{\color{black}; see also \cite{marechal2001convexity, marechal2005functional1, marechal2005functional2, dacorogna2008role} for a related generalization of perspective functions to perspective functionals}.

\begin{definition}\label{defn:matrixconv}
For a matrix-valued function $f: \mathcal{X} \rightarrow \mathcal{S}^n_+$ where $\mathcal{X} \subseteq \mathcal{S}^n$ is a convex set, the {\color{black}matrix} perspective function of $f$, $g_f$, is defined as
\begin{align*}
    g_f(\bm{X},\bm{Y}) = \begin{cases} \bm{Y}^\frac{1}{2} f\left(\bm{Y}^{-\frac{1}{2}}\bm{X}\bm{Y}^{-\frac{1}{2}}\right)\bm{Y}^{\frac{1}{2}} & \mbox{ if } \bm{Y}^{-\frac{1}{2}}\bm{X}\bm{Y}^{-\frac{1}{2}} \in \mathcal{X}, \bm{Y} \succ \bm{0}, \\
    \infty & \mbox{ otherwise. }
    \end{cases}
\end{align*}
\end{definition}

\begin{remark}\blue If $\bm{X}$ and $\bm{Y}$ commute and $f$ is analytic, then Definition \ref{defn:matrixconv} simplifies into $\bm{Y} f\left(\bm{Y}^{-1}\bm{X}\right)$, which is the analog of the usual definition of the perspective function originally stated in \cite{effros2009matrix}. Definition \ref{defn:matrixconv}, however, generalizes this definition to the case where $\bm{X}$ and $\bm{Y}$ do not commute by ensuring that $\bm{Y}^{-\frac{1}{2}}\bm{X}\bm{Y}^{-\frac{1}{2}}$ is nonetheless symmetric, in a manner reminiscent of the development of interior point methods \citep[see, e.g.,][]{alizadeh1995interior}. In particular, if $\bm{Y}$ is a projection matrix such that $\bm{X}=\bm{Y}\bm{X}$–as occurs for the exact formulations of the low-rank problems we consider in this paper–then it is safe to assume that $\bm{X}, \bm{Y}$ commute. However, when $\bm{Y}$ is not a projection matrix, this cannot be assumed in general.
\end{remark}


The matrix perspective function generalizes the definition of the perspective transformation to matrix-valued functions and satisfies analogous properties:
\begin{proposition}\label{prop:genperspproperties}
Let $f$ be a matrix-valued function and $g_f$ its matrix perspective function. Then:
\begin{enumerate}[(a)]
    \item $f$ is matrix convex, {\color{black}i.e., 
\begin{align}
    t f(\bm{X})+(1-t) f(\bm{W}) \succeq f(t \bm{X}+(1-t)\bm{W}) \quad \forall \bm{X}, \bm{W} \in \mathcal{S}^n, \ t \in [0,1],
\end{align}} if and only if $g_f$ is matrix convex in $(\bm{X}, \bm{Y})$. 
    \item $g_f$ is a positive homogeneous function, i.e., for any $\mu {\blue >} 0$ we have
    \begin{align}
    g_f(\mu \bm{X}, \mu\bm{Y})=\mu g_f(\bm{X}, \bm{Y}).
    \end{align}
    \item Let $\bm{Y} \succ \bm{0}$ be a positive definite matrix. Then, letting the epigraph of $f$ be denoted by \begin{align}\mathrm{epi}(f):=\{(\bm{X}, \bm{\theta}): \bm{X} \in \mathrm{dom}(f), f(\bm{X}) \preceq \bm{\theta}\},\end{align} we have $(\bm{X}, \bm{Y}, \bm{\theta}) \in \mathrm{epi}(g_f)$ if and only if $(\bm{Y}^{-\frac{1}{2}}\bm{X}\bm{Y}^{-\frac{1}{2}}, \bm{Y}^{-\frac{1}{2}}\bm{\theta}\bm{Y}^{-\frac{1}{2}}) \in \mathrm{epi}(f)$.
\end{enumerate} 
\end{proposition}
\begin{proof}
We prove the claims successively:
\begin{enumerate}[(a)]
    \item This is precisely the main result of \citet[Theorem 2.2]{ebadian2011perspectives}.
    \item {\blue For $\mu > 0$,} 
    $g_f(\mu \bm{X}, \mu \bm{Y})=\mu \bm{Y}^{\frac{1}{2}} f\left((\mu\bm{Y})^{-\frac{1}{2}}\mu\bm{X}(\mu\bm{Y})^{-\frac{1}{2}}\right)\bm{Y}^{\frac{1}{2}}=\mu g_f(\bm{X}, \bm{Y})$. 
    \item By generalizing the main result in \citep[][Chapter 3.2.6]{boyd2004convex}, for any $\bm{Y} \succ \bm{0}$ we have that
    \begin{align*}
        (\bm{X}, \bm{Y}, \bm{\theta}) \in \mathrm{epi}(g_f) & \iff \quad \bm{Y}^\frac{1}{2}f(\bm{Y}^{-\frac{1}{2}}\bm{X}\bm{Y}^{-\frac{1}{2}})\bm{Y}^\frac{1}{2}\preceq \bm{\theta},\\
        & \iff \quad  f(\bm{Y}^{-\frac{1}{2}}\bm{X}\bm{Y}^{-\frac{1}{2}}) \preceq \bm{Y}^{-\frac{1}{2}}\bm{\theta}\bm{Y}^{-\frac{1}{2}},\\
        & \iff \quad (\bm{Y}^{-\frac{1}{2}}\bm{X}\bm{Y}^{-\frac{1}{2}}, \bm{Y}^{-\frac{1}{2}}\bm{\theta}\bm{Y}^{-\frac{1}{2}}) \in \mathrm{epi}(f). \quad \qed
    \end{align*}
\end{enumerate}
\end{proof}

{\color{black}
We now specialize our attention to matrix-valued functions defined by a scalar convex function, as suggested in the introduction.
}

{\blue 
\subsection{Matrix Perspectives of Operator Functions}
From any function $\omega : \mathbb{R} \rightarrow \mathbb{R}$, we can define its extension to the set of symmetric matrices, $f_\omega : \mathcal{S}^n \rightarrow \mathcal{S}^n$ as  
\begin{align}
    f_\omega(\bm{X}) = \bm{U} \operatorname{Diag}(\omega(\lambda_1^x),\dots, \omega(\lambda_n^x)) \bm{U}^\top,
\end{align}
where $\bm{X} = \bm{U} \operatorname{Diag}(\lambda_1^x,\dots, \lambda_n^x) \bm{U}^\top$ is an eigendecomposition of $\bm{X}$. Functions of this form are called \textit{operator functions} \citep[see][for a general theory]{bhatia2013matrix}. In particular, one can show that $f_\omega(\bm{X})$ is well-defined (does not depend explicitly on the eigenbasis of $\bm{X}$, $\bm{U}$). 
Among other examples, taking $\omega(x) = \exp(x)$ (resp. $\log(x)$) provides a matrix generalization of the exponential (resp. logarithm) function; see Appendix \ref{ssec:A.explog}.

Central to our analysis is that we can explicitly characterize the closure of the matrix perspective of $f_\omega$ under some assumptions on $\omega$, i.e., define by continuity $g_{f_\omega}(\bm{X},\bm{Y})$ for rank-deficient matrices $\bm{Y}$:
\begin{proposition}\label{prop:operatorperspective} Consider a function $\omega : \mathbb{R} \rightarrow \mathbb{R}$ satisfying Assumption \ref{ass:coercive}. Then, the closure of the matrix perspective of $f_\omega$ is, for any $\bm{X} \in \mathcal{S}^n$, $\bm{Y} \in \mathcal{S}_+^n$, 
\begin{align*}
    g_{f_\omega}(\bm{X},\bm{Y}) = \begin{cases} \bm{Y}^\frac{1}{2} f_\omega(\bm{Y}^{-\frac{1}{2}}\bm{X}\bm{Y}^{-\frac{1}{2}})\bm{Y}^{\frac{1}{2}} & \mbox{ if } \operatorname{Span}(\bm{X}) \subseteq \operatorname{Span}(\bm{Y}), \bm{Y} \succeq \bm{0}, \\
    \infty & \mbox{ otherwise, }
    \end{cases}
\end{align*}
where $\bm{Y}^{-\frac{1}{2}}$ denotes the pseudo-inverse of the square root of $\bm{Y}$.
\end{proposition}
\begin{remark} Note that in the expression of $g_{f_\omega}$ above, the matrix $\bm{Y}^{-\frac{1}{2}}\bm{X}\bm{Y}^{-\frac{1}{2}}$ is unambiguously defined if and only if $\operatorname{Span}(\bm{X}) \subseteq \operatorname{Span}(\bm{Y})$ (otherwise, its value depends on how we define the pseudo-inverse of $\bm{Y}^{\frac{1}{2}}$ outside of its range). Accordingly, in the remainder of the paper, we omit the condition $\operatorname{Span}(\bm{X}) \subseteq \operatorname{Span}(\bm{Y})$ whenever the analytic expression for $g_{f_\omega}$ explicitly involves $\bm{Y}^{-\frac{1}{2}}\bm{X}\bm{Y}^{-\frac{1}{2}}$.
\end{remark}
The proof of Proposition \ref{prop:operatorperspective} is deferred to Appendix \ref{ssec:A.proof.opepersp}. In the appendix, we also present an immediate extension where additional constraints, $\bm{X} \in \mathcal{X}$, are imposed on the argument of $f_\omega$. As in our prior work \citet{bertsimas2020mixed}, we reformulate the rank constraints in \eqref{prob:lrsdo} by introducing a projection matrix $\bm{Y}$ to encode for the span of $\bm{X}$. Naturally, $\bm{Y}$ should be rank-deficient. Hence, Proposition \ref{prop:operatorperspective} ensures that having $\operatorname{tr}(g_{f_\omega}(\bm{X},\bm{Y})) < \infty$ is a sufficient condition for $\bm{Y}$ to indeed control $\operatorname{Span}(\bm{X})$.

To gain intuition on how the matrix perspective function transforms $\bm{X}$ and $\bm{Y}$, we now provide
an interesting connection between the matrix perspective of $f_\omega$ and the perspective of $\omega$ in the case where $\bm{X}$ and  $\bm{Y}$ commute. 
\begin{proposition}\label{prop:persp.commute} Consider two matrices $\bm{X} \in \mathcal{S}^n, \bm{Y} \in \mathcal{S}^n_+$ that commute and such that $\mathrm{Span}(\bm{X}) \subseteq \mathrm{Span}(\bm{Y})$. Hence, there exists an orthogonal matrix $\bm{U}$ which jointly diagonalizes $\bm{X}$ and $\bm{Y}$. Let $\lambda_1^x, \dots, \lambda_n^x$ and  $\lambda_1^y, \dots, \lambda_n^y$ denote the eigenvalues of $\bm{X}$ and $\bm{Y}$ respectively, ordered according to this basis $\bm{U}$. Consider an operator function $f_\omega$ with $\omega$ satisfying Assumption \ref{ass:coercive}. Then, 
we have that:
\begin{align*}
    g_{f_\omega} (\bm{X}, \bm{Y}) = \bm{U} \operatorname{Diag}\left( g_\omega(\lambda_1^x, \lambda_1^y), \dots, g_\omega(\lambda_n^x, \lambda_n^y) \right) \bm{U}^\top
\end{align*}
\end{proposition}
\begin{proof} By simultaneously diagonalizing $\bm{X}$ and $\bm{Y}$, we get 
\begin{align*}
    \bm{Y}^{-\frac{1}{2}}\bm{X}\bm{Y}^{-\frac{1}{2}} &=\bm{U}\mathrm{Diag}\left( \lambda_1^x/\lambda_1^y, \dots, \lambda_n^x/\lambda_n^y \right)\bm{U}^\top, \\
    f_\omega\left( \bm{Y}^{-\frac{1}{2}}\bm{X}\bm{Y}^{-\frac{1}{2}} \right) &=\bm{U}\mathrm{Diag}\left( \omega(\lambda_1^x/\lambda_1^y), \dots, \omega(\lambda_n^x/\lambda_n^y) \right)\bm{U}^\top, \\
    \bm{Y}^{\frac{1}{2}} f_\omega\left( \bm{Y}^{-\frac{1}{2}}\bm{X}\bm{Y}^{-\frac{1}{2}} \right) \bm{Y}^{\frac{1}{2}} &=\bm{U}\mathrm{Diag}\left( \lambda_1^y \omega(\lambda_1^x/\lambda_1^y), \dots, \lambda_n^y \omega(\lambda_n^x/\lambda_n^y) \right)\bm{U}^\top. \quad \qed
\end{align*}
\end{proof}

Note that if $\bm{Y}$ is a projection matrix such that $\mathrm{Span}(\bm{X}) \subseteq \mathrm{Span}(\bm{Y})$ then we necessarily have that $\bm{X}=\bm{Y}\bm{X}=\bm{X}\bm{Y}$ and the assumptions of Proposition \ref{prop:persp.commute} hold.

In the general case where $\bm{X}$ and $\bm{Y}$ do not commute, we cannot simultaneously diagonalize them. However, we can still project $\bm{Y}$ onto the space of matrices that commute with $\bm{X}$. We show in Appendix \ref{ssec:A.persp.noncomm} that this is a trace preserving operation that can only reduce the value of $\operatorname{tr}\left( g_{f_\omega}(\bm{X}, \cdot) \right)$.
}

\subsection{The Matrix Perspective Reformulation Technique}\label{ssec:reformtech}
Definition \ref{defn:matrixconv} and Proposition \ref{prop:operatorperspective} supply the necessary language to lay out our Matrix Perspective Reformulation Technique (MPRT). Therefore, we now state the technique; details regarding its implementation will become clearer throughout the paper.

Let us revisit Problem
\eqref{prob:lrsdo}, and assume that the term $\Omega(\bm{X})$ satisfies the following properties:

\begin{assumption}
\label{assumption:seperability} 
$\Omega(\bm{X})= \operatorname{tr}\left( f_\omega(\bm{X}) \right)$, where $\omega$
is a {\color{black}function satisfying Assumption \ref{ass:coercive} and whose associated operator function, $f_\omega$, is matrix convex}.
\end{assumption}

{\blue Assumption \ref{assumption:seperability} implies that the regularizer can be rewritten as operating on the eigenvalues of $\bm{X}$, $\lambda_i(\bm{X})$, directly: $\Omega(\bm{X})= \sum_{i \in [n]} \omega(\lambda_i(\bm{X}))$.}
As we discuss in the next section, a broad class of functions satisfy this property. 
{\blue For ease of notation, we refer to $f_\omega$ as $f$ in the remainder of the paper (and accordingly denote by $g_f$ its matrix perspective function).}

After letting an orthogonal projection matrix $\bm{Y}$ model the rank of $\bm{X}$—as per \cite{bertsimas2020mixed}—Problem \eqref{prob:lrsdo} admits the equivalent mixed-projection reformulation:
\begin{align}\label{prob:lrsdo2}
    \min_{\bm{Y} \in \mathcal{Y}^k_n}\min_{\bm{X} \in \mathcal{S}^n} \quad & \langle \bm{C}, \bm{X} \rangle+\mu \cdot \mathrm{tr}(\bm{Y})+\mathrm{tr}(f(\bm{X})) \\
    \text{s.t.} \quad & \langle \bm{A}_i, \bm{X}\rangle=b_i \quad \forall i \in [m], \ \bm{X}=\bm{Y}\bm{X}, \ \bm{X} \in \mathcal{K}, \nonumber
\end{align}
where $\bm{Y} \in \mathcal{Y}^k_n$ is the set of $n \times n$ orthogonal projection matrices with trace at most $k$:
\begin{align*}
\mathcal{Y}^k_n := \left\lbrace \bm{Y} \in \mathcal{S}_+^n : \bm{Y}^2 = \bm{Y},\ \mathrm{tr}(\bm{Y}) \leq k \right\rbrace. 
\end{align*}
Note that for $k \in \mathbb{N}$, the convex hull of $\mathcal{Y}^k_n$ is given by $ \mathrm{Conv}(\mathcal{Y}^k_n)=\{\bm{Y} \in \mathcal{S}^n_{+}: \bm{Y} \preceq \mathbb{I}, \mathrm{tr}(\bm{Y}) \leq k\}$, which is a well-studied object in its own right \cite{overton1992sum,overton1993optimality, lewis1996convex,pataki1998rank}.

Since $\bm{Y}$ is an orthogonal projection matrix, imposing the nonlinear constraint $\bm{X}=\bm{Y}\bm{X}$ and introducing the term $\Omega(\bm{X})=\mathrm{tr}(f(\bm{X}))$ in the objective is equivalent to introducing the following term in the objective: $$\mathrm{tr}(g_f(\bm{X}, \bm{Y}))+(n-\mathrm{tr}(\bm{Y}))\omega(0),$$ where $g_f$ is the matrix perspective of $f$, and thus Problem \eqref{prob:lrsdo2} is equivalent to:
\begin{align}\label{prob:lrsdo3}
    \min_{\bm{Y} \in \mathcal{Y}^k_n}\min_{\bm{X} \in \mathcal{S}^n} \quad & \langle \bm{C}, \bm{X} \rangle+\mu \cdot \mathrm{tr}(\bm{Y})+\mathrm{tr}(g_{f}(\bm{X}, \bm{Y}))+(n-\mathrm{tr}(\bm{Y}))\omega(0) \\
    \text{s.t.} \quad & \langle \bm{A}_i, \bm{X}\rangle=b_i \quad \forall i \in [m], \ \bm{X} \in \mathcal{K}, \nonumber
\end{align}
Let us formally state and verify the equivalence between Problems \eqref{prob:lrsdo2}-\eqref{prob:lrsdo3} via:
\begin{theorem}\label{lemma:equivalence}
Problems \eqref{prob:lrsdo2}-\eqref{prob:lrsdo3} attain the same optimal objective value.
\end{theorem}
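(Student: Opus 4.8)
The plan is to mirror the scalar argument of Lemma~\ref{lemma:perspreformtech}, replacing the binary-variable analysis with the orthogonal-projection analysis supplied by the matrix perspective function. Since the two problems share the same feasible region (the constraint $\bm{X}=\bm{Y}\bm{X}$ does not appear explicitly in \eqref{prob:lrsdo3} but is encoded through the $+\infty$ penalty of $g_f$), it suffices to show that, for every feasible $(\bm{X},\bm{Y})$ with $\bm{Y}\in\mathcal{Y}^k_n$, the objective contributions from the regularization terms agree. Concretely, I would establish the pointwise identity
\begin{align*}
    \mathrm{tr}\big(g_f(\bm{X},\bm{Y})\big)+(n-\mathrm{tr}(\bm{Y}))\,\omega(0)
    =\mathrm{tr}(f(\bm{X}))+\begin{cases}0 & \text{if } \bm{X}=\bm{Y}\bm{X},\\ +\infty & \text{otherwise,}\end{cases}
\end{align*}
which is the exact matrix analog of the scalar equality proved in Lemma~\ref{lemma:perspreformtech}. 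This single identity shows that any feasible point of one problem maps to a feasible point of the other at equal cost, and therefore the optima coincide.

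First I would reduce the verification to eigenvalues. Because $\bm{Y}$ is an orthogonal projection matrix, its eigenvalues are all $0$ or $1$, and $\mathrm{tr}(\bm{Y})=k'\le k$ counts the number of unit eigenvalues. The key structural fact, recorded in the remark following Definition~\ref{defn:matrixconv}, is that $g_f(\bm{X},\bm{Y})<\infty$ forces $\bm{X}$ and $\bm{Y}$ to share a common eigenbasis $\bm{U}$ (and forces $\mathrm{Span}(\bm{X})\subseteq\mathrm{Span}(\bm{Y})$, i.e.\ $\bm{X}=\bm{Y}\bm{X}$). I would therefore split into the finite and infinite cases. When $g_f(\bm{X},\bm{Y})=+\infty$, the column span of $\bm{X}$ escapes that of $\bm{Y}$, which is precisely the failure of $\bm{X}=\bm{Y}\bm{X}$, so both sides equal $+\infty$. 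When $g_f(\bm{X},\bm{Y})<\infty$, I would diagonalize simultaneously and apply Proposition~\ref{prop:genperspproperties}(d), which gives
\begin{align*}
    g_f(\bm{X},\bm{Y})=\bm{U}\,\mathrm{Diag}\big(\lambda^y_1 g_\omega(\lambda^x_1/\lambda^y_1),\ldots,\lambda^y_n g_\omega(\lambda^x_n/\lambda^y_n)\big)\,\bm{U}^\top,
\end{align*}
so that $\mathrm{tr}(g_f(\bm{X},\bm{Y}))=\sum_{i=1}^n \lambda^y_i\,g_\omega(\lambda^x_i/\lambda^y_i)$. Here the analysis decouples into $n$ independent scalar instances, each of which is exactly the scalar perspective identity with binary variable $z_i=\lambda^y_i\in\{0,1\}$ and regularizer $\Omega_i=\omega$.

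The scalar reduction then closes the argument. For each index $i$ with $\lambda^y_i=1$, the scalar perspective gives $\lambda^y_i g_\omega(\lambda^x_i/\lambda^y_i)=\omega(\lambda^x_i)$, matching the $f$-term; for each $i$ with $\lambda^y_i=0$, feasibility of a finite value forces $\lambda^x_i=0$ (this is the eigenvalue-level statement of $\bm{X}=\bm{Y}\bm{X}$), and the perspective convention gives $g_\omega(0,0)=0$, while the residual term $(n-\mathrm{tr}(\bm{Y}))\omega(0)$ supplies exactly one copy of $\omega(0)=\omega(\lambda^x_i)$ for each such $i$. Summing over $i$ reproduces $\mathrm{tr}(f(\bm{X}))=\sum_i\omega(\lambda^x_i)$ on the right-hand side, completing the identity. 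The main obstacle, and the place requiring the most care, is the infinite/rank-deficient case: one must argue rigorously that $g_f(\bm{X},\bm{Y})<\infty$ is \emph{equivalent} to $\bm{X}=\bm{Y}\bm{X}$ (not merely implied by it), which relies on the span condition $\bm{X}\in\mathrm{Span}(\bm{Y})$ and the simultaneous-diagonalizability remark, and on checking that the eigenvalue bookkeeping of $(n-\mathrm{tr}(\bm{Y}))\omega(0)$ correctly accounts for every zero eigenvalue of $\bm{Y}$, including those where $\bm{X}$ also vanishes. Once this equivalence is secured, the remaining steps are the routine scalar case-split already carried out in Lemma~\ref{lemma:perspreformtech}.
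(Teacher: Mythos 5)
Your proof is correct and takes essentially the same route as the paper's: both arguments reduce to a simultaneous eigendecomposition via Proposition~\ref{prop:genperspproperties}(d), exploit the binary eigenvalues of the projection matrix $\bm{Y}$, use the span condition to identify finiteness of $g_f$ with the constraint $\bm{X}=\bm{Y}\bm{X}$, and let the term $(n-\mathrm{tr}(\bm{Y}))\,\omega(0)$ absorb the discrepancy between $\lambda_i^y\,g_\omega(0/0)=0$ and $\omega(0)$ on the zero eigenvalues of $\bm{Y}$. Your packaging of the two feasibility directions into a single pointwise identity (mirroring Lemma~\ref{lemma:perspreformtech}) is a purely presentational difference from the paper's two-bullet exchange argument.
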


\begin{proof}
It suffices to show that for any feasible solution to \eqref{prob:lrsdo2} we can construct a feasible solution to \eqref{prob:lrsdo3} with an equal or lower cost, and vice versa:
\begin{itemize}
    \item Let $(\bm{X}, \bm{Y})$ be a feasible solution to \eqref{prob:lrsdo2}. {\blue Since $\bm{X} = \bm{Y} \bm{X} \in \mathcal{S}^n$, $\bm{X}$ and $\bm{Y}$ commute. Hence, by Proposition \ref{prop:persp.commute}, we have (using the same notation as in Proposition \ref{prop:persp.commute}): }
    \begin{align*}
    \operatorname{tr}\left( g_f(\bm{X}, \bm{Y}) \right) = \sum_{i \in [n]} g_\omega\left(\lambda^x_i,\lambda^y_i \right) = \sum_{i \in [n]} 1\{\lambda_i^y>0\}\omega(\lambda^x_i),
    \end{align*} 
    where $1\{\lambda_i^y>0\}$ is an indicator function which denotes whether the $i$th eigenvalue of $\bm{Y}$ {\blue(which is either 0 or 1)} is {\color{black}strictly positive}. {\blue Moreover, since $\bm{X} = \bm{Y}\bm{X}$, $\lambda_i^y = 0 \implies \lambda_i^x = 0$ and 
    \begin{align}
      \operatorname{tr}\left(f(\bm{X})\right) &= \sum_{i\in[n]} \omega(\lambda_i^x) = \operatorname{tr}\left( g_f(\bm{X}, \bm{Y}) \right) + \sum_{i \in [n]} 1\{\lambda_i^y=0\}\omega(0) \notag \\ &= \operatorname{tr}\left( g_f(\bm{X}, \bm{Y}) \right) + (n-\mathrm{tr}(\bm{Y}))\omega(0). \label{eqn:traceq}
    \end{align}
    }
    This establishes that $(\bm{X}, \bm{Y})$ is feasible in \eqref{prob:lrsdo3} with the same cost.
    \item Let $(\bm{X}, \bm{Y})$ be a feasible solution to \eqref{prob:lrsdo3}. Then, it follows that $\bm{X} \in \mathrm{Span}(\bm{Y})$, which implies that $\bm{X}=\bm{Y}\bm{X}$ since $\bm{Y}$ is a projection matrix. Therefore, {\blue\eqref{eqn:traceq}} holds, which establishes that $(\bm{X}, \bm{Y})$ is feasible in \eqref{prob:lrsdo2} with the same cost. \quad \qed
\end{itemize}
\end{proof}

Eventually, relaxing $\bm{Y} \in \mathcal{Y}^k_n$ in Problem \eqref{prob:lrsdo3} supplies as strong—and sometimes significantly stronger—relaxations than by any other technique we are aware of, as we explore in Section \ref{sec:convhulls}. 
{\blue 
\begin{remark}\label{rem:commuting} Note that, based on the proof of Theorem \ref{lemma:equivalence}, we could replace $g_f(\bm{X}, \bm{Y})$ in \eqref{prob:lrsdo3} by any function $\Tilde{g}(\bm{X}, \bm{Y})$ such that $g_f(\bm{X}, \bm{Y}) = \Tilde{g}(\bm{X}, \bm{Y})$ for $\bm{X}, \bm{Y}$ that commute, with no impact on the objective value. However, it might impact tractability if  $\Tilde{g}(\bm{X}, \bm{Y})$ is not convex in $(\bm{X}, \bm{Y})$.
\end{remark} }
\begin{remark}
{\blue Under Assumption \ref{assumption:seperability}, the regularization term $\Omega(\bm{X})$ penalizes all eigenvalues of $f_{\omega}(\bm{X})$ equally.} The MPRT can be extended to {\blue a wider class of regularization functions that penalize the largest eigenvalues more heavily}, 
at the price of (a significant amount of) additional notation. For brevity, we lay out this extension in Appendix \ref{sec:mprtextension}. 
\end{remark}

{\color{black}
Theorem \ref{lemma:equivalence} only uses the fact that $f$ is an operator function with $\omega$ satisfying Assumption \ref{ass:coercive}, not the fact that $f$ is matrix convex. In other words, \eqref{prob:lrsdo3} is always an equivalent reformulation of \eqref{prob:lrsdo2}.
An interesting question is to identify the set of necessary conditions for the objective of \eqref{prob:lrsdo3} to be convex in $(\bm{X},\bm{Y})$--$f$ being matrix convex is clearly sufficient. 
The objective in \eqref{prob:lrsdo3} is convex only as long as $\operatorname{tr}\left( g_f \right)$ is. Interestingly, this is not equivalent to the convexity of  $\mathrm{tr}(f)$. 
See Appendix \ref{append:counterexample} for a counter-example. 
It is, however, an open question whether a weaker notion than matrix convexity could ensure the joint convexity of $\mathrm{tr}(g_f)$.
It would also be interesting to investigate the benefits and the tractability of non-convex penalties (either by having $f$ not matrix convex or $\omega$ non-convex), given the successes of non-convex penalty functions in sparse regression problems \citep{zhang2010nearly,fan2001variable}. 
}

\subsection{Convex Hulls of Low-Rank Sets and the MPRT}
We now show that, for a general class of low-rank sets, applying the MPRT is equivalent to taking the convex hull of the set. This is significant, because we are not aware of any general-purpose techniques for taking convex hulls of low-rank sets. Formally, we have the following result:
\begin{theorem}\label{thm:lowrankset} {\blue Consider an operator function $f=f_\omega$ satisfying Assumption \ref{assumption:seperability}.} Let
\begin{align}
\mathcal{T}=\left\{\bm{X} \in \mathcal{S}^n: \mathrm{tr}(f(\bm{X}))+\mu \cdot \mathrm{Rank}(\bm{X}) \leq t, \mathrm{Rank}(\bm{X}) \leq k \right\}
\end{align}
be a set where $t \in \mathbb{R},k \in \mathbb{N}$ are fixed. 
Then, {\color{black}an extended formulation of }the convex hull of $\mathcal{T}$ is given by:
\begin{align}
\mathcal{T}^c=&\left\{(\bm{X}, \bm{Y}) \in \mathcal{S}^n \times \mathrm{Conv}(\mathcal{Y}^k_n): \mathrm{tr}(g_f(\bm{X, \bm{Y}}))+\mu\cdot \mathrm{tr}(\bm{Y})+(n-\mathrm{tr}(\bm{Y}))\omega(0) \leq t \right\}.
\end{align}
Where $\mathrm{Conv}(\mathcal{Y}^k_n)=\{\bm{Y} \in \mathcal{S}^n_+: \bm{Y} \preceq \mathbb{I}, \mathrm{tr}(\bm{Y}) \leq k \}$ is the convex hull of trace-$k$ projection matrices, and $g_f$ is the matrix perspective function of $f$.
\end{theorem}

\begin{proof}
We prove the two directions sequentially:
\begin{itemize}

\item $\mathrm{Conv}\left(\mathcal{T}\right) \subseteq \mathcal{T}^c$: let $\bm{X} \in \mathcal{T}$. Then, since the rank of $\bm{X}$ is at most $k$, there exists some $\bm{Y} \in \mathcal{Y}^k_n$ such that $\bm{X}=\bm{Y}\bm{X}$ and $\mathrm{tr}(\bm{Y})=\mathrm{Rank}(\bm{X})$. Moreover, by the same argument as in the proof of Theorem \ref{lemma:equivalence}, it follows that {\blue \eqref{eqn:traceq} holds and} $\mathrm{tr}(g_f(\bm{X}, \bm{Y}))+\mu \cdot \mathrm{tr}(\bm{Y})+(n-\mathrm{tr}(\bm{Y}))\omega(0) \leq t$, which confirms that $(\bm{X}, \bm{Y}) \in \mathcal{T}^c$. Since $\mathcal{T}^c$ is a convex set, we therefore have $\mathrm{Conv}\left(\mathcal{T}\right) \subseteq \mathcal{T}^c$.

\item $\mathcal{T}^c \subseteq \mathrm{Conv}\left(\mathcal{T}\right)$: let $(\bm{X}, \bm{Y}) \in \mathcal{T}^c$. {\blue Our proof uses Proposition \ref{prop:persp.commute}, which requires $\bm{X}$ and $\bm{Y}$ to commute. Let $\mathcal{X}$ denote the set of matrices that commute with $\bm{X}$: $\mathcal{X} := \{ \bm{M} \ : \ \bm{X M} = \bm{M X} \}$. Denote $\bm{Y}_{|\mathcal{X}}$ the projection of $\bm{Y}$ onto $\mathcal{X}$.}
{\blue By Lemma \ref{lemma:traceineq.proj}, we have that $\bm{Y}_{|\mathcal{X}} \in \operatorname{Conv}(\mathcal{Y}_n^k)$, and 
$\operatorname{tr}\left( g_{f}(\bm{X}, \bm{Y}_{|\mathcal{X}}) \right) \leq  \operatorname{tr}\left( g_{f}(\bm{X}, \bm{Y}) \right) < \infty$ so $(\bm{X}, \bm{Y}_{|\mathcal{X}}) \in \mathcal{T}^c$ as well. Hence, without loss of generality, by renaming $\bm{Y} \leftarrow \bm{Y}_{|\mathcal{X}}$, we can assume that $\bm{X}$ and $\bm{Y}$ commute.}
Then, it follows from 
Proposition \ref{prop:persp.commute} that {\blue the vectors of eigenvalues of $\bm{X}$ and $\bm{Y}$ (ordered according to a shared eigenbasis $\bm{U}$),} 
$\left(\bm{\lambda}(\bm{X}), \bm{\lambda}(\bm Y)\right)$ belong to the set
\begin{align*}
\left\{(\bm{x}, \bm{y}) \in \mathbb{R}^n \times [0,1]^n: \sum_i y_i \leq k,  \sum_{i=1}^n y_i \omega\left(\tfrac{x_i}{y_i}\right)+\mu \sum_i y_i +(n-\sum_i y_i)\omega(0) \leq t \right\},
\end{align*}
which, by \citep[][Lemma 6]{gunluk2010perspective}, is the convex hull of 
\begin{align*}
\mathcal{U}^c:=\left\{(\bm{x}, \bm{y}) \in \mathbb{R}^n \times \{0,1\}^n: \sum_i y_i \leq k,  \sum_{i=1}^n \omega\left({x_i}\right)+\mu \sum_i y_i \leq t, x_i = 0 \ \text{if} \ y_i = 0 \ \forall i \in [n]\right\}.
\end{align*}
Let us decompose $(\bm{\lambda}(\bm{X}), \bm{\lambda}(\bm Y))$ into $\bm{\lambda}(\bm{X}) = \sum_{k} \alpha_k \bm{x}^{(k)}$, $\lambda(\bm{Y}) = \sum_{k} \alpha_k \bm{y}^{(k)}$, with $\alpha_k \geq 0$, $\sum_k \alpha_k = 1$, and $(\bm{x}^{(k)}, \bm{y}^{(k)}) \in \mathcal{U}^c$. By definition, $$\bm{T}^{(k)} := \bm{U} \text{Diag}(\bm{x}^{(k)}) \bm{U}^\top \in \mathcal{T}$$ and $\bm{X} = \sum_k \alpha_k \bm{T}^{(k)}$.
Therefore, we have that $\bm{X} \in \mathrm{Conv}(\mathcal{T})$, as required.\quad \qed
\end{itemize}
\end{proof}

\begin{remark}
Since linear optimization problems over convex sets admit extremal optima, Theorem \ref{thm:lowrankset} demonstrates that \textit{unconstrained} low-rank problems with spectral objectives can be recast as linear semidefinite problems, where the rank constraint is dropped without loss of optimality. This suggests that work on hidden convexity in low-rank optimization, i.e., deriving conditions under which low-rank linear optimization problems admit exact relaxations where the rank constraint is omitted \citep[see, e.g.,][]{pataki1998rank, wang2019tightness, bertsimas2020solving}, could be extended to incorporate spectral functions.
\end{remark}

\subsection{Examples of the Matrix Perspective Function}\label{sec:examplesoffunctions}
Theorem \ref{thm:lowrankset} demonstrates that, for spectral functions under low-rank constraints, taking the matrix perspective is equivalent to taking the convex hull. To highlight the utility of Theorems \ref{lemma:equivalence}-\ref{thm:lowrankset}, we therefore supply the perspective functions of some spectral regularization functions which frequently arise in the low-rank matrix literature, and summarize them in Table \ref{tab:perspfncomparison}. We also discuss how these functions and their perspectives can be efficiently optimized over. {\color{black}Note that all functions introduced in this section are either matrix convex or the trace of a matrix convex function, and thus supply valid convex relaxations when used as regularizers for the MPRT.}

\paragraph{Spectral constraint:}
{\blue Let $\omega(x) = 0$ if $|x| \leq M$, $+\infty$ otherwise. Then,}
\begin{align*}
    f(\bm{X})=\begin{cases} {\blue \bm{0}} & \text{if} \quad \Vert \bm{X}\Vert_\sigma \leq M, \\ +\infty & \text{otherwise}, \end{cases}
\end{align*} 
for $\bm{X} \in \mathcal{S}^n${\color{black}, where $\Vert \cdot \Vert_\sigma$ denotes the spectral norm, i.e., the largest eigenvalue in absolute magnitude of $\bm{X}$. Observe that the condition $\Vert \bm{X}\Vert_\sigma \leq M$ can be expressed via semidefinite constraints$ -M\mathbb{I} \preceq \bm{X}\preceq M\mathbb{I}$}. The perspective function $g_f$ {\blue can then be expressed as}
\begin{align*}
    g_f(\bm{X}, \bm{Y})
    =\begin{cases} {\blue \bm{0}} & \text{if} \quad -M \bm{Y} \preceq \bm{X} \preceq M \bm{Y}, \\ +\infty & \text{otherwise}. \end{cases}
\end{align*} 
{\blue If $\bm{X}$ and $\bm{Y}$ commute, $g_f(\bm{X},\bm{Y})$} requires that $\vert \lambda_j(\bm{X}) \vert \leq M \lambda_j(\bm{Y}) \ \forall j \in [n]$--the spectral analog of a big-$M$ constraint. This constraint can be modeled using two semidefinite cones, and thus handled by semidefinite solvers.

\paragraph{Convex quadratic:} {\blue For $\omega(x) = x^2$,} $f(\bm{X})=\bm{X}^\top \bm{X}$. Then, the perspective function $g_f$ is
\begin{align*}
    g_f(\bm{X},\bm{Y}) = \begin{cases} \bm{X}^\top \bm{Y}^
    \dag \bm{X} & \mbox{ if } 
    \bm{Y} \succeq \bm{0}, \\
    +\infty & \mbox{ otherwise.}
    \end{cases}
\end{align*}
Observe that this function's epigraph is semidefinite-representable. Indeed, by the Schur complement lemma \citep[][Equation 2.41]{boyd1994linear}, minimizing the trace of $g_f(\bm{X}, \bm{Y})$ is equivalent to solving
\begin{align*}
    \min_{\bm{\theta} \in \mathcal{S}^n, \bm{Y} \in \mathcal{S}^n, \bm{X} \in \mathcal{S}^n}\quad \mathrm{tr}(\bm{\theta}) \quad \text{s.t.} \quad \begin{pmatrix} \bm{\theta} & \bm{X}\\ \bm{X}^\top & \bm{Y}\end{pmatrix} \succeq \bm{0}.
\end{align*}

Interestingly, this perspective function allows us to rewrite the 
rank-$k$ SVD problem
\begin{align*}
    \min_{\bm{X} \in \mathbb{R}^{n \times m}} \quad \Vert \bm{X}-\bm{A}\Vert_F^2: \ \mathrm{Rank}(\bm{X})\leq k
\end{align*}
as a linear optimization problem over the set of orthogonal projection matrices, which implies that the orthogonal projection constraint can be relaxed to its convex hull without loss of optimality (since some extremal solution will be optimal for the relaxation). This is significant, because while rank-$k$ SVD is commonly thought of as a non-convex problem which ``surprisingly'' admits a closed-form solution, the MPRT shows that it actually admits an \textit{exact} convex reformulation:
\begin{align*}
    \min_{\bm{X}, \bm{Y}, \bm{\theta}} \quad \frac{1}{2}\mathrm{tr}(\bm{\theta})-\langle \bm{A}, \bm{X}\rangle+\frac{1}{2}\Vert \bm{A}\Vert_F^2 \ \text{s.t.} \ \bm{Y} \preceq \mathbb{I}, \ \mathrm{tr}(\bm{Y}) \leq k, \begin{pmatrix} \bm{\theta} & \bm{X}\\ \bm{X}^\top & \bm{Y}\end{pmatrix} \succeq \bm{0}. 
\end{align*}
{\blue Note that, in the above formulation, we extended our results for symmetric matrices to rectangular matrices $\bm{X} \in \mathbb{R}^{n \times m}$ without justification. We rigorously derive this extension for $f(\bm{X}) = \bm{X}^\top \bm{X}$ in Appendix \ref{sec:A.rectangular} and defer the study of the general case to future research. }
\paragraph{Spectral plus convex quadratic:}
Let 
\begin{align*}
    f(\bm{X})=\begin{cases} \bm{X}^\top \bm{X} & \text{if} \quad \Vert \bm{X}\Vert_\sigma \leq M,\\ +\infty & \text{otherwise}, \end{cases}
\end{align*} 
for $\bm{X} \in \mathcal{S}^n$. Then, the perspective function $g_f$ is
\begin{align*}
    g_f(\bm{X}, \bm{Y})=\begin{cases} \bm{X}^\top \bm{Y}^\dag \bm{X} & \text{if} \quad -M \bm{Y} \preceq \bm{X} \preceq M \bm{Y}, \\ +\infty & \text{otherwise}. \end{cases}
\end{align*} 
This can be interpreted as the spectral analog of combining a big-$M$ and a ridge penalty.

\paragraph{Convex quadratic over completely positive cone:} {\blue Consider the following optimization problem}
\begin{align*}
    \blue \min_{\bm{X} \in \mathcal{S}^n} \: \bm{X}^\top \bm{X} \mbox{ s.t. } \bm{X} \in \mathcal{C}^n_+, 
\end{align*}
where $\mathcal{C}^n_+=\{\bm{X}: \bm{X}=\bm{U}\bm{U}^\top, \bm{U} \in \mathbb{R}^{n \times n}_+\} \subseteq \mathcal{S}^n_+$ denotes the completely positive cone. 
Then, {\blue by denoting $f(\bm{X}) = \bm{X}^\top \bm{X}$ and $g_f$ its} perspective function 
{\blue we obtain a valid relaxation by} minimizing $\mathrm{tr}(g_f)$, {\blue which, }
by the Schur complement lemma \citep[see][Equation 2.41]{boyd1994linear}, 
can be reformulated as
\begin{align*}
    \min_{\bm{\theta} \in \mathcal{S}^n, \bm{Y} \in \mathcal{S}^n, \bm{X} \in \mathcal{S}^n}\quad \mathrm{tr}(\bm{\theta}) \quad \text{s.t.} \quad \begin{pmatrix} \bm{\theta} & \bm{X}\\ \bm{X}^\top & \bm{Y}\end{pmatrix} \in \mathcal{S}^{2n}_+, \bm{X} \in \mathcal{C}^n_+.
\end{align*}
Unfortunately, this formulation cannot be tractably optimized over, since separating over the completely positive cone is NP-hard. However, by relaxing the completely positive cone to the doubly non-negative cone—$\mathcal{S}^n_+ \cap \mathbb{R}^{n \times n}_{+}$—we obtain a tractable and near-exact relaxation. Indeed, as we shall see in our numerical experiments, combining this relaxation with a state-of-the-art heuristic supplies certifiably near-optimal solutions in both theory and practice.

{\blue Note that we could have obtained an alternative relaxation by instead considering the perspective of}
\begin{align*}
    f(\bm{X})=\begin{cases} \bm{X}^\top \bm{X} & \text{if} \quad \bm{X} \in \mathcal{C}^n_+,\\ +\infty & \text{otherwise}. \end{cases}
\end{align*} 

\begin{remark}
One can obtain a nearly identical formulation over the copositive cone \citep[c.f.][]{burer2009copositive}.
\end{remark}

\paragraph{Power:} Let\footnote{Note that $f(\bm{X})$ and its perspective are concave functions; hence we model their hypographs, not epigraphs.} $f(\bm{X})=\bm{X}^\alpha$ for $\alpha \in[0, 1]$ and $\bm{X} \in \mathcal{S}^n_+$. The matrix perspective function is\footnote{We only consider the PSD case for notational convenience. However, the symmetric case follows in much the same manner, after splitting $\bm{X}=\bm{X}_{+}-\bm{X}_{-}: \bm{X}_{+}, \bm{X}_{-} \succeq \bm{0}, \langle \bm{X}_{+}, \bm{X}_{-}\rangle=0$ and replacing $\bm{X}$ with $\bm{X}_{+}+\bm{X}_{-}$.}
\begin{align*}
    g_f(\bm{X},\bm{Y}) = \begin{cases} \bm{Y}^\frac{1-\alpha}{2}\bm{X}^\alpha\bm{Y}^\frac{1-\alpha}{2} & \mbox{ if } {\color{black}\bm{Y}^\frac{-1}{2} \bm{X}\bm{Y}^\frac{-1}{2}} \in \mathcal{S}^n_+, \bm{Y} \succeq \bm{0}, \\
    +\infty & \mbox{ otherwise.}
    \end{cases}
\end{align*}

\begin{remark}[Matrix Power Cone]
This function's epigraph, the matrix power cone, i.e.,
\begin{align*}
    \mathcal{K}^{\text{pow}, \alpha}_{\text{mat}}=\{(\bm{X}_1, \bm{X}_2, \bm{X}_3) \in \mathcal{S}^n_+\times \mathcal{S}^n_{+} \times \mathcal{S}^n: \bm{X}_2^{\frac{1-\alpha}{2}}\bm{X}_1^\alpha \bm{X}_2^{\frac{1-\alpha}{2}} \succeq 
    \bm{X}_{3,+}+\bm{X}_{3,-}\}
\end{align*}
is a closed convex cone which is semidefinite representable for any rational $\alpha$ \cite{fawzi2017lieb}. 
Consequently, it is a tractable object which successfully models the matrix power function (and its perspective) and we shall make repeated use of it when we apply the MPRT to several important low-rank problems in Section \ref{sec:examplesoffunctions}.
\end{remark}

\paragraph{Logarithm:}
Let $f(\bm{X})=-\log(\bm{X})$ be the matrix logarithm function. 
We have that
\begin{align*}
    g_f(\bm{X}, \bm{Y})=\begin{cases}
    - \bm{Y}^\frac{1}{2}\log\left(\bm{Y}^{-\frac{1}{2}}\bm{X}\bm{Y}^{-\frac{1}{2}}\right)\bm{Y}^\frac{1}{2} & \mbox{ if } \bm{X}, \bm{Y} \succ \bm{0}, \\
    +\infty & \mbox{ otherwise.}
    \end{cases}
\end{align*}
{\blue Observe that when $\bm{X}$ and $\bm{Y}$ commute, $g_f(\bm{X},\bm{Y})$ can be rewritten as $\bm{Y}(\log(\bm{Y}) - \log(\bm{X}))$, which} 
is the quantum relative entropy function \cite[see][for a general theory]{fawzi2019semidefinite}. We remark that the domain of $\log(\bm{X})$ requires that $\bm{X}$ is full-rank, which at a first glance makes the use of this function problematic for low-rank optimization. Accordingly, we consider the $\epsilon-$logarithm function, i.e., $\log_{\epsilon}(\bm{X})=\log(\bm{X} + \epsilon \mathbb{I})$ 
for $\epsilon>0$, 
as advocated by \citet{fazel2003log} in a different context. Note that background on the matrix exponential and logarithm functions can be found in Appendix \ref{sec:A.background}.

Observe that $\mathrm{tr}(\log(\bm{X}))=\log\det(\bm{X})$ while $\mathrm{tr}(g_f)=\mathrm{tr}(\bm{X}(\log(\bm{X})-\log(\bm{Y}))$. Thus, the matrix logarithm and its trace verify the concavity of the logdet function—which has numerous applications in low-rank problems \cite{fazel2003log} and interior point methods \cite{renegar2001mathematical} among others—while the perspective of the matrix logarithm provides an elementary proof of the convexity of the quantum relative entropy: a task for which perspective-free proofs are technically demanding \cite{effros2009matrix}. 

\paragraph{Von Neumann entropy:}
Let {\color{black} $f(\bm{X})=\bm{X}\log(\bm{X})$} denote the von Neumann quantum entropy of a density matrix $\bm{X}$. Then, its perspective function is $\blue g_f(\bm{X}, \bm{Y})= \bm{X}\bm{Y}^{-\frac{1}{2}}\log(\bm{Y}^{-\frac{1}{2}}\bm{X}\bm{Y}^{-\frac{1}{2}})\bm{Y}^\frac{1}{2}$. {\blue When $\bm{X}$ and $\bm{Y}$ commute, this perspective can be equivalently written as}
\begin{align*}
    g_f(\bm{X}, \bm{Y})=\begin{cases}
    \bm{X}^\frac{1}{2}\log(\bm{Y}^{-\frac{1}{2}}\bm{X}\bm{Y}^{-\frac{1}{2}})\bm{X}^\frac{1}{2} & \mbox{ if } \bm{X}, \bm{Y} \succ \bm{0}, \\
    +\infty & \mbox{ otherwise.}
    \end{cases}
\end{align*}
which is referred to as the Umegaski relative entropy or the matrix Kullback-Leibler divergence in the literature. {\blue Note that various generalizations of the relative entropy for matrices have been proposed in the quantum physics literature \citep{hiai1991proper}. However, these different definitions agree on the set of commuting matrices, hence can be used interchangeably for optimization purposes (see Remark \ref{rem:commuting}).}
\begin{remark}[Quantum relative entropy cone]
Note the epigraph of $g_f$, namely,
\begin{align*}
    \mathcal{K}^{\text{op, rel}}_{\text{mat}}=\{(\bm{X}_1, \bm{X}_2, \bm{X}_3) \in \mathcal{S}^n \times \mathcal{S}^n_{++} \times \mathcal{S}^n_{++}: \bm{X}_1 \succeq -\bm{X}_2^{\frac{1}{2}}\log(\bm{X}_2^{-\frac{1}{2}}\bm{X}_3\bm{X}_2^{-\frac{1}{2}})\bm{X}_2^{\frac{1}{2}}\},
\end{align*}
is a 
convex cone which can be approximated using semidefinite cones and optimized over using either the \verb|Matlab| package \verb|CVXQuad| (see \cite{fawzi2019semidefinite}), or optimized over directly using an interior point method for asymmetric cones \cite{karimi2019domain}\footnote{Specifically, if we are interested in quantum relative entropy problems where we minimize the trace of $\bm{X}_1$, as occurs in the context of the MPRT, we may achieve this using the domain-driven solver developed by \cite{karimi2019domain}. However, we are not aware of any IPMs which can currently optimize over the full quantum relative entropy cone.}.
Consequently, this is a tractable object which models the matrix logarithm and Von Neumann entropy (and their perspectives).
\end{remark}

Finally, Table \ref{tab:perspfncomparison} relates the matrix perspectives discussed above with their scalar analogs. 

\begin{table}[h!]
\centering\footnotesize
\caption{Analogy between perspectives of scalars and perspectives of matrix convex functions. }
\begin{tabular}{@{}l l l l l l l @{}} \toprule
 & \multicolumn{3}{c@{\hspace{0mm}}}{Perspective of function} &\multicolumn{3}{c@{\hspace{0mm}}}{Matrix perspective of function} \\
 \cmidrule(l){2-4} \cmidrule(l){5-7}  Type & $f(x): \mathbb{R} \rightarrow \mathbb{R}$ & $g_f(\bm{x}, t)$ & Ref. & $f$ & $g_f$ & Ref. \\\midrule
Quadratic & $x^2$ & $x^2/t$ & \citep{ben2001lectures} & $\bm{X}^\top \bm{X}$ & $\bm{X}^\top \bm{Y}^\dag \bm{X}$ & \citep{bertsimas2020mixed}\\
Power & $-x^\alpha: 0 < \alpha < 1$ & $-x^\alpha t^{1-\alpha}$ & \cite{boyd2004convex} & $-\bm{X}^\alpha$ & $-\bm{Y}^\frac{1-\alpha}{2}\bm{X}^\alpha \bm{Y}^\frac{1-\alpha}{2}$  & Prop. \ref{prop:operatorperspective}\\
  Log & $-\log(x)$ & $-t\log(\frac{x}{t})$ & \cite{boyd2004convex} & $-\log(\bm{X})$ & $-\bm{Y}^\frac{1}{2}\log\left(\bm{X}^{-\frac{1}{2}}\bm{Y}\bm{X}^{-\frac{1}{2}}\right)\bm{Y}^\frac{1}{2}$ & \cite{fawzi2019semidefinite}\\
 Entropy & $x\log(x)$ & $x \log(\frac{x}{t})$ & \cite{boyd2004convex} &\color{black} $\bm{X}\log(\bm{X})$ & $\bm{X}^\frac{1}{2}\log(\bm{Y}^{-\frac{1}{2}}\bm{X}\bm{Y}^{-\frac{1}{2}})\bm{X}^\frac{1}{2}$ & \cite{lieb1973proof, effros2009matrix}\\
\bottomrule
\end{tabular}
\label{tab:perspfncomparison}
\end{table}

\subsection{{\color{black}Matrix} Perspective Cuts}
We now generalize the perspective cuts of \cite{frangioni2006perspective, gunluk2010perspective} from vectors to matrices and cardinality to rank constraints. Let us reconsider the previously defined mixed-projection optimization problem:
\begin{align*}
    \min_{\bm{Y} \in \mathcal{Y}^k_n} \min_{\bm{X} \in \mathcal{S}^n_+} \quad & \langle \bm{C}, \bm{X}\rangle+\mu \cdot \mathrm{tr}(\bm{Y})+\mathrm{tr}(f(\bm{X}))\ \text{s.t.} \ \langle \bm{A}_i, \bm{X}\rangle=b_i \quad \forall i \in [m], \ \bm{X}=\bm{Y}\bm{X}, \bm{X}\in \mathcal{K},
\end{align*}
{\color{black}where similarly to \cite{frangioni2006perspective} we assume that $f(\bm{0})=\bm{0}$ to simplify the cut derivation procedure.} Letting $\bm{\theta}$ model the epigraph of {\color{black}$f$} via $\bm{\theta} \succeq f(\bm{X})$ and $\bm{S}$ be a subgradient of {\color{black}$f$} at $\bar{\bm{X}}$, we have:
\begin{align}\label{eqn:matrixperspective}
    \bm{\theta} \succeq f(\bar{\bm{X}})\bm{Y}+\bm{S}^{\top}(\bm{X}-\bar{\bm{X}}\bm{Y}),
\end{align}
which if $f(\bm{X})=\bm{X}^2$ —as discussed previously—reduces to 
\begin{align*}
    \bm{\theta}^i \succeq \bar{\bm{X}}(2\bm{X}-\bar{\bm{X}}\bm{Y}),
\end{align*}
which is precisely the analog of perspective cuts in the vector case. Note however that these cuts require semidefinite constraints to impose, which suggests they may not be as practically useful. For instance, our prior work \cite{bertsimas2020mixed}'s outer-approximation scheme for low-rank problems has a non-convex QCQOP master problem, which can only be currently solved using \verb|Gurobi|, while \verb|Gurobi| currently does not support semidefinite constraints. 

We remark however that the inner product of Equation \eqref{eqn:matrixperspective} with an arbitrary PSD matrix supplies a valid linear inequality. Two interesting cases of this observation arise when we take the inner product of the cut with either a rank-one matrix or the identity matrix.

Taking an inner product with the identity matrix supplies the inequality:
\begin{align}
    \mathrm{tr}(\bm{\theta}) \geq \langle f(\bar{\bm{X}}), \bm{Y}\rangle+\langle\bm{S}, \bm{X}-\bar{\bm{X}}\bm{Y}\rangle \quad \forall \bm{Y} \in \mathcal{Y}^k_n.
\end{align}
Moreover, by analogy to \citep[Section 3.4]{bertsimas2019unified}, if we ``project out'' the $\bm{X}$ variables by decomposing the problem into a master problem in $\bm{Y}$ and subproblems in $\bm{X}$ then this cut becomes the Generalized Benders Decomposition cuts derived in our prior work \citep[Equation {\color{black}(17)}]{bertsimas2020mixed}.

Alternatively, taking the inner product of the cut with a rank-one matrix $\bm{b}\bm{b}^\top$ gives:
\begin{align*}
    {\color{black}\bm{b}^\top \bm{\theta}\bm{b}\geq \bm{b}^\top \left(f(\bar{\bm{X}})\bm{Y}+\bm{S}^{\top}(\bm{X}-\bar{\bm{X}}\bm{Y})\right) \bm{b}.}
\end{align*}

A further improvement is actually possible: rather than requiring that the semidefinite inequality is non-negative with respect to one rank-one matrix, we can require that it is simultaneously non-negative in the directions $\bm{v}^1$ and $\bm{v}^2$. This supplies the second-order cone \citep[][Eqn. (8)]{permenter2018partial} cut:
\begin{align*}
      \begin{pmatrix}
          \bm{v}^1\\ \bm{v}^2
      \end{pmatrix}^\top \left( \bm{\theta}-f(\bar{\bm{X}})\bm{Y}-\bm{S}^{ \top}(\bm{X}-\bar{\bm{X}}\bm{Y})\right) \begin{pmatrix}
          \bm{v}^1 \\ \bm{v}^2
      \end{pmatrix}^\top\succeq \begin{pmatrix} 0 & 0\\ 0 & 0\end{pmatrix}.
\end{align*}
    
The analysis in this section suggests that applying a perspective cut decomposition scheme out-of-the-box may be impractical, but leaves the door open to {\color{black}adaptations} of the scheme which account for the projection matrix structure.

\section{Examples and Perspective Relaxations} \label{sec:convhulls}
In this section, we apply the MRPT to several important low-rank problems, in addition to the previously discussed reduced-rank regression problem (Section \ref{sec:motivexample}). We also recall Theorem \ref{thm:lowrankset} to demonstrate that applying the MPRT to spectral functions which feature in these problems actually gives the convex hull of relevant substructures.

\subsection{Matrix Completion} 
Given a sample $(A_{i,j} : (i, j) \in \mathcal{I} \subseteq [n] \times [n])$  of a
matrix $\bm{A} \in \mathcal{S}^n_+$, the matrix completion problem is to reconstruct the entire matrix, by assuming $\bm{A}$ is approximately low-rank \cite{candes2009exact}. Letting $\mu, \gamma >0$ be penalty multipliers, this problem admits the formulation:
\begin{align}
    \min_{\bm{X} \in \mathcal{S}^n_+} \quad & \sum_{(i,j) \in \mathcal{I}}(X_{i,j}-A_{i,j})^2+\frac{1}{2\gamma}\Vert \bm{X}\Vert_F^2+\mu \cdot \mathrm{Rank}(\bm{X}).
\end{align}

Applying the MPRT to the $\Vert \bm{X}\Vert_F^2=\mathrm{tr}(\bm{X}^\top \bm{X})$ term demonstrates that this problem is equivalent to the mixed-projection problem:
\begin{align*}
    \min_{\bm{X}, \bm{\theta} \in \mathcal{S}^n_+, \bm{Y} \in \mathcal{Y}^n_n} \quad & \sum_{(i,j) \in \mathcal{I}}(X_{i,j}-A_{i,j})^2+\frac{1}{2\gamma}\mathrm{tr}(\bm{\theta})+\mu \cdot \mathrm{tr}(\bm{Y})\quad \text{s.t.} \quad \begin{pmatrix} \bm{Y} & \bm{X}\\ \bm{X} & \bm{\theta} \end{pmatrix} \succeq \bm{0},
\end{align*}
and relaxing $\bm{Y} \in \mathcal{Y}^n_n$ to $\bm{Y} \in \mathrm{Conv}(\mathcal{Y}^n_n) =\{\bm{Y} \in \mathcal{S}^n: \bm{0} \preceq \bm{Y} \preceq \mathbb{I}\}$ supplies a valid relaxation. We now argue that this relaxation is often high-quality, by demonstrating that the MPRT supplies the convex envelope of $t\geq \frac{1}{2\gamma}\Vert \bm{X}\Vert_F^2+\mu \cdot\mathrm{Rank}(\bm{X})$, via the following corollary to Theorem \ref{thm:lowrankset}:
\begin{corollary}\label{lemma:initlemma}
\begin{align*}
\text{Let} \quad \mathcal{S}=\left\{(\bm{Y}, \bm{X}, \bm{\theta}) \in \mathcal{Y}^k_n \times \mathcal{S}^n_+ \times \mathcal{S}^n: \bm{\theta} \succeq \bm{X}^\top \bm{X}, u \bm{Y} \succeq \bm{X}\succeq \ell \bm{Y} \right\}
\end{align*}
be a set where $\ell, u \in \mathbb{R}_+$. Then, this set's convex hull is given by:
\begin{align*}
\mathcal{S}^c=\left\{(\bm{Y}, \bm{X}, \bm{\theta}) \in \mathcal{S}^n_+ \times \mathcal{S}^n_+ \times \mathcal{S}^n:\bm{Y} \preceq \mathbb{I}, \mathrm{tr}(\bm{Y}) \leq k, u \bm{Y} \succeq \bm{X}\succeq \ell \bm{Y}, \begin{pmatrix} \bm{Y} & \bm{X} \\ \bm{X}^\top & \bm{\theta}\end{pmatrix}\succeq \bm{0} \right\}.
\end{align*}
\end{corollary}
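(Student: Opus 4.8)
The plan is to prove the two set inclusions separately, mirroring the structure of the proof of Theorem \ref{thm:lowrankset}, of which this corollary is a special case with $f(\bm{X})=\bm{X}^\top \bm{X}$ (so $\omega(x)=x^2$, $\omega(0)=0$) together with the additional spectral bound constraint $\ell \bm{Y} \preceq \bm{X} \preceq u \bm{Y}$. Observe that for the convex quadratic the matrix perspective is $g_f(\bm{X},\bm{Y})=\bm{X}^\top \bm{Y}^\dag \bm{X}$, and by the Schur complement lemma the epigraph constraint $\bm{\theta} \succeq g_f(\bm{X},\bm{Y})$ is exactly the block constraint $\left(\begin{smallmatrix} \bm{Y} & \bm{X}\\ \bm{X}^\top & \bm{\theta}\end{smallmatrix}\right) \succeq \bm{0}$ (on the span of $\bm{Y}$; the Schur complement correctly encodes $\bm{X}\in\mathrm{Span}(\bm{Y})$ via the positive semidefiniteness). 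This reduces the problem to showing the two directions below.

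First I would show $\mathrm{Conv}(\mathcal{S}) \subseteq \mathcal{S}^c$. Since $\mathcal{S}^c$ is defined by convex (semidefinite and linear matrix) constraints, it is a convex set, so it suffices to show $\mathcal{S} \subseteq \mathcal{S}^c$, i.e. that every extreme point satisfies the relaxed constraints. Take $(\bm{Y},\bm{X},\bm{\theta}) \in \mathcal{S}$. The constraints $\bm{Y}\preceq \mathbb{I}$, $\mathrm{tr}(\bm{Y})\leq k$, and the spectral bounds $u\bm{Y}\succeq \bm{X}\succeq \ell\bm{Y}$ transfer verbatim since $\mathcal{Y}^k_n \subseteq \mathrm{Conv}(\mathcal{Y}^k_n)$. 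The only point needing care is the block PSD constraint: since $\bm{Y}$ is an orthogonal projection, $\bm{Y}=\bm{Y}^\dag$, and the bound $\bm{X}\preceq u\bm{Y}$ forces $\bm{X}=\bm{Y}\bm{X}=\bm{X}\bm{Y}$ (the columns of $\bm{X}$ lie in the range of $\bm{Y}$); hence $\bm{X}^\top \bm{Y}^\dag \bm{X}=\bm{X}^\top \bm{X}\preceq \bm{\theta}$, and the Schur complement lemma gives the block constraint. Thus $(\bm{Y},\bm{X},\bm{\theta})\in \mathcal{S}^c$.

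Next I would show the reverse inclusion $\mathcal{S}^c \subseteq \mathrm{Conv}(\mathcal{S})$, which is the harder direction. Given $(\bm{Y},\bm{X},\bm{\theta}) \in \mathcal{S}^c$, the block PSD constraint together with $\bm{Y}\preceq\mathbb{I}$ implies $\bm{X}\in\mathrm{Span}(\bm{Y})$, so $\bm{X}$ and $\bm{Y}$ commute and share a common eigenbasis $\bm{U}$; the spectral bounds then read as $\ell\lambda_i(\bm{Y})\leq \lambda_i(\bm{X})\leq u\lambda_i(\bm{Y})$ eigenvalue-wise, and the epigraph constraint, via Proposition \ref{prop:genperspproperties}(d), becomes $\theta_i \geq \lambda_i(\bm{X})^2/\lambda_i(\bm{Y})$ in the shared basis. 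The plan is to reduce to the scalar convex-hull result of \citet[Lemma 6]{gunluk2010perspective} applied to the eigenvalue triples $(\lambda_i(\bm{X}),\lambda_i(\bm{Y}),\theta_i)$, obtaining a decomposition $\big(\bm{\lambda}(\bm{X}),\bm{\lambda}(\bm{Y}),\bm{\lambda}(\bm{\theta})\big)=\sum_j \alpha_j (\bm{x}^{(j)},\bm{y}^{(j)},\bm{t}^{(j)})$ into points of the disaggregated scalar feasible set with binary $\bm{y}^{(j)}$ and $x^{(j)}_i=0$ whenever $y^{(j)}_i=0$. Rotating each scalar point back through the common eigenbasis $\bm{U}$ yields matrices $\bm{T}^{(j)}:=(\bm{U}\mathrm{Diag}(\bm{y}^{(j)})\bm{U}^\top,\ \bm{U}\mathrm{Diag}(\bm{x}^{(j)})\bm{U}^\top,\ \bm{U}\mathrm{Diag}(\bm{t}^{(j)})\bm{U}^\top)$, each a member of $\mathcal{S}$, whose convex combination recovers $(\bm{Y},\bm{X},\bm{\theta})$.

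The main obstacle I anticipate is the careful handling of the spectral inequalities under the shared eigenbasis and ensuring the scalar bounds $\ell y_i \leq x_i \leq u y_i$ are preserved by the Gunluk--Linderoth decomposition so that each reconstituted $\bm{T}^{(j)}$ genuinely satisfies $\ell\bm{Y}^{(j)}\preceq\bm{X}^{(j)}\preceq u\bm{Y}^{(j)}$ and hence lies in $\mathcal{S}$; in particular one must verify that \citet[Lemma 6]{gunluk2010perspective} (or a suitably augmented version incorporating the box-type bounds $\ell,u$) can be applied coordinatewise with these extra linear constraints, and that the degenerate eigenvalues where $\lambda_i(\bm{Y})=0$ force $\lambda_i(\bm{X})=0$ are treated consistently with the perspective convention $0\cdot\omega(0/0)=0$. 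Closing this gap relies on the fact that the spectral bounds are themselves positively homogeneous and separable across the shared eigenbasis, so they descend to exactly the linear coordinatewise constraints that the scalar lemma accommodates.
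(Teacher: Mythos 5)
Your first inclusion is sound, but there is a genuine gap in the hard direction $\mathcal{S}^c \subseteq \mathrm{Conv}(\mathcal{S})$: the assertion that the block PSD constraint together with $\bm{Y}\preceq\mathbb{I}$ implies ``$\bm{X}$ and $\bm{Y}$ commute and share a common eigenbasis'' is false. By the generalized Schur complement (Lemma \ref{generalizedschur}), the block constraint yields only the range condition $(\mathbb{I}-\bm{Y}\bm{Y}^\dag)\bm{X}=\bm{0}$ and $\bm{\theta}\succeq\bm{X}^\top\bm{Y}^\dag\bm{X}$, and range containment does not imply commutation. Concretely, take $n=2$, $k=1$, $\ell=0$, $u=1$, $\bm{Y}=\mathrm{Diag}(3/4,\,1/4)$, $w=(1,1)^\top/\sqrt{2}$, $\bm{X}=\tfrac{3}{8}ww^\top$, and $\bm{\theta}=\bm{X}\bm{Y}^{-1}\bm{X}=\tfrac{3}{8}ww^\top$: one checks $\bm{Y}-\bm{X}=\tfrac{1}{16}\left(\begin{smallmatrix}9 & -3\\ -3 & 1\end{smallmatrix}\right)\succeq\bm{0}$, so $(\bm{Y},\bm{X},\bm{\theta})\in\mathcal{S}^c$, yet $\bm{X}\bm{Y}\neq\bm{Y}\bm{X}$. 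Your reduction to eigenvalue triples and to \citep[Lemma 6]{gunluk2010perspective} therefore never reaches such points; as written, your argument establishes only that the \emph{commuting slice} of $\mathcal{S}^c$ lies in $\mathrm{Conv}(\mathcal{S})$. The obstacle you anticipated (preserving the bounds $\ell y_i\leq x_i\leq u y_i$ through the scalar decomposition) is the benign part; the real difficulty is upstream, before any scalar reduction is available.

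The discrepancy with the paper is instructive: in Theorem \ref{thm:lowrankset} the relaxed set $\mathcal{T}^c$ is defined through the matrix perspective $g_f$, whose domain (Definition \ref{defn:matrixconv} and the remark following it) requires $\bm{Y}^\dag\bm{X}$ to be symmetric, so commutation is built into the set and the eigenbasis reduction is legitimate there. The corollary's $\mathcal{S}^c$ is instead the honest SDP set, which is strictly larger in non-commuting directions (the matrix fractional function $\bm{X}^\top\bm{Y}^\dag\bm{X}$ is convex under the range condition alone), so the corollary is not a verbatim specialization of the theorem, and the missing step is precisely to show that non-commuting points of $\mathcal{S}^c$ still decompose into members of $\mathcal{S}$ — whose components necessarily carry \emph{different} eigenbases. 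This is possible: in the example above, with $u'=(3,-1)^\top/\sqrt{10}$ one has $\bm{Y}=\tfrac{3}{8}ww^\top+\tfrac{5}{8}u'u'^\top$, $\bm{X}=\tfrac{3}{8}\cdot 1\cdot ww^\top+\tfrac{5}{8}\cdot 0\cdot u'u'^\top$, $\bm{\theta}=\tfrac{3}{8}\,ww^\top+\tfrac{5}{8}\,\bm{0}$, a convex combination of two points of $\mathcal{S}$. Closing the gap in general requires an argument of this kind — a simultaneous rank-one decomposition or an extreme-point/support-function argument showing linear optimization over $\mathcal{S}^c$ is attained on $\mathcal{S}$ — rather than the coordinatewise scalar-hull route. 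A secondary, more minor adaptation you should also flag: the theorem's epigraph is the scalar $t\geq\mathrm{tr}(f(\bm{X}))+\mu\,\mathrm{Rank}(\bm{X})$, whereas the corollary carries a matrix epigraph $\bm{\theta}$, so even on the commuting slice you need a per-coordinate two-term disjunctive-hull version of the cited lemma rather than the lemma itself.
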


\subsection{Tensor Completion}
A central problem in machine learning is to reconstruct a $d$-tensor $\xt$ given a subsample of its entries $(A_{i_1, \ldots i_d}: (i_1, \ldots i_d) \in \mathcal{I} \subseteq [n_1] \times [n_2] \times \ldots \times [n_d])$, by assuming that the tensor is low-rank. Since even evaluating the rank of a tensor is NP-hard \cite{kolda2009tensor}, a popular approach for solving this problem is to minimize the reconstruction error while constraining the ranks of different unfoldings of the tensor \citep[see, e.g.,][]{gandy2011tensor}. After imposing Frobenius norm regularization and letting $\Vert \cdot \Vert_{HS}=\sqrt{\sum_{i_{1}=1}^{n_{1}} \ldots \sum_{{i_d}=1}^{n_d} X_{i_1, \ldots, i_d}^2}$ denote the (second-order cone representable) Hilbert-Schmidt norm of a tensor, this leads to optimization problems of the form:
\begin{align}\label{prob:modenunfold_tensor}
    \min_{\mathscr{X} \in \mathbb{R}^{n_1 \times \ldots \times n_d}} \quad {\color{black}\sum_{({i_1, \ldots i_d}) \in \mathcal{I}}\left(\mathscr{A}_{i_1, \ldots i_d}-\mathscr{X}_{i_1, \ldots i_d}\right)^2}+\sum_{i=1}^n \Vert \mathscr{X}_{(i)}\Vert_F^2 \quad \text{s.t.}\quad \mathrm{Rank}(\mathscr{X}_{(i)}) \leq k \quad \forall i \in [n].
\end{align}
Similarly to low-rank matrix completion, it is tempting to apply the MRPT to model the $\bm{X}_{(i)}^\top \bm{X}_{(i)}$ term for each mode-$n$ unfolding. We now demonstrate this supplies {\color{black}a tight approximation of }the convex hull of the sum of the regularizers, via the following lemma (proof omitted, follows in the spirit of \citep[Lemma 4]{gunluk2010perspective}):
\begin{lemma}\label{lemma:sumofsquaresoflowrankmatrices}
\begin{align*}
\text{Let} \quad \mathcal{Q}=\left\{(\rho, \bm{Y}_1, \ldots, \bm{Y}_m, \bm{X}_1, \ldots, \bm{X}_m, \bm{\theta}_1, \ldots, \bm{\theta}_m): \rho \geq \sum_{i=1}^m q_i \mathrm{tr}(\bm{\theta}_i), (\bm{X}_i, \bm{Y}_i, \bm{\theta}_i) \in \mathcal{S}^i \ \forall i \in [m] \right\}    
\end{align*}
be a set where $l_i, u_i, q_i \in \mathbb{R}^n_+ \ \forall i \in [m]$, and $\mathcal{S}_i$ is a set of the same form as $\mathcal{S}$, but $l,u$ are replaced by $l_i, u_i$. Then, {\color{black}an extended formulation of }this set's convex hull is given by:
\begin{align*}
\mathcal{Q}^c=\left\{(\rho, \bm{Y}_1, \ldots, \bm{Y}_m, \bm{X}_1, \ldots, \bm{X}_m, \bm{\theta}_1, \ldots, \bm{\theta}_m): \rho \geq \sum_{i=1}^m q_i \mathrm{tr}(\bm{\theta}_i), (\bm{X}_i, \bm{Y}_i, \bm{\theta}_i) \in \mathcal{S}^c_i \ \forall i \in [m] \right\}.
\end{align*}
\end{lemma}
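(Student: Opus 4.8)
The plan is to prove the two inclusions $\mathrm{Conv}(\mathcal{Q}) \subseteq \mathcal{Q}^c$ and $\mathcal{Q}^c \subseteq \mathrm{Conv}(\mathcal{Q})$ separately, leaning on Corollary~\ref{lemma:initlemma} to supply $\mathcal{S}_i^c = \mathrm{Conv}(\mathcal{S}_i)$ for each block. The first inclusion is immediate: $\mathcal{Q}^c$ is convex, being the intersection of the linear (hence convex) constraint $\rho \geq \sum_{i=1}^m q_i \mathrm{tr}(\bm{\theta}_i)$ with the product of the convex sets $\mathcal{S}_i^c$ over disjoint blocks of variables; and since $\mathcal{S}_i \subseteq \mathcal{S}_i^c$ while the coupling constraint is identical in $\mathcal{Q}$ and $\mathcal{Q}^c$, we have $\mathcal{Q} \subseteq \mathcal{Q}^c$, whence $\mathrm{Conv}(\mathcal{Q}) \subseteq \mathcal{Q}^c$.

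The content is the reverse inclusion. Given a point $(\rho, \{\bm{Y}_i, \bm{X}_i, \bm{\theta}_i\}_{i \in [m]}) \in \mathcal{Q}^c$, Corollary~\ref{lemma:initlemma} lets me write each block as a finite convex combination $(\bm{X}_i, \bm{Y}_i, \bm{\theta}_i) = \sum_{k=1}^{N_i} \alpha^i_k\, s_i^{(k)}$ with $s_i^{(k)} := (\bm{X}_i^{(k)}, \bm{Y}_i^{(k)}, \bm{\theta}_i^{(k)}) \in \mathcal{S}_i$, $\alpha^i_k \geq 0$ and $\sum_k \alpha^i_k = 1$ (Carath\'{e}odory guarantees finiteness). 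Because the blocks occupy disjoint coordinates and are coupled only through the additive, nonnegatively-weighted term $\sum_i q_i \mathrm{tr}(\bm{\theta}_i)$, I would merge the per-block decompositions into a single convex combination indexed by the product set, assigning to each multi-index $\vec{k} = (k_1, \ldots, k_m)$ the weight $\gamma_{\vec{k}} = \prod_{i} \alpha^i_{k_i}$ and the point whose $i$th block is $s_i^{(k_i)}$ and whose $\rho$-value is $\rho^{(\vec{k})} = \sum_i q_i \mathrm{tr}(\bm{\theta}_i^{(k_i)})$. Each such point lies in $\mathcal{Q}$, the weights are nonnegative and sum to one, and marginalizing over $\vec{k}$ reproduces exactly $\bm{X}_i, \bm{Y}_i, \bm{\theta}_i$ in each block and $\sum_i q_i \mathrm{tr}(\bm{\theta}_i)$ in the $\rho$-coordinate.

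This shows $(\rho_0, \{\bm{Y}_i, \bm{X}_i, \bm{\theta}_i\}) \in \mathrm{Conv}(\mathcal{Q})$ for the minimal value $\rho_0 = \sum_i q_i \mathrm{tr}(\bm{\theta}_i)$; to reach the given $\rho \geq \rho_0$ I would distribute the slack $\rho - \rho_0$ across the product-index points by replacing $\rho^{(\vec{k})}$ with $\rho^{(\vec{k})} + \delta_{\vec{k}}$ for any $\delta_{\vec{k}} \geq 0$ satisfying $\sum_{\vec{k}} \gamma_{\vec{k}} \delta_{\vec{k}} = \rho - \rho_0$, each perturbed point remaining in $\mathcal{Q}$ since the constraint on $\rho$ is one-sided. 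The main obstacle — and the step where the structure of the lemma really matters — is this merging of the independent, differently-weighted block decompositions into one joint convex combination; the tensor-product-of-weights construction resolves it precisely because the only coupling is an additively separable function of the $\bm{\theta}_i$, so the recession direction in each $\bm{\theta}_i$ (increasing it only relaxes $\bm{\theta}_i \succeq \bm{X}_i^\top \bm{X}_i$ and raises the right-hand side) can be absorbed in the same way as the $\rho$-slack. This is exactly the mechanism behind \citep[Lemma 4]{gunluk2010perspective}, now carried out block-wise over the semidefinite sets $\mathcal{S}_i$.
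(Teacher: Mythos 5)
Your proof is correct and is essentially the argument the paper intends: the paper itself omits the proof, remarking only that it ``follows in the spirit of \citep[Lemma 4]{gunluk2010perspective},'' and your block-wise decomposition via Corollary~\ref{lemma:initlemma}, merged through tensor-product weights $\gamma_{\vec{k}}=\prod_i \alpha^i_{k_i}$ with the slack $\rho-\rho_0$ distributed across the combination, is precisely that mechanism carried out over the sets $\mathcal{S}_i$. Only your closing parenthetical about absorbing recession directions in $\bm{\theta}_i$ is slightly off---since $q_i \geq 0$, increasing $\bm{\theta}_i$ \emph{tightens} rather than relaxes the coupling constraint $\rho \geq \sum_i q_i \mathrm{tr}(\bm{\theta}_i)$---but nothing in your actual argument relies on that aside.
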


Lemma \ref{lemma:sumofsquaresoflowrankmatrices} suggests that the MPRT may improve algorithms which aim to recover tensors of low slice rank. For instance, in low-rank tensor problems where \eqref{prob:modenunfold_tensor} admits multiple local solutions, solving the convex relaxation coming from $\mathcal{Q}^c$ 
and greedily rounding may give a high-quality initial point for an alternating minimization method such as the method of \cite{farias2019learning}, and indeed allow such a strategy to return better solutions than if it were initialized at a random point. 

{\color{black}Note however that Lemma \ref{lemma:sumofsquaresoflowrankmatrices} does not necessarily give the convex hull of the sum of the regularizers, since the regularization terms involve different slices of the same tensor and thus interact; see also \cite{romera2013new} for a related proof that the tensor trace norm does not give the convex envelope of the sum of ranks of slices}.

\subsection{Low-Rank Factor Analysis}
An important problem in statistics, psychometrics and economics is to decompose a covariance matrix $\bm{\Sigma} \in \mathcal{S}^n_+$ into a low-rank matrix $\bm{X} \in \mathcal{S}^n_+$ plus a diagonal matrix $\bm{\Phi} \in \mathcal{S}^n_+$, as explored by \citet{bertsimas2017certifiably} and references therein. This corresponds to {\color{black}solving}:
\begin{align}
    \min_{\bm{X}, \bm{\Phi} \in \mathcal{S}^n_+} \ \Vert \bm{\Sigma}-\bm{\Phi}-\bm{X}\Vert_q^q \ \text{s.t.}\ \mathrm{Rank}(\bm{X}) \leq k, \ \Phi_{i,j}=0, \forall i, j \in [n]: i \neq j, \ \Vert \bm{X}\Vert_\sigma \leq M
\end{align}
where $q\geq 1$, $\Vert \bm{X}
\Vert_q=
\left(\sum_{i=1}^n \lambda_i(\bm{X})^q\right)^\frac{1}{q}$ denotes the matrix $q$-norm, and we constrain the spectral norm of $\bm{X}$ via a big-$M$ constraint for the sake of tractability.

This problem's objective involves minimizing $\mathrm{tr}\left(\bm{\Sigma}-\bm{\Phi}-\bm{X}\right)^q$, and it is not immediately obvious how to either apply the technique in the presence of the $\bm{\Phi}$ variables or alternatively seperate out the $\bm{\Phi}$ term and apply the MPRT to an appropriate ($\bm{\Phi}$-free) substructure. To proceed, let us therefore first consider its scalar analog, obtaining the convex closure of the following set:
\begin{align*}
    \mathcal{T}=\{(x, y, z, t) \in \mathbb{R} \times \mathbb{R} \times \{0, 1\} \times \mathbb{R}^+: t \geq \vert x+y-d \vert^q, \vert x\vert \leq M, x=0 \ \mbox{if} \ z=0\},
\end{align*}
where $d \in \mathbb{R}$ and $q \geq 1$ are fixed constants, and we require that $\vert x\vert \leq M$ for the sake of tractability. We obtain the convex closure via the following proposition (proof deferred to Appendix \ref{append:proofs}):
\begin{proposition}\label{prop:powerconecl}
The convex closure of the set $\mathcal{T}$, $\mathcal{T}^c$, is given by:
\begin{align*}
    \mathcal{T}^c=\bigg\{(x, y, z, t) \in \mathbb{R} \times \mathbb{R} \times [0, 1] \times \mathbb{R}^+: \exists \beta \geq 0: \ t \geq \frac{\vert y-\beta-d(1-z)\vert^q}{(1-z)^{q-1}}+\frac{\vert x+\beta-d z \vert^q}{z^{q-1}}, \ \vert x \vert \leq Mz\bigg\}.
\end{align*}
\end{proposition}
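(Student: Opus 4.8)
The plan is to treat $\mathcal{T}$ as a disjunction over the two values of $z$ and to compute its convex closure by disjunctive programming, in the spirit of \citet{ceria1999convex} and \citet[Theorem 9.8]{rockafellar1970convex}. Writing $\mathcal{T}=C_0\cup C_1$, the slice $C_0=\{(0,y,0,t):t\ge |y-d|^q\}$ (where $x=0$ is forced) and the slice $C_1=\{(x,y,1,t):t\ge |x+y-d|^q,\ |x|\le M\}$ are both closed and convex, since $|\cdot|^q$ is convex for $q\ge 1$. Hence $\overline{\mathrm{conv}}(\mathcal{T})$ is the closed convex hull of $C_0\cup C_1$, and the $z$-coordinate of any convex combination of a point of $C_0$ with a point of $C_1$ equals the weight carried by $C_1$.

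Next I would parametrize a generic convex combination: placing weight $1-z$ on $(0,y_0,0,t_0)\in C_0$ and weight $z$ on $(x_1,y_1,1,t_1)\in C_1$ gives $x=zx_1$, $y=(1-z)y_0+zy_1$, and $t=(1-z)t_0+zt_1$, so that $|x_1|\le M$ becomes $|x|\le Mz$ and minimality forces $t_0=|y_0-d|^q$, $t_1=|x_1+y_1-d|^q$. Introducing a scalar $\beta$ to record the share of $y$ carried by the $C_1$ component, via $\beta=zy_1$ and $y-\beta=(1-z)y_0$, and substituting $y_0=(y-\beta)/(1-z)$, $x_1=x/z$, $y_1=\beta/z$, the two weighted epigraph terms collapse to the perspective expressions
\begin{align*}
(1-z)\,|y_0-d|^q=\frac{|y-\beta-d(1-z)|^q}{(1-z)^{q-1}},\qquad z\,|x_1+y_1-d|^q=\frac{|x+\beta-dz|^q}{z^{q-1}}.
\end{align*}
Each is jointly convex in its arguments, being the perspective of the convex map $|\cdot|^q$ (the scalar instance of Proposition \ref{prop:genperspproperties}(a)); summing them and taking the existential over the splitting variable yields exactly the lower bound on $t$ appearing in $\mathcal{T}^c$.

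To close the argument I would verify the two inclusions. That $\mathcal{T}^c$ is convex follows because it is the image, under the linear projection forgetting $\beta$, of the convex set cut out by the two perspective-epigraph constraints together with the linear constraints $|x|\le Mz$ and $0\le z\le 1$. For $\mathcal{T}\subseteq\mathcal{T}^c$ I would exhibit an admissible $\beta$ for each generator of $\mathcal{T}$, using the perspective convention that $a^q/s^{q-1}$ equals $0$ when $a=s=0$ and $+\infty$ when $s=0\neq a$: at $z=0$ the vanishing-denominator term forces the $C_1$-share of $y$ to zero, and at $z=1$ it forces $y-\beta=0$. Conversely, for $\mathcal{T}^c\subseteq\overline{\mathrm{conv}}(\mathcal{T})$, from any feasible $(x,y,z,t,\beta)$ with $0<z<1$ I would read off the two source points $(0,y_0,0,t_0)$ and $(x_1,y_1,1,t_1)$ from the substitutions above and check that they recombine to $(x,y,z,t)$ at cost at most $t$.

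The main obstacle is the boundary analysis at $z\in\{0,1\}$, where one denominator vanishes and the hull is attained only in the limit; here I would argue by closedness of the perspective epigraph (Proposition \ref{prop:genperspproperties}) and by passing sequences $z\to 0^+$ and $z\to 1^-$, treating the cases $q=1$ (no degeneracy, both denominators equalling $1$) and $q>1$ (finiteness forcing the numerators to vanish at the boundary) separately. A second and more delicate point is pinning down the admissible range of the splitting variable $\beta$: the construction realizes $\beta=zy_1$ with $y_1$ unconstrained, so that the inner minimization over $\beta$ must reproduce the convex-hull value, and one must verify that the range imposed in the statement of $\mathcal{T}^c$ is precisely the one for which $\min_\beta[\cdots]$ equals the support-function value of $\overline{\mathrm{conv}}(\mathcal{T})$. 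This parallels \citep[Lemma 6]{gunluk2010perspective}, which handles the analogous vector computation.
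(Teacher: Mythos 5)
Your proposal follows essentially the same route as the paper's own proof: decompose $\mathcal{T}$ into the two convex slices at $z=0$ and $z=1$, parametrize convex combinations with weights $1-z$ and $z$, eliminate the auxiliary variables by the same substitutions $x_2 = x/z$, $y_1 = (y - z y_2)/(1-z)$, and introduce $\beta = z y_2$ as the surviving variable, which produces exactly the two perspective terms defining $\mathcal{T}^c$. Your added care about the boundary cases $z\in\{0,1\}$ and about the admissible range of $\beta$ supplies rigor the paper elides (its proof in fact treats $\beta$ as a \emph{free} variable while the statement restricts to $\beta\ge 0$), but this does not constitute a different argument.
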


\begin{remark}
To check that this set is indeed a valid convex relaxation, observe that if $z=0$ then $x=0$ and $x=-\beta \implies \beta=0$ and $t \geq \vert y-d\vert^q$, while if $z=1$ then $y=\beta$ and $t \geq \vert x+y-d\vert^q$.
\end{remark}

Observe that $\mathcal{T}^c$ can be modeled using two power cones and one inequality constraint. 

Proposition \ref{prop:powerconecl} suggests that we can obtain high-quality convex relaxations for low-rank factor analysis problems via a judicious use of the matrix power cone. Namely, introduce an epigraph matrix $\bm{\theta}$ to model the eigenvalues of $(\bm{\Sigma}-\bm{\Phi}-\bm{X})^q$ and an orthogonal projection matrix $\bm{Y}_2$ to model the span of $\bm{X}$. This then leads to the following matrix power cone representable relaxation:
\begin{align*}
   \min_{\bm{X}, \bm{\Phi}, \bm{\theta}, \bm{Y}_1, \bm{Y}_2 \in \mathcal{S}^n_+, \bm{\beta} \in \mathcal{S}^n} \quad & \mathrm{tr}(\bm{\theta})\\
  \text{s.t.} \quad & \bm{\theta} \succeq \bm{Y}_1^{\frac{1-q}{2}}(\bm{Y}_1^\frac{1}{2}\bm{\Sigma}\bm{Y}_1^\frac{1}{2}-\bm{\beta}-\bm{\Phi})\bm{Y}_1^{\frac{1-q}{2}}+\bm{Y}_2^{\frac{1-q}{2}}(\bm{Y}_2^\frac{1}{2}\bm{\Sigma}\bm{Y}_2^\frac{1}{2}+\bm{\beta}-\bm{X})\bm{Y}_2^{\frac{1-q}{2}},\nonumber\\
   & \bm{Y}_1+\bm{Y}_2=\mathbb{I}, \mathrm{tr}(\bm{Y}) \leq k, \Phi_{i,j}=0, \forall i, j \in [n]: i \neq j, \nonumber\\
   & \bm{\Phi} \preceq \bm{X}, \bm{X}\preceq M\bm{Y}_2, -\bm{X}\preceq M\bm{Y}_2. \nonumber
\end{align*}

\subsection{Optimal Experimental Design}


Letting $\bm{A} \in \mathbb{R}^{n \times m}$ where $m \geq n$ be a matrix of linear measurements of the form $y_i=\bm{a}_i^\top \bm{\beta}+\epsilon_i$ from an experimental setting, the D-optimal experimental design problem (a.k.a. the sensor selection problem) is to pick $k \leq m$ of these experiments in order to make the most accurate estimate of $\bm{\beta}$ possible, by solving \citep[see][for a modern approach]{joshi2008sensor, singh2018approximation}:
\begin{align}\label{eqn:doptdesign1}
    \max_{\bm{z} \in \{0, 1\}^n: \bm{e}^\top \bm{z} \leq k} \quad & \log \det_{\epsilon}\left(\sum_{i \in [n]}z_i \bm{a}_i \bm{a}_i^\top \right),
\end{align}
where we define $\log \det_{\epsilon}(\bm{X})= \blue \log\det(\bm{X}+\epsilon \mathbb{I})$ for $\epsilon>0$ to be the pseudo log-determinant of a rank-deficient PSD matrix, which can be thought of as imposing an uninformative prior of importance $\epsilon$ on the experimental design process. Since $\log\det(\bm{X})=\mathrm{tr}(\log(\bm{X}))$, 
a valid convex relaxation is given by:
\begin{align*}
    \max_{\bm{z} \in [0, 1]^n, \bm{\theta} \in \mathcal{S}^n_+}\quad \mathrm{tr}(\bm{\theta}) \quad \text{s.t.} \quad \log\left(\bm{A}\mathrm{Diag}(\bm{z})\bm{A}^\top+\epsilon\mathbb{I}\right)\succeq \bm{\theta},
\end{align*}
which can be modeled using the quantum relative entropy cone, via $(-\bm{\theta}, \mathbb{I}, \bm{A}\mathrm{Diag}(\bm{z})\bm{A}^\top+\epsilon\mathbb{I}) \in \mathcal{K}^{\text{rel, op}}_{\text{mat}}$. This is equivalent to perhaps the most common relaxation of D-optimal design, as proposed by \citet[Eqn. 7.2.6]{boyd2004convex}. By formulating in terms of the quantum relative entropy cone, the identity term suggests this relaxation leaves something ``on the table''.

In this direction, let us apply the MPRT. Observe that $\bm{X}:=\sum_{i \in [n]} z_i \bm{a}_i \bm{a}_i^\top$ is a rank-$k$ matrix and thus at an optimal solution to the original problem there is some orthogonal projection matrix $\bm{Y}$ such that $\bm{X}=\bm{Y}\bm{X}$. Therefore, we can take the perspective function of $f(\bm{X})=\log(\bm{X}{\color{black}+\epsilon\mathbb{I}})$, and thereby obtain the following valid—and potentially much tighter when $k<n$—convex relaxation:
\begin{align}
    \max_{\bm{z} \in [0, 1]^n, \bm{\theta}, \bm{Y} \in \mathcal{S}^n_+}\quad &  \mathrm{tr}(\bm{\theta})+(n-\mathrm{tr}(\bm{Y}))\log(\epsilon) \label{eqn:quantentrdopt}\\ 
     \text{s.t.} \quad & \bm{Y}^\frac{1}{2}\log\left({\blue \bm{Y}^{-\frac{1}{2}}\bm{A}\mathrm{Diag}(\bm{z})\bm{A}^\top \bm{Y}^{-\frac{1}{2}} +\epsilon\mathbb{I}} \right)\bm{Y}^\frac{1}{2}\succeq \bm{\theta}, \bm{Y} \preceq \mathbb{I}, \mathrm{tr}(\bm{Y}) \leq k,\nonumber
\end{align}
which can be modeled via the quantum relative entropy cone: 
{\blue $(-\bm{\theta}, \bm{Y}, \bm{A}\mathrm{Diag}(\bm{z})\bm{A}^\top+\epsilon\bm{Y}) \in \mathcal{K}^{\text{rel, op}}_{\text{mat}}$}. We now argue that this relaxation is high-quality, by demonstrating that the MPRT supplies the convex envelope of $t \geq -\log\det_\epsilon(\bm{X})$ under a low-rank constraint, via the following corollary to Theorem \ref{thm:lowrankset}:
\begin{corollary}\label{lemma:initlemma2}
\begin{align*}
\text{Let} \quad \mathcal{S}=\left\{\bm{X} \in \mathcal{S}^n_+ : t \geq -\log\det_\epsilon(\bm{X}), \mathrm{Rank}(\bm{X}) \leq k \right\}
\end{align*}
be a set where $\epsilon, k,t$ are fixed. Then, this set's convex hull is:
\begin{align*}
\mathcal{S}^c=\bigg\{(\bm{Y}, \bm{X}) \in \mathcal{S}^n_+ \times \mathcal{S}^n_+: & \bm{0} \preceq \bm{Y} \preceq \mathbb{I}, \mathrm{tr}(\bm{Y}) \leq k, \\
& t \geq - \mathrm{tr}(\bm{Y}^\frac{1}{2}\log_{\epsilon}(\bm{Y}^{-\frac{1}{2}}\bm{X}\bm{Y}^{-\frac{1}{2}}) \bm{Y}^\frac{1}{2})-(n-\mathrm{tr}(\bm{Y}))\log(\epsilon)\bigg\}.\nonumber
\end{align*}
\end{corollary}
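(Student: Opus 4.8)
The plan is to recognize Corollary \ref{lemma:initlemma2} as a direct specialization of Theorem \ref{thm:lowrankset} to one particular spectral function, so that essentially no new work is needed once the theorem's hypotheses are checked. Concretely, I would take the scalar generator $\omega(x) = -\log(x+\epsilon)$, whose induced spectral function is $f(\bm{X}) = -\log_\epsilon(\bm{X}) = -\bm{U}\mathrm{Diag}(\log(\lambda_1+\epsilon),\ldots,\log(\lambda_n+\epsilon))\bm{U}^\top$, and set the rank-penalty multiplier to $\mu = 0$. With this choice $\mathrm{tr}(f(\bm{X})) = \sum_{i=1}^n -\log(\lambda_i(\bm{X})+\epsilon) = -\log\det_\epsilon(\bm{X})$, so the generating set of Theorem \ref{thm:lowrankset} becomes exactly $\mathcal{S} = \{\bm{X}\in\mathcal{S}^n_+ : -\log\det_\epsilon(\bm{X}) \leq t,\ \mathrm{Rank}(\bm{X}) \leq k\}$, matching the corollary.

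Before invoking the theorem I would verify its two standing hypotheses on $\omega$ and $f$. First, $\omega(x) = -\log(x+\epsilon)$ is a closed convex univariate function with $0 \in \mathrm{Dom}(\omega)$ and $\omega(0) = -\log(\epsilon)$, which is finite precisely because $\epsilon>0$; this also explains why the $\epsilon$-shift is essential, since the unregularized $-\log$ has $0 \notin \mathrm{Dom}(\omega)$ and is undefined on rank-deficient matrices. Second---and this is the one genuinely non-routine point---I must confirm that $f$ is matrix-convex, since Theorem \ref{thm:lowrankset} requires a spectral \emph{matrix}-convex function and scalar convexity alone does not imply matrix convexity. Here this follows from the classical operator convexity of $s \mapsto -\log s$ on the positive reals \citep[see, e.g.,][Chap. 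V]{bhatia2013matrix}, composed with the affine map $\bm{X} \mapsto \bm{X}+\epsilon\mathbb{I}$, which preserves matrix convexity and extends the domain to all of $\mathcal{S}^n_+$.

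With both hypotheses in hand, Theorem \ref{thm:lowrankset} applies verbatim and expresses $\mathrm{Conv}(\mathcal{S})$ as the set of $(\bm{X},\bm{Y}) \in \mathcal{S}^n_+ \times \mathrm{Conv}(\mathcal{Y}^k_n)$ satisfying $\mathrm{tr}(g_f(\bm{X},\bm{Y})) + (n-\mathrm{tr}(\bm{Y}))\,\omega(0) \leq t$, where the $\mu\cdot\mathrm{tr}(\bm{Y})$ term vanishes. The remaining step is a substitution: by Definition \ref{defn:matrixconv}, the perspective of $f = -\log_\epsilon$ is $g_f(\bm{X},\bm{Y}) = -\bm{Y}^{\frac{1}{2}}\log_\epsilon(\bm{Y}^{-\frac{1}{2}}\bm{X}\bm{Y}^{-\frac{1}{2}})\bm{Y}^{\frac{1}{2}}$, so that $\mathrm{tr}(g_f(\bm{X},\bm{Y})) = -\mathrm{tr}(\bm{Y}^{\frac{1}{2}}\log_\epsilon(\bm{Y}^{-\frac{1}{2}}\bm{X}\bm{Y}^{-\frac{1}{2}})\bm{Y}^{\frac{1}{2}})$, and substituting $\omega(0) = -\log(\epsilon)$ together with the explicit description $\mathrm{Conv}(\mathcal{Y}^k_n) = \{\bm{Y}\in\mathcal{S}^n_+ : \bm{Y}\preceq\mathbb{I},\ \mathrm{tr}(\bm{Y})\leq k\}$ reproduces the claimed $\mathcal{S}^c$ exactly. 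As a sanity check one can verify the reduction at $\bm{Y}=\mathbb{I}$ (i.e. $k=n$), where the constraint collapses to $t \geq -\log\det_\epsilon(\bm{X})$.

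I expect the only real obstacle to be the matrix-convexity verification together with the attendant domain bookkeeping: one must ensure $\bm{Y}^{-\frac{1}{2}}\bm{X}\bm{Y}^{-\frac{1}{2}}$ lies in the domain of $f$ and that $g_f$ is finite precisely when $\bm{X}$ lies in the span of $\bm{Y}$, which is exactly the content of Definition \ref{defn:matrixconv} and the remark following it. All remaining manipulations---carrying out the $\mu=0$ specialization, discarding the vanishing trace term, and evaluating $\omega(0)$---are immediate, so the corollary follows with little more than careful bookkeeping.
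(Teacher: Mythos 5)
Your proposal is correct and follows exactly the paper's (implicit) route: the corollary is stated in the paper as a direct specialization of Theorem \ref{thm:lowrankset} with $\omega(x)=-\log(x+\epsilon)$ and $\mu=0$, and your substitutions $\mathrm{tr}(f(\bm{X}))=-\log\det_\epsilon(\bm{X})$, $\omega(0)=-\log(\epsilon)$, and $g_f(\bm{X},\bm{Y})=-\bm{Y}^{\frac{1}{2}}\log_\epsilon(\bm{Y}^{-\frac{1}{2}}\bm{X}\bm{Y}^{-\frac{1}{2}})\bm{Y}^{\frac{1}{2}}$ reproduce $\mathcal{S}^c$ exactly. Your verification of the theorem's hypotheses---in particular that matrix convexity of $-\log_\epsilon$ follows from operator convexity of $-\log$ composed with the affine shift $\bm{X}\mapsto\bm{X}+\epsilon\mathbb{I}$---is a careful touch the paper leaves unstated, but it does not change the argument.
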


\begin{remark}
Observe that \eqref{eqn:quantentrdopt}'s relaxation is not useful in the over-determined regime where $k \geq n$, since setting $\bm{Y}=\mathbb{I}$ recovers \eqref{eqn:doptdesign1}'s Boolean relaxation, which is considerably cheaper to optimize over. Accordingly, we only consider the under-determined regime in our experiments.
\end{remark}

\subsection{Non-Negative Matrix Optimization}\label{ssec:NNMF}
Many important problems in combinatorial optimization, statistics and computer vision \citep[see, e.g.,][]{burer2009copositive} reduce to optimizing over the space of low-rank matrices with non-negative factors. An important special case is when we would like to find the low-rank {\color{black}completely positive} matrix $\bm{X}$ which best approximates (in a least-squares sense) a given matrix $\bm{A} \in \mathcal{S}^n_+$, i.e., perform non-negative principal component analysis. 
Formally, we have the problem:
\begin{align}\label{eqn:completelypospca}
        \min_{\bm{X} \in \mathcal{C}^n_+: \mathrm{Rank}(\bm{X}) \leq k} \quad & \Vert \bm{X}-\bm{A}\Vert_F^2,
\end{align}
{\color{black}where $\mathcal{C}^n_+:=\{\bm{U}\bm{U}^\top: \bm{U} \in \mathbb{R}^{n \times n}_{+}\}$ denotes the cone of $n \times n$ completely positive matrices.}

Applying the MPRT to the strongly convex $\frac{1}{2}\Vert \bm{X}\Vert_F^2$ term in the objective therefore yields the following completely positive program:
\begin{align}\label{eqn:dnn}
        \min_{\bm{X} \in \mathcal{C}^n_+, \bm{Y}, \bm{\theta} \in \mathcal{S}^n} \quad & \frac{1}{2}\mathrm{tr}(\bm{\theta})-\langle \bm{X}, \bm{A}\rangle+\frac{1}{2}\Vert \bm{A}\Vert_F^2\ \text{s.t.}\ \bm{Y} \preceq \mathbb{I}, \ \mathrm{tr}(\bm{Y})\leq k,\ \begin{pmatrix} \bm{Y} & \bm{X}\\ \bm{X}^\top & \bm{\theta}\end{pmatrix} \in S^{2n}_+.
\end{align}
Interestingly, since \eqref{eqn:dnn}'s reformulation has a linear objective, some extreme point in its relaxation is optimal, which means we can relax the requirement that $\bm{Y}$ is a projection matrix without loss of optimality and the computational complexity of the problem is entirely concentrated in the completely positive cone. Unfortunately however, completely positive optimization itself is intractable. Nonetheless, it can be approximated by replacing the completely positive cone with the doubly non-negative cone, $\mathcal{S}^n_+ \cap \mathbb{R}^{n \times n}_{+}$. Namely, we instead solve
\begin{align}\label{eqn:dnn_relax}
        \min_{\bm{X} \in \mathcal{S}^n_+ \cap \mathbb{R}^{n \times n}_{+}, \bm{Y}, \bm{\theta}  \in \mathcal{S}^n} \ & \frac{1}{2}\mathrm{tr}(\bm{\theta})-\langle \bm{X}, \bm{A}\rangle+\frac{1}{2}\Vert \bm{A}\Vert_F^2\ \text{s.t.} \ \begin{pmatrix} \bm{Y} & \bm{X}\\ \bm{X}^\top & \bm{\theta}\end{pmatrix} \in S^{2n}_+,\ \bm{Y} \preceq \mathbb{I}, \ \mathrm{tr}(\bm{Y}) \leq k.
\end{align}

Unfortunately, rounding a solution to \eqref{eqn:dnn_relax} to obtain a completely positive $\bm{X}$ is non-trivial. Indeed, according to \citet{ge2010doubly}, there is currently no effective mechanism for rounding doubly non-negative programs. Nonetheless, as we shall see in our numerical results, there are already highly effective heuristic methods for completely positive matrix factorization, and combining our relaxation with such a procedure offers certificates of near optimality in a tractable fashion.

\begin{remark}
If $\bm{X}=\bm{D}\bm{\Pi}$ is a monomial matrix, i.e., decomposable as the product of a diagonal matrix $\bm{D}$ and a permutation matrix $\bm{\Pi}$, as occurs in binary optimization problems such as $k$-means clustering problems among others \citep[c.f.][]{peng2007approximating}, then it follows that $(\bm{X}^\top \bm{X})^\dag \geq \bm{0}$ \citep[see][]{plemmons1972generalized} and thus $\bm{Y}:=\bm{X}(\bm{X}^\top \bm{X})^\dag\bm{X}^\top$ is elementwise non-negative. 
In this case, the doubly non-negative relaxation \eqref{eqn:dnn_relax} should be strengthened by requiring that $\bm{Y} \geq \bm{0}$. 
\end{remark}

\section{Numerical Results}\label{sec:numres}
In this section, we evaluate the algorithmic strategies derived in the previous section, implemented in \verb|Julia| 1.5 using \verb|JuMP.jl| $0.21.6$ and \verb|Mosek| $9.1$ to solve the conic problems considered here. Except where indicated otherwise, all experiments were performed on a Intel Xeon E5---2690 v4 2.6GHz CPU core using 32 GB RAM. To bridge the gap between theory and practice, we have made our code freely available on \verb|Github| at \verb|github.com/ryancorywright/MatrixPerspectiveSoftware|.

\subsection{Reduced Rank Regression}
In this section, we compare our convex relaxations for reduced rank regression developed in the introduction and laid out in \eqref{eqn:rrr_persp}-\eqref{eqn:rrr_dcl}—which we refer to as ``Persp'' and ``DCL'' respectively—against the nuclear norm estimator proposed by \cite{negahban2011estimation} (``NN''), who solve
\begin{align}\label{eqn:rrr_nn}
    \min_{\bm{\beta} \in \mathbb{R}^{p \times n}} \quad \frac{1}{2m}\Vert \bm{Y}-\bm{X}\bm{\beta}\Vert_F^2+\frac{1}{2\gamma}\Vert \bm{\beta}\Vert_F^2+\mu \Vert\bm{\beta}\Vert_*.
\end{align}
 
Similarly to \cite{negahban2011estimation}, we attempt to recover rank$-k_{true}$ estimators $\bm{\beta}_{\text{true}}=\bm{U}\bm{V}^\top$, where each entry of $\bm{U} \in \mathbb{R}^{p \times k_{true}}, \bm{V} \in \mathbb{R}^{n \times k_{true}}$ is i.i.d. standard Gaussian $\mathcal{N}(0,1)$, the matrix $\bm{X} \in \mathbb{R}^{m \times p}$ contains i.i.d. standard Gaussian $\mathcal{N}(0,1)$ entries, $\bm{Y}=\bm{X}\bm{\beta}+\bm{E}$, and $E_{i,j} \sim \mathcal{N}(0, \sigma)$ injects a small amount of i.i.d. noise. We set $n=p=50, k=10$, $\gamma =10^6$, $\sigma=0.05$ and vary $m$. To ensure a fair comparison, we cross-validate $\mu$ for both of our relaxations and \cite{negahban2011estimation}'s approach so as to minimize the MSE on a validation set. For each $m$, we evaluate $20$ different values of $\mu$ which are distributed uniformly in logspace between $10^{-4}$ and $10^{4}$ across $50$ random instances for our convex relaxations and report on $100$ different random instances with the ``best'' $\mu$ for each method and each $p$. 
 
\paragraph{Rank recovery and statistical accuracy:} Figures \ref{sfig:redrank.acc}-\ref{sfig:redrank.mse} report 
the relative accuracy ($\Vert \bm{\beta}_{\text{est}}-\bm{\beta}_{\text{true}}\Vert_F/\Vert \bm{\beta}_{\text{true}}\Vert_F$), the rank (i.e., number of singular values of $\bm{\beta}_{\text{est}}$ which exceed $10^{-4}$), and the out-of-sample MSE\footnote{Evaluated on $m=1000$ new observations of $\bm{X}_j, \bm{Y}_k$ generated from the same distribution.} $\Vert \bm{X}_{\text{new}}\bm{\beta}_{\text{est}}-\bm{y}_{\text{new}}\Vert_F^2$ (normalized by the out-of-sample MSE of the ground truth $\Vert \bm{X}_{\text{new}}\bm{\beta}_{\text{true}}-\bm{y}_{\text{new}}\Vert_F^2$). Results are averaged over $100$ random instances per value of $m$. We observe that—even though we did not supply the true rank of the optimal solution in our formulation—Problem \eqref{eqn:rrr_dcl}'s relaxation returns solutions of the correct rank ($k_{true}=10$) and better MSE/accuracy, while our more ``naive'' perspective relaxation \eqref{eqn:rrr_persp} and the nuclear norm approach \eqref{eqn:rrr_nn} return solutions of a higher rank and lower accuracy. This suggests that \eqref{eqn:rrr_dcl}'s formulation should be considered as a more accurate estimator for reduced rank problems, and empirically confirms that the MPRT can lead to significant improvements in statistical accuracy.

\paragraph{Scalability w.r.t. $m$:} 
Figure \ref{sfig:redrank.time} reports the average time for \verb|Mosek| to converge\footnote{We model the convex quadratic $\Vert \bm{X}\bm{\beta}-\bm{Y}\Vert_F^2$ using a rotated second order cone for formulations \eqref{eqn:rrr_persp} and \eqref{eqn:rrr_nn} (the quadratic term doesn't appear directly in \eqref{eqn:rrr_dcl}), model the nuclear norm term in \eqref{eqn:rrr_nn} by introducing matrices $\bm{U}, \bm{V}$ such that $\begin{pmatrix} \bm{U} & \bm{\beta}\\\bm{\beta}^\top & \bm{V}\end{pmatrix} \succeq \bm{0}$ and minimizing $\mathrm{tr}(\bm{U})+\mathrm{tr}(\bm{V})$, use default Mosek parameters for all approaches.} to an optimal solution (over $100$ random instances per $m$). Surprisingly, although \eqref{eqn:rrr_dcl} is a stronger relaxation than \eqref{eqn:rrr_persp}, it is one to two orders of magnitude faster than \eqref{eqn:rrr_persp} and \eqref{eqn:rrr_nn}'s formulations. The relative scalability of \eqref{eqn:rrr_dcl}'s formulation as $m$—the number of observation— increases can be explained by the fact that \eqref{eqn:rrr_dcl} considers a linear inner product of the Gram matrix $\bm{X}^\top \bm{X}$ with a semidefinite matrix $\bm{B}$ (the size of which does not vary with $m$) while Problems \eqref{eqn:rrr_persp} and \eqref{eqn:rrr_nn} have a quadratic inner product $\langle \bm{\beta}\bm{\beta}^\top, \bm{X}^\top \bm{X}\rangle$ which must be modeled using a rotated second-order cone constraint (the size of which depends on $m$), since modern conic solvers such as \verb|Mosek| do not allow quadratic objective terms and semidefinite constraints to be simultaneously present (if they did, we believe all three formulations would scale similarly). 

\begin{figure}[h!]\centering
        \begin{subfigure}[t]{.45\linewidth}
            \includegraphics[width=\textwidth]{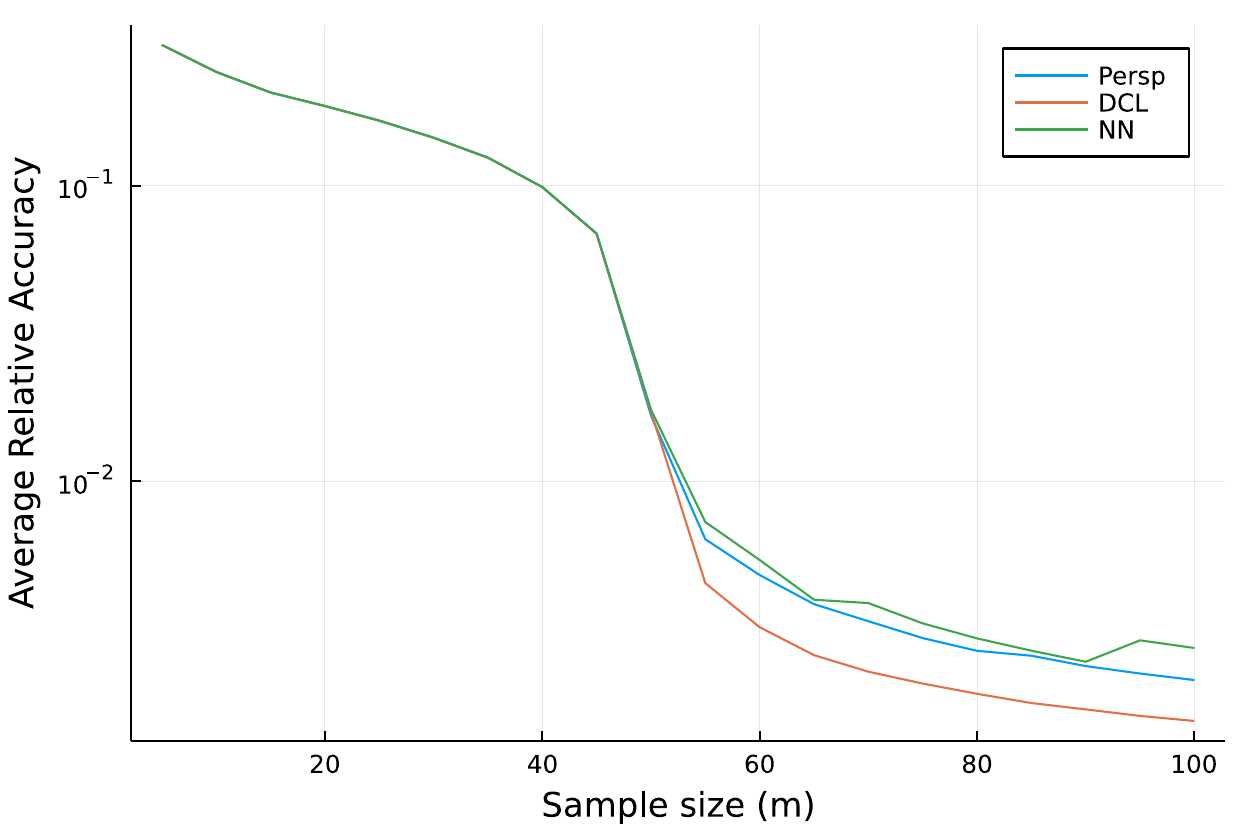}
            \subcaption{Accuracy} \label{sfig:redrank.acc}
    \end{subfigure}
    \begin{subfigure}[t]{.45\linewidth}
    \centering
            \includegraphics[width=\textwidth]{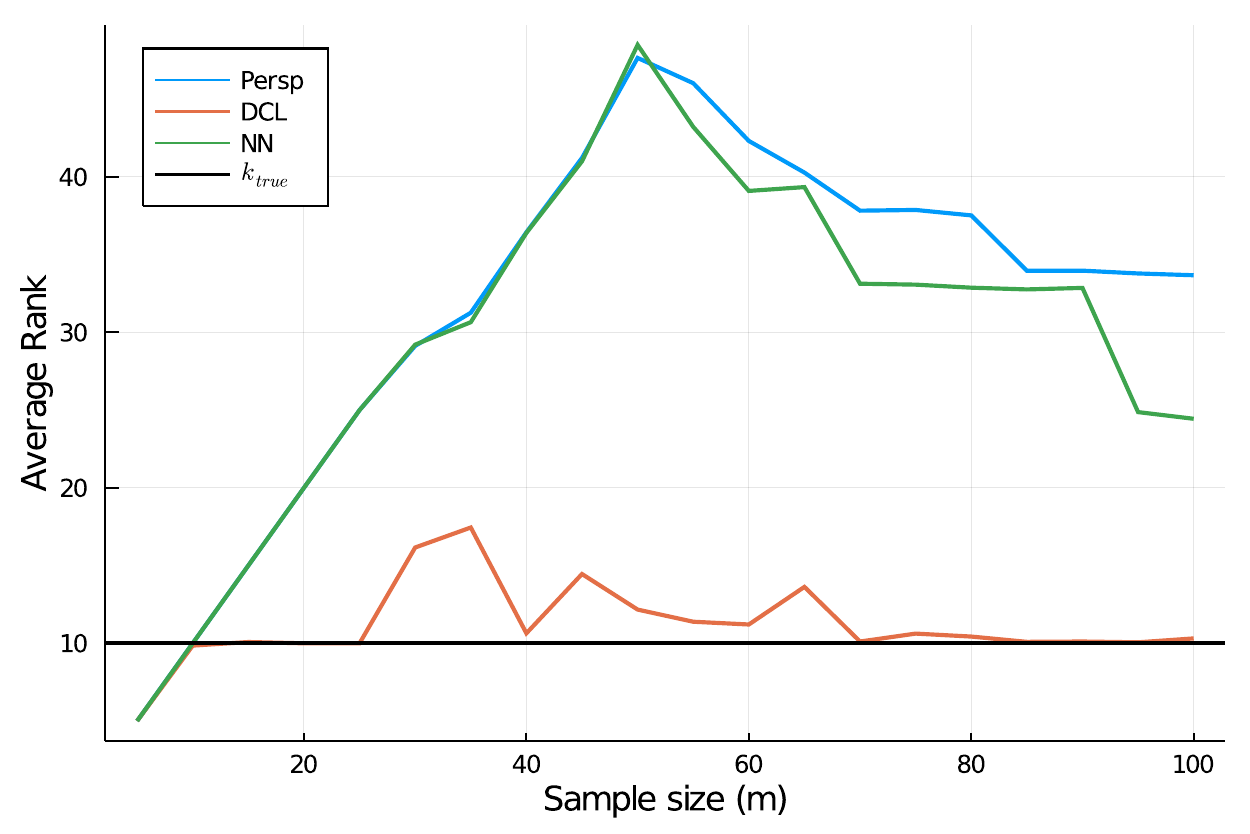}
            \subcaption{Rank} \label{sfig:redrank.rank}
    \end{subfigure} \\
    \begin{subfigure}[t]{.45\linewidth}
            \includegraphics[width=\textwidth]{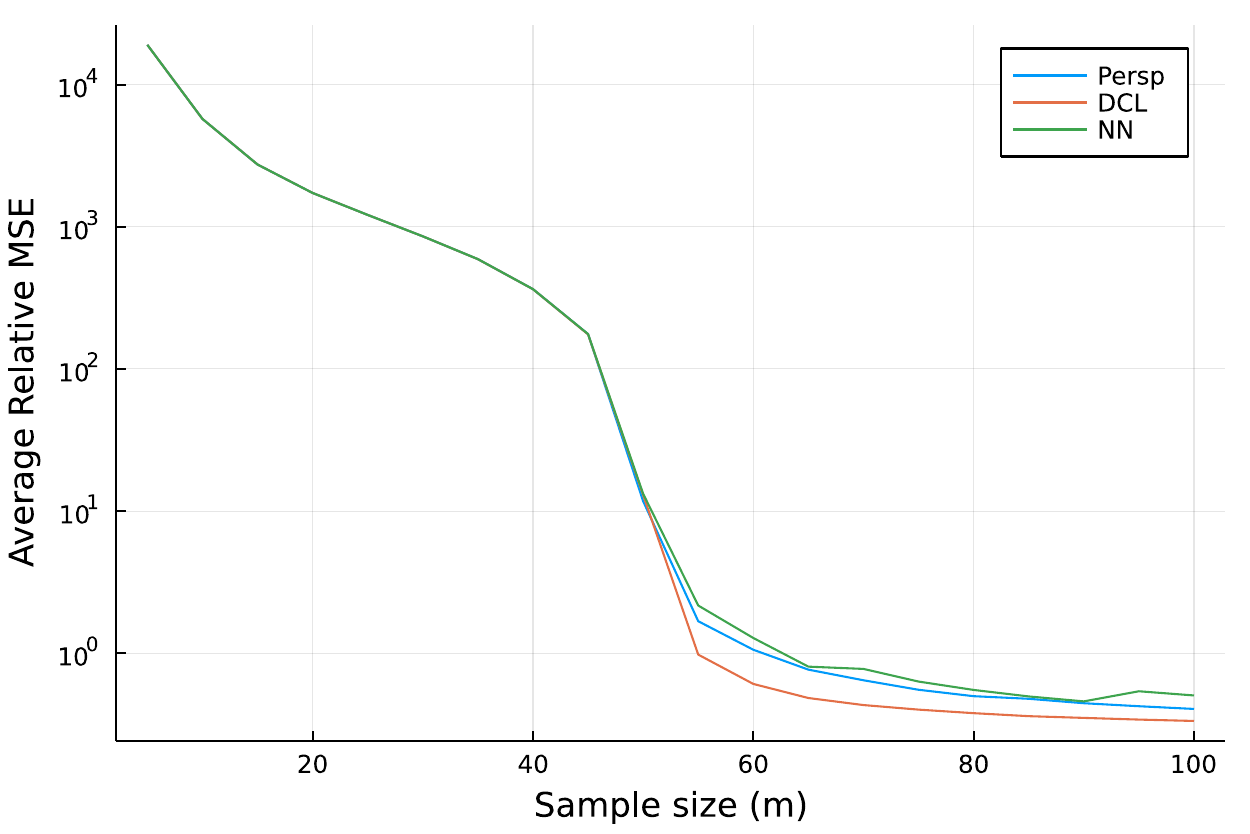}
            \subcaption{Relative MSE} \label{sfig:redrank.mse}
    \end{subfigure}
    \begin{subfigure}[t]{.45\linewidth}
    \centering
            \includegraphics[width=\textwidth]{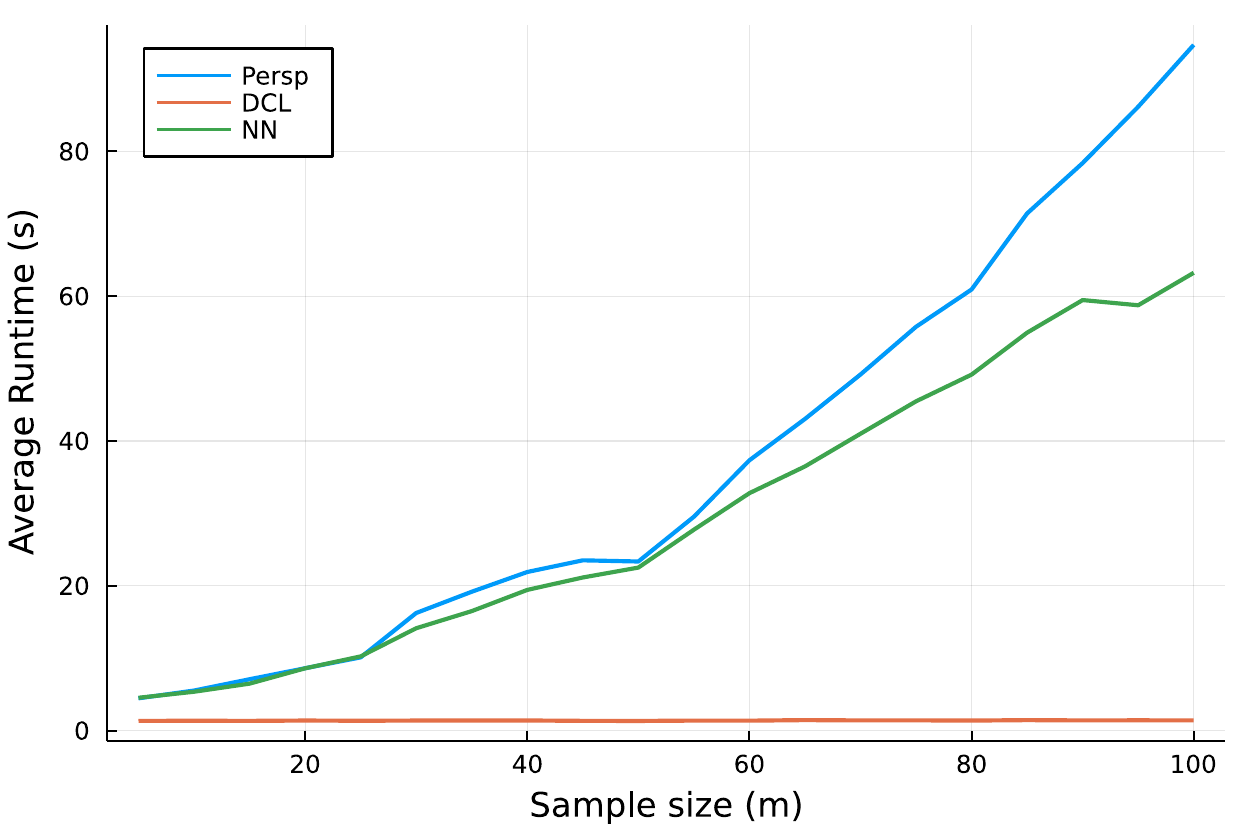}
            \subcaption{Runtime} \label{sfig:redrank.time}
    \end{subfigure}
   \caption{Comparative performance, as the number of samples $m$ increases, of formulations \eqref{eqn:rrr_persp} (Persp, in blue), \eqref{eqn:rrr_dcl} (DCL, in orange) and \eqref{eqn:rrr_nn} (NN, in green), averaged over $100$ synthetic reduced rank regression instances where $n=p=50$, $k_{true}=10$. The hyperparameter $\mu$ was first cross-validated for all approaches separately.}
   \label{fig:sensitivitytom}
\end{figure}

\paragraph{Scalability w.r.t $p$:} Next, we evaluate the scalability of all three approaches in terms of their solve times and peak memory usage (measured using the \verb|slurm| command \verb|MaxRSS|), as $n=p$ increases. Fig. \ref{fig:sensitivityton} depicts the average time to converge to an optimal solution (a) and peak memory consumption (b) by each method as we vary $n=p$ with $m=n$, $k=10$, $\gamma=10^6$, each $\mu$ fixed to the average cross-validated value found in the previous experiment, a peak memory budget of $120$GB, a runtime budget of $12$ hours, and otherwise the same experimental setup as previously (averaged over $20$ random instances per $n$). We observe \eqref{eqn:rrr_dcl}'s relaxation is dramatically more scalable than the other two approaches considered, and can solve problems of nearly twice the size ($4$ times as many variables), and solves problems of a similar size in substantially less time and with substantially less peak memory consumption ($40$s vs. $1000$s when $n=100$). All in all, the proposed relaxation \eqref{eqn:rrr_dcl} seems to be the best method of the three considered.

\begin{figure}[h!]\centering
    \begin{subfigure}[t]{.45\linewidth}
    \centering
           \includegraphics[width=\textwidth]{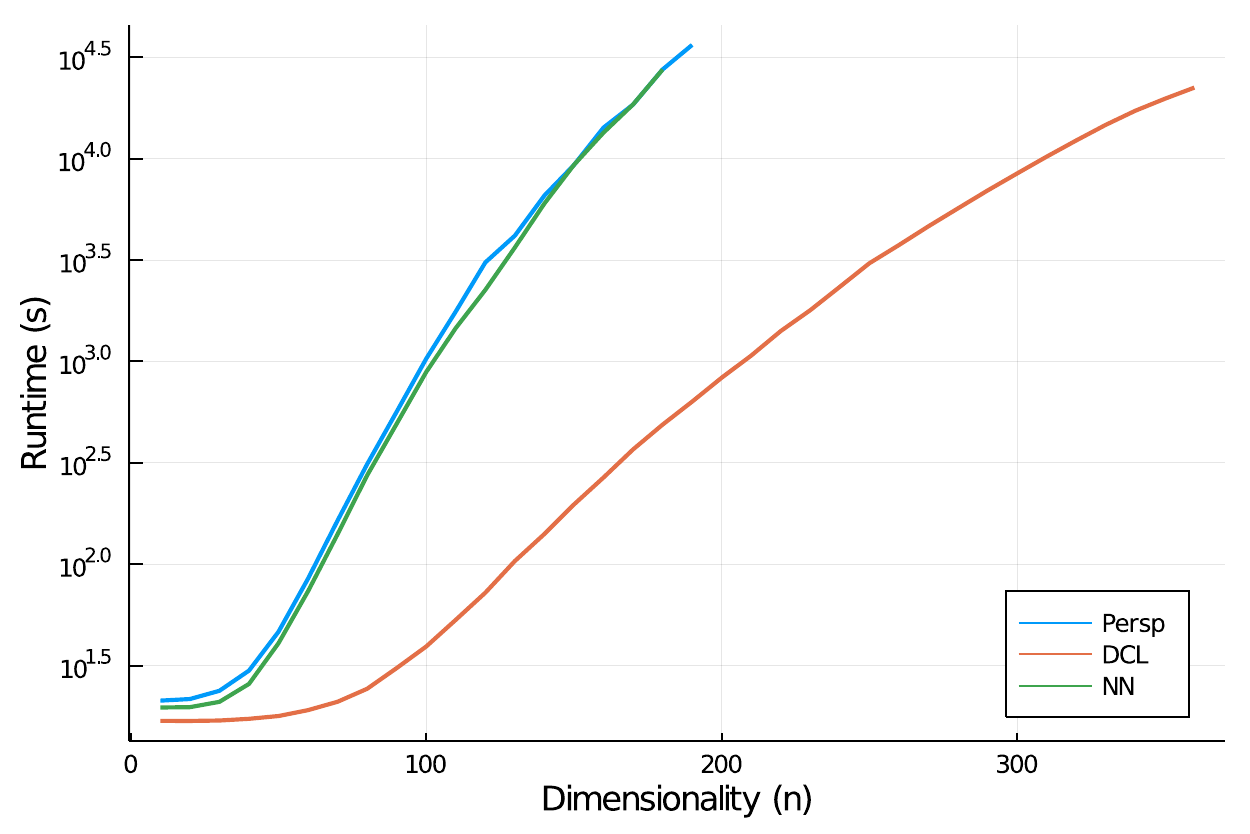} \subcaption{Runtime} 
    \end{subfigure}
        \begin{subfigure}[t]{.45\linewidth}
           \includegraphics[width=\textwidth]{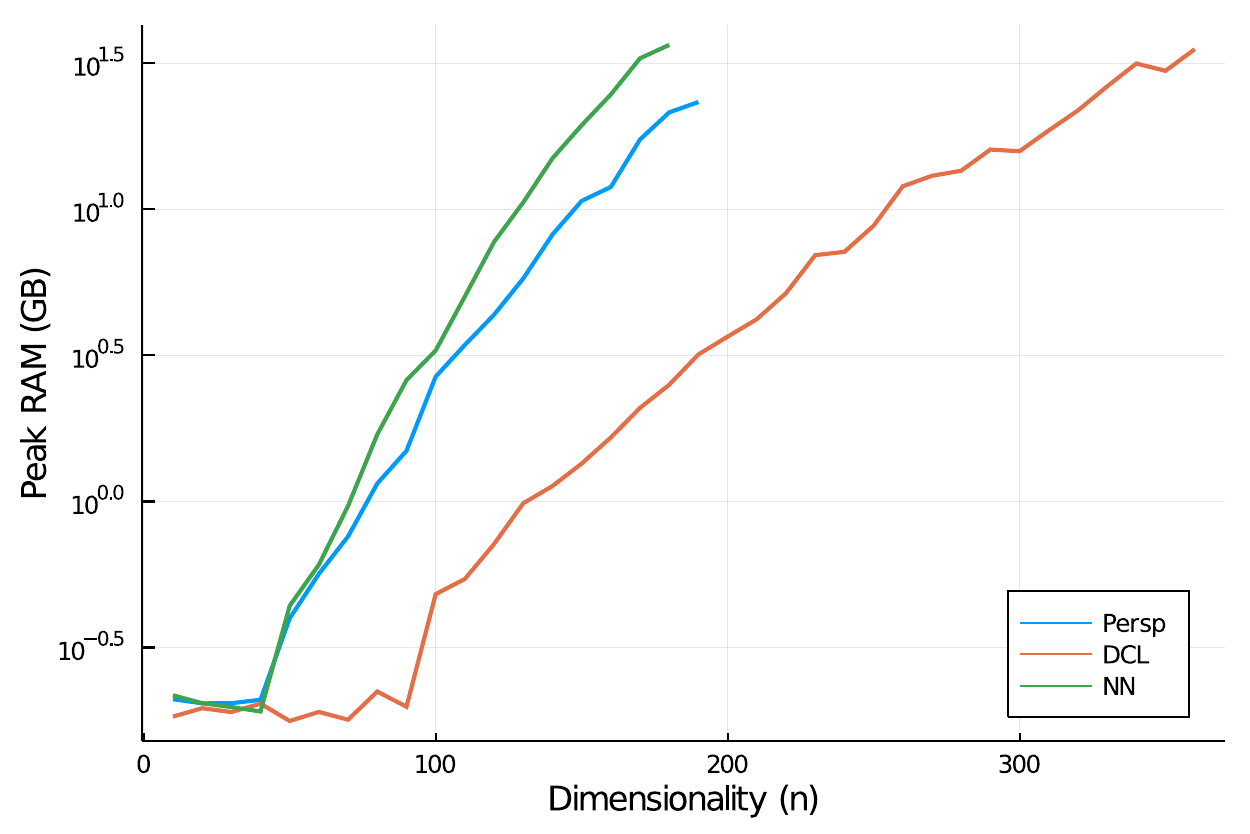} \subcaption{Peak Memory} 
    \end{subfigure}
   \caption{Average time to compute an optimal solution (left panel) and peak memory usage (right panel) vs. dimensionality $n=p$ for Problems \eqref{eqn:rrr_persp} (Persp, in blue), \eqref{eqn:rrr_dcl} (DCL. in orange) and \eqref{eqn:rrr_nn} (NN, in green) over $20$ synthetic reduced rank regression instances where $k_{true}=10$. }
   \label{fig:sensitivityton}
\end{figure}

\subsection{Non-Negative Matrix Factorization}
In this section, we benchmark the quality of our dual bound for non-negative matrix factorization laid out in Section \ref{ssec:NNMF} by using the non-linear reformulation strategy proposed by \cite{burer2003nonlinear} (alternating least squares or ALS) to obtain upper bounds. Namely, we obtain upper bounds by solving for local minima of the problem
\begin{align}\label{eqn:completelypospca2}
        \min_{\bm{U} \in \mathbb{R}^{n \times k}_{+}} \quad & \Vert \bm{U}\bm{U}^\top-\bm{A}\Vert_F^2.
\end{align}
In our implementation of ALS, we obtain a local minimum by introducing a dummy variable $\bm{V}$ which equals $\bm{U}$ at optimality and alternating between solving the following two problems
\begin{align}\label{eqn:completelypospca1}
        \bm{U}_{t+1}=\arg\min_{\bm{U} \in \mathbb{R}^{n \times k}_{+}} \quad & \Vert \bm{U}\bm{V}_t^\top-\bm{A}\Vert_F^2+\rho_t \Vert \bm{U}-\bm{V}_t\Vert_F^2,\\
        \bm{V}_{t+1}=\arg\min_{\bm{V} \in \mathbb{R}^{n \times k}_{+}} \quad & \Vert \bm{U}_t\bm{V}^\top-\bm{A}\Vert_F^2+\rho_t \Vert \bm{U}_t-\bm{V}\Vert_F^2,
\end{align}
where we set $\rho_t=\min(10^{-4}\times 2^{t-1}, 10^5)$ at the $t$th iteration in order that the final matrix is positive semidefinite, as advocated in \citep[Section 5.2.3]{bertsekas1999nonlinear} (we cap $\rho_t$ to avoid numerical instability). We iterate over solving these two problems from a random initialization point $\bm{V}_{0}$—where each $V_{0,i,j}$ is i.i.d. standard uniform—until either the objective value between iterations does not change by $10^{-4}$ or we exceed the maximum number of allowable iterations, which we set to $100$.

To generate problem instances, 
we let $\bm{A}=\bm{U}\bm{U}^\top+\bm{E}$ where $\bm{U} \in \mathbb{R}^{n \times k_{true}}$, each $U_{i,j}$ is uniform on $[0,1]$, $E_{i,j} \sim \mathcal{N}(0, 0.0125 k_{true})$, and set $A_{i,j}=0$ if $A_{i,j}<0$. We set $n=50, k_{true}=10$. We use the ALS heuristic to compute a feasible solution $\bm{X}$ and an upper-bound on the problem's objective value. By comparing it with the lower bound derived from our MPRT, we can assess the sub-optimality of the heuristic solution, which previously lacked optimality guarantees. 

Figure \ref{fig:sensitivitytok} depicts the average relative in-sample MSE of the heuristic ($\Vert \bm{X}-\bm{A}\Vert_F/\Vert \bm{A}\Vert_F$) 
and the relative bound gap—(UB-LB)/UB— as we vary the target rank, averaged over $100$ random synthetic instances. 
We observe that the method is most accurate and has the lowest MSE when $k$ is set to $k_{true}=10$, which confirms that the method can recover solutions of the correct rank. In addition, by combining the solution from OLS with our lower-bound, we can compute a duality gap and assert that the heuristic solution is $0\%-3\%$-optimal, with the gap peaking at $k = k_{true}$ and stabilizing as $k \rightarrow n$. This echoes similar findings in $k$-means clustering and alternating current optimal power flow problems, where the SDO relaxation need not be near-tight in theory but nonetheless is nearly exact in practice \cite{peng2007approximating, lavaei2011zero}. Further, this suggests our convex relaxation may be a powerful weapon for providing gaps for heuristics for non-negative matrix factorization, and particularly detecting when they are performing well or can be further improved.

\begin{figure}[h!]\centering
    \begin{subfigure}[t]{.45\linewidth}
\centering
\includegraphics[width=\textwidth]{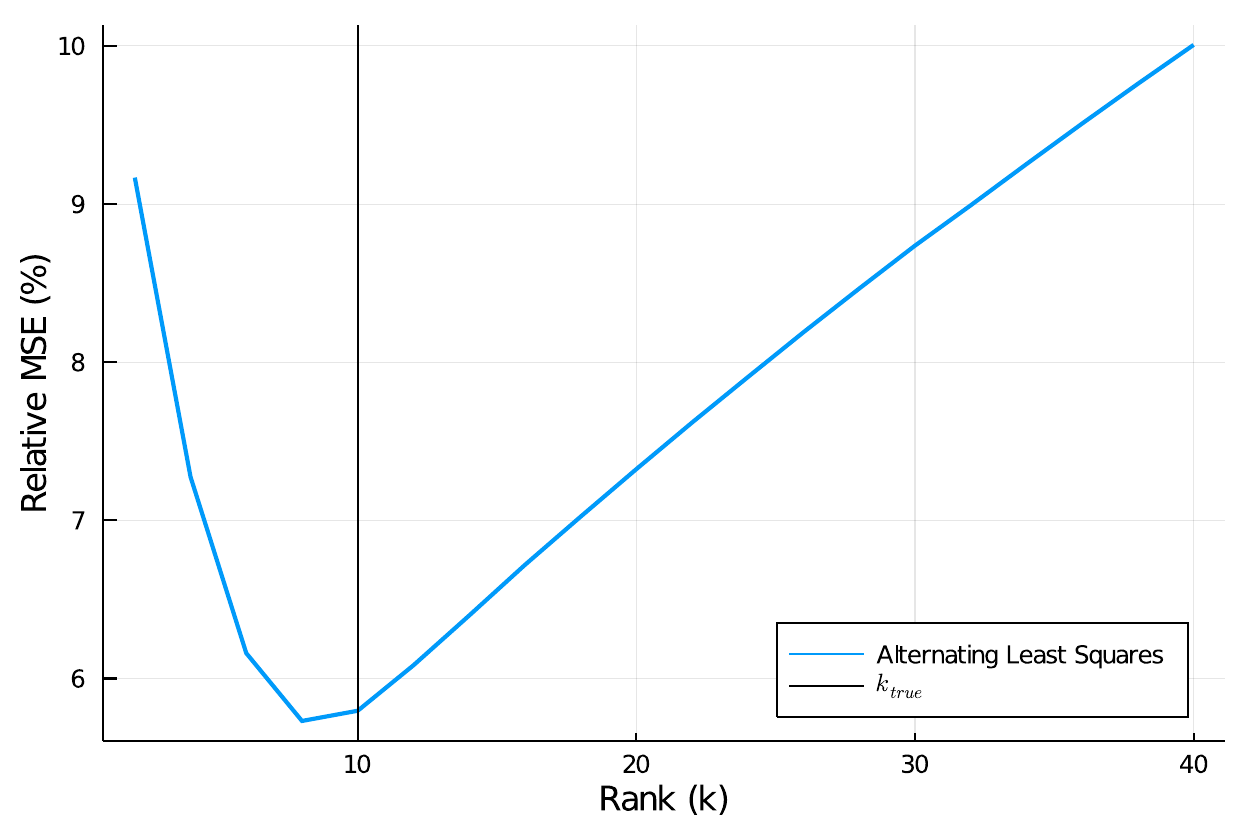}
\subcaption{Relative MSE}
\end{subfigure}
\begin{subfigure}[t]{.45\linewidth}
\centering
\includegraphics[width=\textwidth]{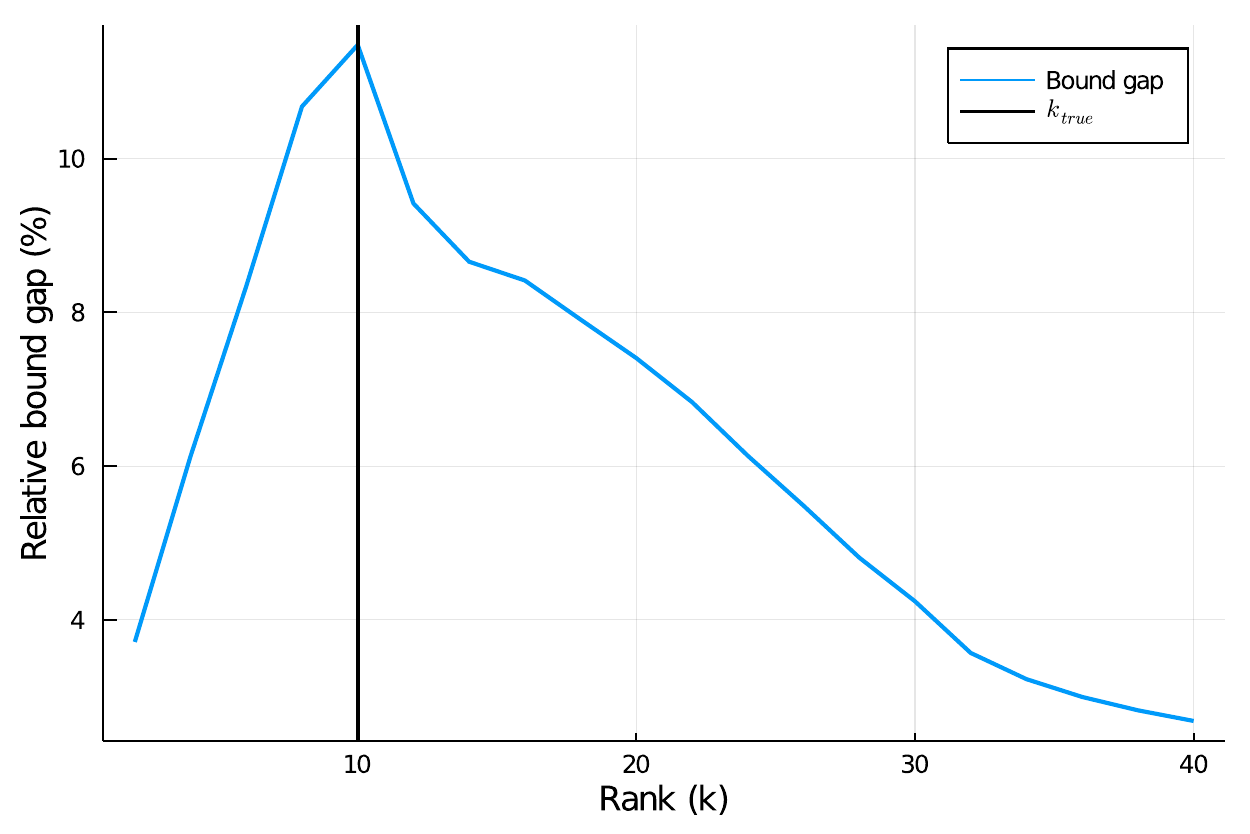}
\subcaption{Bound gap}
\end{subfigure}
\caption{Average relative MSE and duality gap vs. target rank $k$ using the ALS heuristic (UB) and the MPRT relaxation (LB). Results are averaged  over $100$ synthetic completely positive matrix factorization instances where $n=50$, $k_{true}=10$.}
\label{fig:sensitivitytok}
\end{figure}

Figure \ref{fig:nnmf.time.tok} reports the time needed to compute both the upper bound and a lower bound solution as we vary the target rank. 
\begin{figure}[h!]\centering
\includegraphics[width=.5\textwidth]{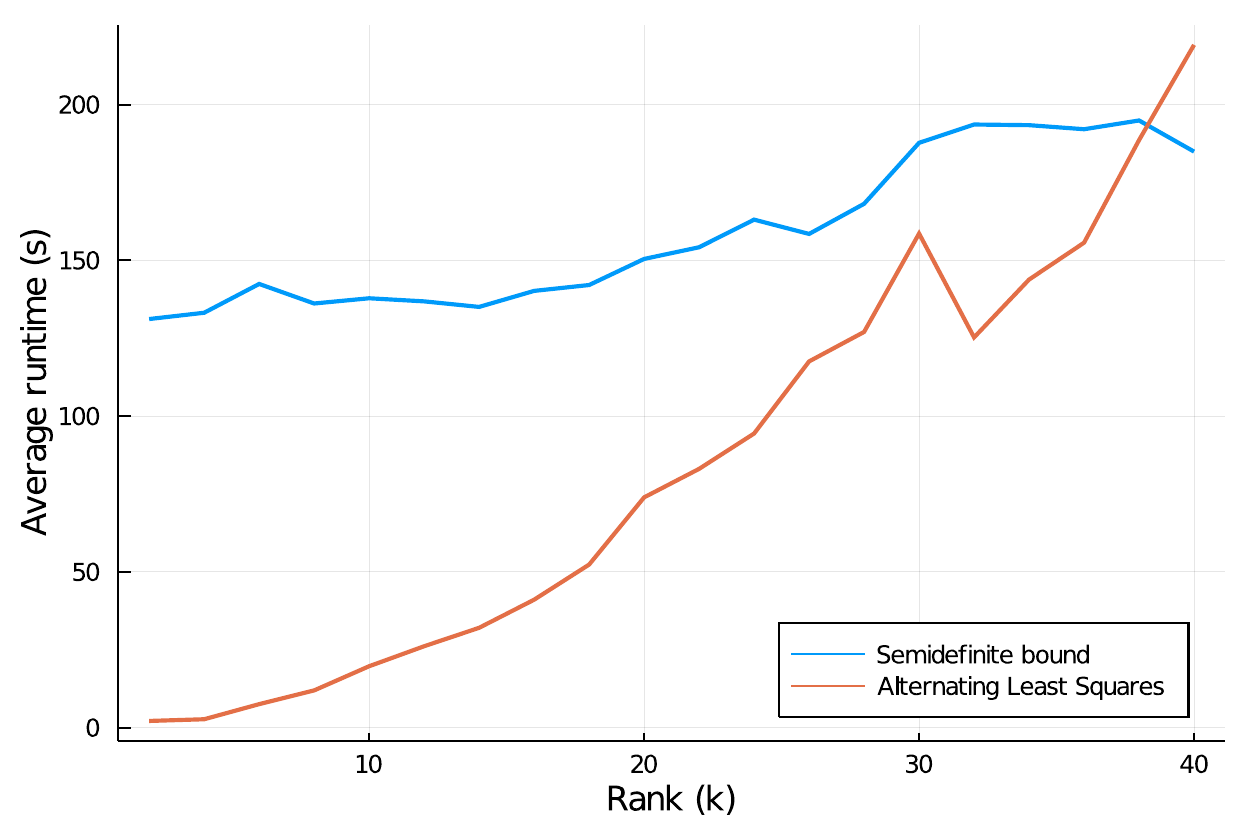}
   \caption{Computational time to compute a feasible solution (ALS) and solve the relaxation (Semidefinite bound) vs. target rank $k$, averaged over $100$ synthetic completely positive matrix factorization instances where $n=50$, $k_{true}=10$.}
   \label{fig:nnmf.time.tok}
\end{figure}

\subsection{Optimal Experimental Design}
In this section, we benchmark our dual bound for D-optimal experimental design \eqref{eqn:quantentrdopt} against the convex relaxation \eqref{eqn:doptdesign1} and a greedy submodular maximization approach, in terms of both bound quality and the ability of all three approaches to generate high-quality feasible solutions. We round both relaxations to generate feasible solutions greedily, by setting the $k$ largest $z_i$'s in a continuous relaxation to $1$, while for the submodular maximization approach we iteratively set the $j$th index of $\bm{z}$ to $1$, where $\mathcal{S}$ is initially an empty set and we iteratively take
\begin{align*}
\mathcal{S} \gets \mathcal{S} \cup \{j\}: j \in \arg\max_{i \in [n]\backslash \mathcal{S}}\left\{\log\det_\epsilon \left(\sum_{l \in \mathcal{S}}z_l \bm{a}_l\bm{a}_l^\top+\bm{a}_i\bm{a}_i^\top \right)\right\}.
\end{align*}
Interestingly, the greedy rounding approach enjoys rigorous approximation guarantees \citep[see][]{joshi2008sensor,singh2018approximation}, while the submodular maximization approach also enjoys strong guarantees \citep[see][]{nemhauser1978analysis}.

We benchmark all methods in terms of their performance on synthetic $D$-optimal experimental design problems, where we let $\bm{A} \in \mathbb{R}^{n \times m}$ be a matrix with i.i.d. $\mathcal{N}(0, \frac{1}{\sqrt{n}})$ entries. We set $n=20, m=10, \epsilon=10^{-6}$ and vary $k <m$ over $20$ random instances. Table \ref{tab:comparison} depicts the average relative bound gap, objective values, and runtimes for all $3$ methods (we use the lower bound from \eqref{eqn:doptdesign1}'s relaxation to compute the submodular bound gap). Note that all results for this experiment were generated on a standard Macbook pro laptop with a $2.9$GHZ $6$-core Intel i$9$ CPU using $16$GB DDR4 RAM, CVX version $1.22$, Matlab R$2021$a, and \verb|Mosek| $9.1$. Moreover, we optimize over \eqref{eqn:quantentrdopt}'s relaxation using the \verb|CVXQuad| package developed by \cite{fawzi2019semidefinite}. 

\begin{table}[h]
\centering\footnotesize
\caption{{Average runtime in seconds and relative bound gap per approach, over $20$ random instances where $n=10, m=20$.}} 
\begin{tabular}{@{}l r r r r r r@{}} \toprule
 & \multicolumn{2}{c@{\hspace{0mm}}}{Problem \eqref{eqn:doptdesign1}+round} &  \multicolumn{2}{c@{\hspace{0mm}}}{Submodular} &  \multicolumn{2}{c@{\hspace{0mm}}}{Problem \eqref{eqn:quantentrdopt}+round} \\
\cmidrule(l){2-3} \cmidrule(l){4-5} \cmidrule(l){6-7} $k$ & Time(s) & Gap ($\%$) & Time(s) & Gap ($\%$) & Time(s) & Gap ($\%$)\\\midrule
$1$ & $0.52$& $88.8$& $0.00$& $88.9$& $347.0$& $0.00$ \\
$2$ & $0.63$& $93.7$& $0.00$& $93.7$& $338.5$& $0.01$ \\
$3$ & $0.59$& $97.1$& $0.00$& $97.0$& $320.8$& $0.06$ \\
$4$ & $0.63$& $100.2$& $0.00$& $100.2$& $338.7$& $0.18$ \\
$5$ & $0.53$& $103.8$& $0.00$& $103.9$& $331.1$& $0.37$ \\
$6$ & $0.53$& $109.0$& $0.00$& $109.0$& $287.5$& $1.40$ \\
$7$ & $0.55$& $117.7$& $0.00$& $117.7$& $255.1$& $2.39$ \\
$8$ & $0.60$& $136.9$& $0.00$& $138.5$& $236.1$& $5.25$ \\
$9$ & $0.54$& $260.9$& $0.00$& $287.5$& $235.9$& $28.43$ \\
\bottomrule
\end{tabular}
\label{tab:comparison}
\end{table}

\paragraph{Relaxation quality:} We observe that \eqref{eqn:quantentrdopt}'s relaxation is dramatically stronger than \eqref{eqn:doptdesign1}, offering bound gaps on the order of $0\%-3\%$ when $k \leq 7$, rather than gaps of $90\%$ or more. This confirms the efficacy of the MPRT, and demonstrates the value of taking low-rank constraints into account when designing convex relaxations, even when not obviously present.

\paragraph{Scalability:} We observe that \eqref{eqn:quantentrdopt}'s relaxation is around two orders of magnitude slower than the other proposed approaches, largely because semidefinite approximations of quantum relative entropy are expensive, but is still tractable for moderate sizes. We believe, however, that the relaxation would scale significantly better if it were optimized over using an interior point method for non-symmetric cones \citep[see, e.g.,][]{skajaa2015homogeneous, karimi2019domain}, or an alternating minimization approach \citep[see][]{faybusovich2020self}. As such, \eqref{eqn:quantentrdopt}'s relaxation is potentially useful at moderate problem sizes with off-the-shelf software, or at larger problem sizes with problem-specific techniques such as alternating minimization.

\section{Conclusion}
In this paper, we introduced the Matrix Perspective Reformulation Technique (MPRT), a new technique for deriving tractable and often high-quality relaxations of a wide variety of low-rank problems. We also invoked the technique to derive the convex hulls of some frequently-studied low-rank sets, and provided examples where the technique proves useful in practice. This is significant and potentially useful to the community, because substantial progress on producing tractable upper bounds for low-rank problems has been made over the past decade, but until now almost no progress on tractable lower bounds has followed.

Future work could take three directions: (1) automatically detecting structures where the MPRT could be applied, as is already done for perspective reformulations in the MIO case by \verb|CPLEX| and \verb|Gurobi|, (2) developing scalable semidefinite-free techniques for solving the semidefinite relaxations proposed in this paper, and (3) combining the ideas in this paper and in our prior work \cite{bertsimas2020mixed} with custom branching strategies to solve low-rank problems to optimality at scale.


{\color{black}
\paragraph{Acknowledgments:}
We are very grateful to two anonymous referees for useful and constructive comments. In particular, we would like to thank reviewer $\#1$ for a very helpful refinement of our definition of the matrix perspective function, and reviewer $\#2$ for suggesting the name \textit{matrix perspective function} and supplying some new references on perspective operator functions. 
}
{
\bibliographystyle{abbrvnat} 

\begin{thebibliography}{77}
\providecommand{\natexlab}[1]{#1}
\providecommand{\url}[1]{\texttt{#1}}
\expandafter\ifx\csname urlstyle\endcsname\relax
  \providecommand{\doi}[1]{doi: #1}\else
  \providecommand{\doi}{doi: \begingroup \urlstyle{rm}\Url}\fi

\bibitem[Akt{\"u}rk et~al.(2009)Akt{\"u}rk, Atamt{\"u}rk, and
  G{\"u}rel]{akturk2009strong}
M.~S. Akt{\"u}rk, A.~Atamt{\"u}rk, and S.~G{\"u}rel.
\newblock A strong conic quadratic reformulation for machine-job assignment
  with controllable processing times.
\newblock \emph{Operations Research Letters}, 37\penalty0 (3):\penalty0
  187--191, 2009.

\bibitem[Alizadeh(1995)]{alizadeh1995interior}
F.~Alizadeh.
\newblock Interior point methods in semidefinite programming with applications
  to combinatorial optimization.
\newblock \emph{{SIAM} Journal on Optimization}, 5\penalty0 (1):\penalty0
  13--51, 1995.

\bibitem[Atamt{\"u}rk and Gomez(2019)]{atamturk2019rank}
A.~Atamt{\"u}rk and A.~Gomez.
\newblock Rank-one convexification for sparse regression.
\newblock \emph{arXiv:1901.10334}, 2019.

\bibitem[Ben-Tal and Nemirovski(2001)]{ben2001lectures}
A.~Ben-Tal and A.~Nemirovski.
\newblock \emph{Lectures on modern convex optimization: Analysis, algorithms,
  and engineering applications}, volume~2.
\newblock {SIAM} Philadelphia, PA, 2001.

\bibitem[Bertsekas(2016)]{bertsekas1999nonlinear}
D.~P. Bertsekas.
\newblock \emph{Nonlinear programming}.
\newblock Athena Scientific Belmont MA, 3rd edition, 2016.

\bibitem[Bertsimas and Van~Parys(2020)]{bertsimas2017sparse}
D.~Bertsimas and B.~Van~Parys.
\newblock Sparse high-dimensional regression: Exact scalable algorithms and
  phase transitions.
\newblock \emph{The Annals of Statistics}, 48\penalty0 (1):\penalty0 300--323,
  2020.

\bibitem[Bertsimas et~al.(2016)Bertsimas, King, and
  Mazumder]{bertsimas2016best}
D.~Bertsimas, A.~King, and R.~Mazumder.
\newblock Best subset selection via a modern optimization lens.
\newblock \emph{The Annals of Statistics}, pages 813--852, 2016.

\bibitem[Bertsimas et~al.(2017)Bertsimas, Copenhaver, and
  Mazumder]{bertsimas2017certifiably}
D.~Bertsimas, M.~S. Copenhaver, and R.~Mazumder.
\newblock Certifiably optimal low rank factor analysis.
\newblock \emph{Journal of Machine Learning Research}, 18\penalty0
  (1):\penalty0 907--959, 2017.

\bibitem[Bertsimas et~al.(2020)Bertsimas, Pauphilet, and
  Van~Parys]{bertsimas2019sparse}
D.~Bertsimas, J.~Pauphilet, and B.~Van~Parys.
\newblock Sparse regression: Scalable algorithms and empirical performance.
\newblock \emph{Statistical Science}, 35\penalty0 (4):\penalty0 555--578, 2020.

\bibitem[Bertsimas et~al.(2021{\natexlab{a}})Bertsimas, Cory-Wright, and
  Pauphilet]{bertsimas2019unified}
D.~Bertsimas, R.~Cory-Wright, and J.~Pauphilet.
\newblock A unified approach to mixed-integer optimization problems with
  logical constraints.
\newblock \emph{SIAM Journal on Optimization}, 31\penalty0 (3):\penalty0
  2340--2367, 2021{\natexlab{a}}.

\bibitem[Bertsimas et~al.(2021{\natexlab{b}})Bertsimas, Cory-Wright, and
  Pauphilet]{bertsimas2020mixed}
D.~Bertsimas, R.~Cory-Wright, and J.~Pauphilet.
\newblock Mixed-projection conic optimization: A new paradigm for modeling rank
  constraints.
\newblock \emph{Operations Research, Articles in Advance}, 2021{\natexlab{b}}.

\bibitem[Bertsimas et~al.(2022)Bertsimas, Cory-Wright, and
  Pauphilet]{bertsimas2020solving}
D.~Bertsimas, R.~Cory-Wright, and J.~Pauphilet.
\newblock Solving large-scale sparse {PCA} to certifiable (near) optimality.
\newblock \emph{Journal of Machine Learning Research}, 23\penalty0
  (13):\penalty0 1--35, 2022.

\bibitem[Bhatia(2013)]{bhatia2013matrix}
R.~Bhatia.
\newblock \emph{Matrix analysis}, volume 169.
\newblock Springer Science \& Business Media New York, 2013.

\bibitem[Bienstock(2010)]{bienstock2010eigenvalue}
D.~Bienstock.
\newblock Eigenvalue techniques for convex objective, nonconvex optimization
  problems.
\newblock In \emph{International Conference on Integer Programming and
  Combinatorial Optimization}, pages 29--42. Springer, 2010.

\bibitem[Boyd and Vandenberghe(2004)]{boyd2004convex}
S.~Boyd and L.~Vandenberghe.
\newblock \emph{Convex Optimization}.
\newblock Cambridge University Press, Cambridge, UK, 2004.

\bibitem[Boyd et~al.(1994)Boyd, El~Ghaoui, Feron, and
  Balakrishnan]{boyd1994linear}
S.~Boyd, L.~El~Ghaoui, E.~Feron, and V.~Balakrishnan.
\newblock \emph{Linear matrix inequalities in system and control theory},
  volume~15.
\newblock Studies in Applied Mathematics, Society for Industrial and Applied
  Mathematics, Philadelphia, PA, 1994.

\bibitem[Burer(2009)]{burer2009copositive}
S.~Burer.
\newblock On the copositive representation of binary and continuous nonconvex
  quadratic programs.
\newblock \emph{Mathematical Programming}, 120\penalty0 (2):\penalty0 479--495,
  2009.

\bibitem[Burer and Monteiro(2003)]{burer2003nonlinear}
S.~Burer and R.~D. Monteiro.
\newblock A nonlinear programming algorithm for solving semidefinite programs
  via low-rank factorization.
\newblock \emph{Mathematical Programming}, 95\penalty0 (2):\penalty0 329--357,
  2003.

\bibitem[Cand{\`e}s and Recht(2009)]{candes2009exact}
E.~J. Cand{\`e}s and B.~Recht.
\newblock Exact matrix completion via convex optimization.
\newblock \emph{Foundations of Computational Mathematics}, 9\penalty0
  (6):\penalty0 717, 2009.

\bibitem[Carlen(2010)]{carlen2010trace}
E.~Carlen.
\newblock Trace inequalities and quantum entropy: an introductory course.
\newblock \emph{Entropy and the quantum}, 529:\penalty0 73--140, 2010.

\bibitem[Ceria and Soares(1999)]{ceria1999convex}
S.~Ceria and J.~Soares.
\newblock Convex programming for disjunctive convex optimization.
\newblock \emph{Mathematical Programming}, 86\penalty0 (3):\penalty0 595--614,
  1999.

\bibitem[Chares(2009)]{chares2009cones}
R.~Chares.
\newblock \emph{Cones and interior-point algorithms for structured convex
  optimization involving powers and exponentials}.
\newblock PhD thesis, UCL-Universit{\'e} Catholique de Louvain, 2009.

\bibitem[Combettes(2018)]{combettes2018perspective}
P.~L. Combettes.
\newblock Perspective functions: Properties, constructions, and examples.
\newblock \emph{Set-Valued and Variational Analysis}, 26\penalty0 (2):\penalty0
  247--264, 2018.

\bibitem[Dacorogna and Mar{\'e}chal(2008)]{dacorogna2008role}
B.~Dacorogna and P.~Mar{\'e}chal.
\newblock The role of perspective functions in convexity, polyconvexity,
  rank-one convexity and separate convexity.
\newblock \emph{Journal of Convex Analysis}, 15\penalty0 (2):\penalty0
  271--284, 2008.

\bibitem[Dong et~al.(2015)Dong, Chen, and Linderoth]{dong2015regularization}
H.~Dong, K.~Chen, and J.~Linderoth.
\newblock Regularization vs. relaxation: A conic optimization perspective of
  statistical variable selection.
\newblock \emph{arXiv:1510.06083}, 2015.

\bibitem[Ebadian et~al.(2011)Ebadian, Nikoufar, and
  Gordji]{ebadian2011perspectives}
A.~Ebadian, I.~Nikoufar, and M.~E. Gordji.
\newblock Perspectives of matrix convex functions.
\newblock \emph{Proceedings of the National Academy of Sciences}, 108\penalty0
  (18):\penalty0 7313--7314, 2011.

\bibitem[Effros and Hansen(2014)]{effros2014non}
E.~Effros and F.~Hansen.
\newblock Non-commutative perspectives.
\newblock \emph{Annals of Functional Analysis}, 5\penalty0 (2):\penalty0
  74--79, 2014.

\bibitem[Effros(2009)]{effros2009matrix}
E.~G. Effros.
\newblock A matrix convexity approach to some celebrated quantum inequalities.
\newblock \emph{Proceedings of the National Academy of Sciences}, 106\penalty0
  (4):\penalty0 1006--1008, 2009.

\bibitem[Fan and Li(2001)]{fan2001variable}
J.~Fan and R.~Li.
\newblock Variable selection via nonconcave penalized likelihood and its oracle
  properties.
\newblock \emph{Journal of the American statistical Association}, 96\penalty0
  (456):\penalty0 1348--1360, 2001.

\bibitem[Farias and Li(2019)]{farias2019learning}
V.~F. Farias and A.~A. Li.
\newblock Learning preferences with side information.
\newblock \emph{Management Science}, 65\penalty0 (7):\penalty0 3131--3149,
  2019.

\bibitem[Fawzi and Saunderson(2017)]{fawzi2017lieb}
H.~Fawzi and J.~Saunderson.
\newblock Lieb's concavity theorem, matrix geometric means, and semidefinite
  optimization.
\newblock \emph{Linear Algebra and its Applications}, 513:\penalty0 240--263,
  2017.

\bibitem[Fawzi et~al.(2019)Fawzi, Saunderson, and
  Parrilo]{fawzi2019semidefinite}
H.~Fawzi, J.~Saunderson, and P.~A. Parrilo.
\newblock Semidefinite approximations of the matrix logarithm.
\newblock \emph{Foundations of Computational Mathematics}, 19\penalty0
  (2):\penalty0 259--296, 2019.

\bibitem[Faybusovich and Zhou(2020)]{faybusovich2020self}
L.~Faybusovich and C.~Zhou.
\newblock Self-concordance and matrix monotonicity with applications to quantum
  entanglement problems.
\newblock \emph{Applied Mathematics and Computation}, 375:\penalty0 125071,
  2020.

\bibitem[Fazel et~al.(2003)Fazel, Hindi, and Boyd]{fazel2003log}
M.~Fazel, H.~Hindi, and S.~P. Boyd.
\newblock Log-det heuristic for matrix rank minimization with applications to
  {H}ankel and {E}uclidean distance matrices.
\newblock In \emph{Proceedings of the 2003 American Control Conference, 2003.},
  volume~3, pages 2156--2162. IEEE, 2003.

\bibitem[Fischetti et~al.(2016)Fischetti, Ljubi{\'c}, and
  Sinnl]{fischetti2016redesigning}
M.~Fischetti, I.~Ljubi{\'c}, and M.~Sinnl.
\newblock Redesigning {B}enders decomposition for large-scale facility
  location.
\newblock \emph{Management Science}, 63\penalty0 (7):\penalty0 2146--2162,
  2016.

\bibitem[Frangioni and Gentile(2006)]{frangioni2006perspective}
A.~Frangioni and C.~Gentile.
\newblock Perspective cuts for a class of convex 0--1 mixed integer programs.
\newblock \emph{Mathematical Programming}, 106\penalty0 (2):\penalty0 225--236,
  2006.

\bibitem[Frangioni and Gentile(2009)]{frangioni2009computational}
A.~Frangioni and C.~Gentile.
\newblock A computational comparison of reformulations of the perspective
  relaxation: {SOCP} vs. cutting planes.
\newblock \emph{Operations Research Letters}, 37\penalty0 (3):\penalty0
  206--210, 2009.

\bibitem[Frangioni et~al.(2020)Frangioni, Gentile, and
  Hungerford]{frangioni2020decompositions}
A.~Frangioni, C.~Gentile, and J.~Hungerford.
\newblock Decompositions of semidefinite matrices and the perspective
  reformulation of nonseparable quadratic programs.
\newblock \emph{Mathematics of Operations Research}, 45\penalty0 (1):\penalty0
  15--33, 2020.

\bibitem[Gandy et~al.(2011)Gandy, Recht, and Yamada]{gandy2011tensor}
S.~Gandy, B.~Recht, and I.~Yamada.
\newblock Tensor completion and low-n-rank tensor recovery via convex
  optimization.
\newblock \emph{Inverse Problems}, 27\penalty0 (2):\penalty0 025010, 2011.

\bibitem[Ge and Ye(2010)]{ge2010doubly}
D.~Ge and Y.~Ye.
\newblock On doubly positive semidefinite programming relaxations.
\newblock \emph{Optimization Online}, 2010.

\bibitem[G{\"u}nl{\"u}k and Linderoth(2010)]{gunluk2010perspective}
O.~G{\"u}nl{\"u}k and J.~Linderoth.
\newblock Perspective reformulations of mixed integer nonlinear programs with
  indicator variables.
\newblock \emph{Mathematical Programming}, 124\penalty0 (1-2):\penalty0
  183--205, 2010.

\bibitem[Han et~al.(2020)Han, G{\'o}mez, and Atamt{\"u}rk]{han20202x2}
S.~Han, A.~G{\'o}mez, and A.~Atamt{\"u}rk.
\newblock 2x2 convexifications for convex quadratic optimization with indicator
  variables.
\newblock \emph{arXiv:2004.07448}, 2020.

\bibitem[Hazimeh et~al.(2021)Hazimeh, Mazumder, and Saab]{hazimeh2020sparse}
H.~Hazimeh, R.~Mazumder, and A.~Saab.
\newblock Sparse regression at scale: Branch-and-bound rooted in first-order
  optimization.
\newblock \emph{Mathematical Programming, articles in advance}, pages 1--42,
  2021.

\bibitem[Hiai and Petz(1991)]{hiai1991proper}
F.~Hiai and D.~Petz.
\newblock The proper formula for relative entropy and its asymptotics in
  quantum probability.
\newblock \emph{Communications in Mathematical Physics}, 143\penalty0
  (1):\penalty0 99--114, 1991.

\bibitem[Hiriart-Urruty and Lemar{\'e}chal(2013)]{hiriart2013convex}
J.-B. Hiriart-Urruty and C.~Lemar{\'e}chal.
\newblock \emph{Convex analysis and minimization algorithms I: Fundamentals},
  volume 305.
\newblock Springer Science \& Business Media Berlin, 2013.

\bibitem[Horn and Johnson(1985)]{johnson1985matrix}
R.~A. Horn and C.~R. Johnson.
\newblock \emph{Matrix analysis}.
\newblock Cambridge {U}niversity {P}ress, {N}ew {Y}ork, 1985.

\bibitem[Joshi and Boyd(2008)]{joshi2008sensor}
S.~Joshi and S.~Boyd.
\newblock Sensor selection via convex optimization.
\newblock \emph{IEEE Transactions on Signal Processing}, 57\penalty0
  (2):\penalty0 451--462, 2008.

\bibitem[Karimi and Tun{\c{c}}el(2019)]{karimi2019domain}
M.~Karimi and L.~Tun{\c{c}}el.
\newblock Domain-driven solver ({DDS}): a {MATLAB}-based software package for
  convex optimization problems in domain-driven form.
\newblock \emph{arXiv preprint arXiv:1908.03075}, 2019.

\bibitem[Kolda and Bader(2009)]{kolda2009tensor}
T.~G. Kolda and B.~W. Bader.
\newblock Tensor decompositions and applications.
\newblock \emph{SIAM Review}, 51\penalty0 (3):\penalty0 455--500, 2009.

\bibitem[Lavaei and Low(2011)]{lavaei2011zero}
J.~Lavaei and S.~H. Low.
\newblock Zero duality gap in optimal power flow problem.
\newblock \emph{IEEE Transactions on Power Systems}, 27\penalty0 (1):\penalty0
  92--107, 2011.

\bibitem[Lewis(1996)]{lewis1996convex}
A.~S. Lewis.
\newblock Convex analysis on the {H}ermitian matrices.
\newblock \emph{SIAM Journal on Optimization}, 6\penalty0 (1):\penalty0
  164--177, 1996.

\bibitem[Lieb and Ruskai(1973)]{lieb1973proof}
E.~H. Lieb and M.~B. Ruskai.
\newblock Proof of the strong subadditivity of quantum-mechanical entropy. with
  an appendix by {B}. {S}imon.
\newblock \emph{Journal of Mathematical Physics}, 14:\penalty0 1938--1941,
  1973.

\bibitem[Mar{\'e}chal(2001)]{marechal2001convexity}
P.~Mar{\'e}chal.
\newblock On the convexity of the multiplicative potential and penalty
  functions and related topics.
\newblock \emph{Mathematical Programming}, 89\penalty0 (3):\penalty0 505--516,
  2001.

\bibitem[Mar{\'e}chal(2005{\natexlab{a}})]{marechal2005functional1}
P.~Mar{\'e}chal.
\newblock On a functional operation generating convex functions, part 1:
  duality.
\newblock \emph{Journal of Optimization Theory and Applications}, 126\penalty0
  (1):\penalty0 175--189, 2005{\natexlab{a}}.

\bibitem[Mar{\'e}chal(2005{\natexlab{b}})]{marechal2005functional2}
P.~Mar{\'e}chal.
\newblock On a functional operation generating convex functions, part 2:
  algebraic properties.
\newblock \emph{Journal of Optimization Theory and Applications}, 126\penalty0
  (2):\penalty0 357--366, 2005{\natexlab{b}}.

\bibitem[Negahban and Wainwright(2011)]{negahban2011estimation}
S.~Negahban and M.~J. Wainwright.
\newblock Estimation of (near) low-rank matrices with noise and
  high-dimensional scaling.
\newblock \emph{The Annals of Statistics}, pages 1069--1097, 2011.

\bibitem[Nemhauser et~al.(1978)Nemhauser, Wolsey, and
  Fisher]{nemhauser1978analysis}
G.~L. Nemhauser, L.~A. Wolsey, and M.~L. Fisher.
\newblock An analysis of approximations for maximizing submodular set
  functions—i.
\newblock \emph{Mathematical Programming}, 14\penalty0 (1):\penalty0 265--294,
  1978.

\bibitem[Nguyen et~al.(2019)Nguyen, Kim, and Shim]{nguyen2019low}
L.~T. Nguyen, J.~Kim, and B.~Shim.
\newblock Low-rank matrix completion: A contemporary survey.
\newblock \emph{IEEE Access}, 7:\penalty0 94215--94237, 2019.

\bibitem[Overton and Womersley(1992)]{overton1992sum}
M.~L. Overton and R.~S. Womersley.
\newblock On the sum of the largest eigenvalues of a symmetric matrix.
\newblock \emph{SIAM Journal on Matrix Analysis and Applications}, 13\penalty0
  (1):\penalty0 41--45, 1992.

\bibitem[Overton and Womersley(1993)]{overton1993optimality}
M.~L. Overton and R.~S. Womersley.
\newblock Optimality conditions and duality theory for minimizing sums of the
  largest eigenvalues of symmetric matrices.
\newblock \emph{Mathematical Programming}, 62\penalty0 (1-3):\penalty0
  321--357, 1993.

\bibitem[Pataki(1998)]{pataki1998rank}
G.~Pataki.
\newblock On the rank of extreme matrices in semidefinite programs and the
  multiplicity of optimal eigenvalues.
\newblock \emph{Mathematics of Operations Research}, 23\penalty0 (2):\penalty0
  339--358, 1998.

\bibitem[Peng and Wei(2007)]{peng2007approximating}
J.~Peng and Y.~Wei.
\newblock Approximating {K}-means-type clustering via semidefinite programming.
\newblock \emph{SIAM Journal on Optimization}, 18\penalty0 (1):\penalty0
  186--205, 2007.

\bibitem[Permenter and Parrilo(2018)]{permenter2018partial}
F.~Permenter and P.~Parrilo.
\newblock Partial facial reduction: simplified, equivalent {SDP}s via
  approximations of the {PSD} cone.
\newblock \emph{Mathematical Programming}, 171\penalty0 (1-2):\penalty0 1--54,
  2018.

\bibitem[Pilanci et~al.(2015)Pilanci, Wainwright, and
  El~Ghaoui]{pilanci2015sparse}
M.~Pilanci, M.~J. Wainwright, and L.~El~Ghaoui.
\newblock Sparse learning via {B}oolean relaxations.
\newblock \emph{Mathematical Programming}, 151\penalty0 (1):\penalty0 63--87,
  2015.

\bibitem[Plemmons and Cline(1972)]{plemmons1972generalized}
R.~Plemmons and R.~Cline.
\newblock The generalized inverse of a nonnegative matrix.
\newblock \emph{Proceedings of the American Mathematical Society}, pages
  46--50, 1972.

\bibitem[Renegar(2001)]{renegar2001mathematical}
J.~Renegar.
\newblock \emph{A mathematical view of interior-point methods in convex
  optimization}, volume~3.
\newblock Society for Industrial and Applied Mathematics, 2001.

\bibitem[Rockafellar(1970)]{rockafellar1970convex}
R.~T. Rockafellar.
\newblock \emph{Convex analysis}.
\newblock Number~28. Princeton university press, 1970.

\bibitem[Romera-Paredes and Pontil(2013)]{romera2013new}
B.~Romera-Paredes and M.~Pontil.
\newblock A new convex relaxation for tensor completion.
\newblock \emph{arXiv preprint arXiv:1307.4653}, 2013.

\bibitem[Singh and Xie(2020)]{singh2018approximation}
M.~Singh and W.~Xie.
\newblock Approximation algorithms for {D}-optimal design.
\newblock \emph{Mathematics of Operations Research}, 45:\penalty0 1193--1620,
  2020.

\bibitem[Skajaa and Ye(2015)]{skajaa2015homogeneous}
A.~Skajaa and Y.~Ye.
\newblock A homogeneous interior-point algorithm for nonsymmetric convex conic
  optimization.
\newblock \emph{Mathematical Programming}, 150\penalty0 (2):\penalty0 391--422,
  2015.

\bibitem[Stubbs(1996)]{stubbs1998branch}
R.~A. Stubbs.
\newblock \emph{Branch-and-cut methods for mixed 0-1 convex programming}.
\newblock PhD thesis, Northwestern University, 1996.

\bibitem[Stubbs and Mehrotra(1999)]{stubbs1999branch}
R.~A. Stubbs and S.~Mehrotra.
\newblock A branch-and-cut method for 0-1 mixed convex programming.
\newblock \emph{Mathematical Programming}, 86\penalty0 (3):\penalty0 515--532,
  1999.

\bibitem[Wang and K{\i}l{\i}n{\c{c}}-Karzan(2021)]{wang2019tightness}
A.~L. Wang and F.~K{\i}l{\i}n{\c{c}}-Karzan.
\newblock On the tightness of {SDP} relaxations of {QCQP}s.
\newblock \emph{Mathematical Programming, Articles in Advance}, pages 1--41,
  2021.

\bibitem[Wolkowicz et~al.(2012)Wolkowicz, Saigal, and
  Vandenberghe]{wolkowicz2012handbook}
H.~Wolkowicz, R.~Saigal, and L.~Vandenberghe.
\newblock \emph{Handbook of semidefinite programming: theory, algorithms, and
  applications}, volume~27.
\newblock Springer Science \& Business Media, 2012.

\bibitem[Xie and Deng(2020)]{xie2018ccp}
W.~Xie and X.~Deng.
\newblock Scalable algorithms for the sparse ridge regression.
\newblock \emph{SIAM Journal on Optimization}, 30\penalty0 (4):\penalty0
  3359--3386, 2020.

\bibitem[Zhang(2010)]{zhang2010nearly}
C.-H. Zhang.
\newblock Nearly unbiased variable selection under minimax concave penalty.
\newblock \emph{The Annals of Statistics}, 38\penalty0 (2):\penalty0 894--942,
  2010.

\bibitem[Zheng et~al.(2014)Zheng, Sun, and Li]{zheng2014improving}
X.~Zheng, X.~Sun, and D.~Li.
\newblock Improving the performance of {MIQP} solvers for quadratic programs
  with cardinality and minimum threshold constraints: A semidefinite program
  approach.
\newblock \emph{INFORMS Journal on Computing}, 26\penalty0 (4):\penalty0
  690--703, 2014.

\end{thebibliography}

}
\begin{appendices}

\section{\blue Background on Operator Functions}\label{sec:A.background}
{\blue In this work, we make repeated use of operator functions, i.e., functions defined from the spectral decomposition of a matrix. Namely, for any function $\omega : \mathbb{R} \rightarrow \mathbb{R}$, its corresponding operator function $f_\omega : \mathcal{S}^n \rightarrow \mathcal{S}^n$ is defined as
\begin{align*}
    f_\omega(\bm{X}) = \bm{U} \operatorname{Diag}(\omega(\lambda_1^x),\dots, \omega(\lambda_n^x)) \bm{U}^\top
\end{align*}
where $\bm{X} = \bm{U} \operatorname{Diag}(\lambda_1^x,\dots, \lambda_n^x) \bm{U}^\top$ is an eigendecomposition of $\bm{X}$. In this appendix, we present some common examples and useful properties of operator functions.}

\subsection{\blue Examples: Matrix exponential and logarithm} \label{ssec:A.explog}
For self-consistency of the paper, we now define {\blue the matrix exponential and logarithm} functions and summarize their properties. These results are well known and can be found in modern matrix analysis textbooks \citep[see, e.g.,][]{bhatia2013matrix} 
\begin{definition}[Matrix exponential]
Let $\bm{X} \in \mathcal{S}^n$ be a symmetric matrix with eigendecomposition $\bm{X}=\bm{U}\bm{\Lambda}\bm{U}^\top$. Letting $\exp(\bm{\Lambda})=\mathrm{diag}(e^{\lambda_1}, e^{\lambda_2}, \ldots, e^{\lambda_n})$, we define
    $\exp(\bm{X}):=\bm{U}\exp(\bm{\Lambda})\bm{U}^\top$.
\end{definition}
\begin{proposition}
The matrix exponential, $\exp: \mathcal{S}^n \rightarrow \mathcal{S}^n_+$, satisfies the following properties:
\begin{itemize}
    \item Power series expansion: $\exp(\bm{X})=\mathbb{I}+\sum_{i=1}^\infty \frac{1}{i!}\bm{X}^i$.
    \item Trace monotonicity: $\bm{X} \preceq \bm{Y} \implies \mathrm{tr}(\exp(\bm{X}))\leq \mathrm{tr}(\exp(\bm{Y}))$.
    \item Golden-Thompson-inequality: $\mathrm{tr}(\exp(\bm{X}+\bm{Y})) \leq \mathrm{tr}(\exp(\bm{X}))+\mathrm{tr}(\exp(\bm{Y}))$.
\end{itemize}
\end{proposition}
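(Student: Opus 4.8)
The plan is to derive each of the three stated properties directly from the spectral definition $\exp(\bm{X})=\bm{U}\exp(\bm{\Lambda})\bm{U}^\top$, since all three are classical facts recorded in \cite{bhatia2013matrix}; the first two follow in one line from diagonalization, whereas the third (Golden--Thompson) is genuinely deeper and is where the real work lies. Before proceeding I would flag that the inequality as \emph{displayed}, with a sum on the right-hand side, cannot be the intended claim: already in the scalar case $n=1$ it reads $e^{x+y}\le e^{x}+e^{y}$, which fails at $x=y=1$ since $e^{2}>2e$. The statement the name ``Golden--Thompson'' refers to carries a \emph{product} on the right, $\mathrm{tr}(\exp(\bm{X}+\bm{Y}))\le \mathrm{tr}(\exp(\bm{X})\exp(\bm{Y}))$, and it is this corrected form that I would prove.

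For the power series expansion, first I would diagonalize $\bm{X}=\bm{U}\bm{\Lambda}\bm{U}^\top$ with $\bm{U}$ orthogonal and use the scalar series $e^{\lambda}=\sum_{i\ge 0}\lambda^{i}/i!$ entrywise to get $\exp(\bm{\Lambda})=\sum_{i\ge 0}\bm{\Lambda}^{i}/i!$. Conjugating and using $\bm{U}^\top\bm{U}=\mathbb{I}$ gives $\bm{U}\bm{\Lambda}^{i}\bm{U}^\top=(\bm{U}\bm{\Lambda}\bm{U}^\top)^{i}=\bm{X}^{i}$, so that
\begin{align*}
\exp(\bm{X})=\bm{U}\Big(\sum_{i\ge 0}\tfrac{1}{i!}\bm{\Lambda}^{i}\Big)\bm{U}^\top=\sum_{i\ge 0}\tfrac{1}{i!}\bm{X}^{i}=\mathbb{I}+\sum_{i\ge 1}\tfrac{1}{i!}\bm{X}^{i}.
\end{align*}
The only point needing a word of justification is the interchange of the (continuous, linear) conjugation map $\bm{M}\mapsto\bm{U}\bm{M}\bm{U}^\top$ with the infinite sum, which is immediate from absolute convergence of the series in any submultiplicative matrix norm.

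For trace monotonicity I would avoid the Loewner order on $\exp$ itself (which is \emph{not} operator monotone) and argue instead at the level of eigenvalues. Writing $\bm{Y}=\bm{X}+\bm{H}$ with $\bm{H}\succeq\bm{0}$, Weyl's monotonicity principle gives, for each $k$ (with eigenvalues sorted decreasingly), $\lambda_{k}(\bm{Y})=\lambda_{k}(\bm{X}+\bm{H})\ge \lambda_{k}(\bm{X})+\lambda_{n}(\bm{H})\ge \lambda_{k}(\bm{X})$, since $\lambda_{n}(\bm{H})\ge 0$. As $t\mapsto e^{t}$ is increasing, $e^{\lambda_{k}(\bm{X})}\le e^{\lambda_{k}(\bm{Y})}$ for every $k$, and summing over $k$ yields $\mathrm{tr}(\exp(\bm{X}))=\sum_{k}e^{\lambda_{k}(\bm{X})}\le \sum_{k}e^{\lambda_{k}(\bm{Y})}=\mathrm{tr}(\exp(\bm{Y}))$.

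The hard part is Golden--Thompson, which has no one-line eigenvalue proof and genuinely requires the Lie--Trotter/trace-inequality machinery; this is the anticipated obstacle. My plan rests on two ingredients. First, the Lie--Trotter product formula $\exp(\bm{X}+\bm{Y})=\lim_{m\to\infty}\big(\exp(\bm{X}/m)\exp(\bm{Y}/m)\big)^{m}$, obtained from the power series by comparing $\exp((\bm{X}+\bm{Y})/m)$ with $\exp(\bm{X}/m)\exp(\bm{Y}/m)$ to first order in $1/m$. Second, the trace-power inequality $\mathrm{tr}\big((\bm{P}\bm{Q})^{2^{k}}\big)\le \mathrm{tr}\big(\bm{P}^{2^{k}}\bm{Q}^{2^{k}}\big)$ for $\bm{P},\bm{Q}\succeq\bm{0}$, proved by induction from the base case $\mathrm{tr}((\bm{P}\bm{Q})^{2})\le \mathrm{tr}(\bm{P}^{2}\bm{Q}^{2})$; the base case follows because $\bm{M}:=\bm{P}\bm{Q}-\bm{Q}\bm{P}$ is anti-symmetric, so $0\le \mathrm{tr}(\bm{M}\bm{M}^\top)=-\mathrm{tr}(\bm{M}^{2})=2\mathrm{tr}(\bm{P}^{2}\bm{Q}^{2})-2\mathrm{tr}((\bm{P}\bm{Q})^{2})$. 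Applying the trace-power inequality with $\bm{P}=\exp(\bm{X}/m)$, $\bm{Q}=\exp(\bm{Y}/m)$ along $m=2^{k}\to\infty$ and passing to the limit via Lie--Trotter and continuity of the trace gives $\mathrm{tr}(\exp(\bm{X}+\bm{Y}))\le \mathrm{tr}(\exp(\bm{X})\exp(\bm{Y}))$. As this property is invoked here only as recalled background, I would alternatively cite the full argument in \cite{bhatia2013matrix} rather than reproduce the induction.
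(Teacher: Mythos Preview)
Your proofs are correct, and you are right to flag the displayed Golden--Thompson inequality as misstated: the right-hand side should be the product $\mathrm{tr}(\exp(\bm{X})\exp(\bm{Y}))$, not the sum, and your scalar counterexample $x=y=1$ disposes of the printed version immediately. The paper itself offers no proof of this proposition at all; it is presented in the appendix purely as recalled background with the sentence ``These results are well known and can be found in modern matrix analysis textbooks \citep[see, e.g.,][]{bhatia2013matrix}.'' So there is no argument in the paper to compare your approach against: the authors simply defer to Bhatia, which is exactly the fallback you propose for the Golden--Thompson step. Your derivations of the power-series identity via conjugation and of trace monotonicity via Weyl's inequality are clean and standard, and your Lie--Trotter plus trace-power route to Golden--Thompson is one of the classical proofs recorded in the reference the paper cites.
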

\begin{remark}
The matrix exponential is not monotone: $\bm{X} \preceq \bm{Y} \centernot\implies \exp(\bm{X}) \preceq \exp(\bm{Y})$ \citep[Ch.V]{bhatia2013matrix}.
\end{remark}

\begin{definition}[Matrix logarithm]
Let $\bm{X} \in \mathcal{S}^n$ be a symmetric matrix with eigendecomposition $\bm{X}=\bm{U}\bm{\Lambda}\bm{U}^\top$. Letting $\log(\bm{\Lambda})=\mathrm{diag}(\log(\lambda_1), \log(\lambda_2), \ldots, \log(\lambda_n))$, we have
    $\log(\bm{X}):=\bm{U}\log(\bm{\Lambda})\bm{U}^\top$.
\end{definition}
\begin{proposition}
The matrix logarithm, $\log(\bm{X}): \mathcal{S}^n_{++} \rightarrow \mathcal{S}^n$, satisfies the following properties:
\begin{itemize}
    \item Operator monotonicity: $\bm{X} \preceq \bm{Y} \implies \log(\bm{X}) \preceq  \log(\bm{Y})$.
    \item Functional inversion: $\log(\exp(\bm{X}))=\bm{X} \quad \forall \bm{X} \in \mathcal{S}^n$.
    \item Jacobi formula I: $\mathrm{tr}(\log(\bm{X}))=\log\det(\bm{X})$.
    \item Jacobi formula II: $\exp\left(\frac{1}{n}\mathrm{tr}\log(\bm{X})\right)=\det(\bm{X})^\frac{1}{n}$.
\end{itemize}
\end{proposition}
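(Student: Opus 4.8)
The plan is to handle three of the four properties directly from the spectral definition $\log(\bm{X}) = \bm{U}\log(\bm{\Lambda})\bm{U}^\top$, and to isolate operator monotonicity, which is the only property that does not reduce to a scalar computation in a fixed eigenbasis.

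First I would prove functional inversion. Fix $\bm{X} \in \mathcal{S}^n$ with eigendecomposition $\bm{X} = \bm{U}\bm{\Lambda}\bm{U}^\top$. By definition $\exp(\bm{X}) = \bm{U}\exp(\bm{\Lambda})\bm{U}^\top$ has strictly positive eigenvalues $e^{\lambda_i}$, so it lies in $\mathcal{S}^n_{++}$ and its logarithm is well defined; moreover $\bm{U}\exp(\bm{\Lambda})\bm{U}^\top$ is itself an eigendecomposition. Applying the definition of $\log$ and the scalar identity $\log e^{\lambda} = \lambda$ gives $\log(\exp(\bm{X})) = \bm{U}\,\mathrm{diag}(\log e^{\lambda_1}, \ldots, \log e^{\lambda_n})\,\bm{U}^\top = \bm{U}\bm{\Lambda}\bm{U}^\top = \bm{X}$. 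Next, the two Jacobi formulas follow from invariance of the trace: since $\mathrm{tr}(\log(\bm{X})) = \sum_i \log(\lambda_i) = \log\bigl(\prod_i \lambda_i\bigr) = \log\det(\bm{X})$ for $\bm{X} \in \mathcal{S}^n_{++}$, formula I is immediate, and exponentiating $\tfrac1n \mathrm{tr}\log(\bm{X}) = \tfrac1n\log\det(\bm{X})$ yields formula II, namely $\exp\bigl(\tfrac1n\mathrm{tr}\log(\bm{X})\bigr) = \det(\bm{X})^{1/n}$.

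The substantive step is operator monotonicity, for which I would appeal to the integral (L\"owner) representation $\log(x) = \int_0^\infty \bigl(\tfrac{1}{1+s} - \tfrac{1}{x+s}\bigr)\,\mathrm{d}s$, valid for every $x>0$. Diagonalizing in a common eigenbasis, this lifts to $\log(\bm{X}) = \int_0^\infty \bigl((1+s)^{-1}\mathbb{I} - (\bm{X}+s\mathbb{I})^{-1}\bigr)\,\mathrm{d}s$ on $\mathcal{S}^n_{++}$, because the identity holds eigenvalue-wise. It then suffices to show the integrand is monotone in the L\"owner order: if $\bm{0}\prec\bm{X}\preceq\bm{Y}$ then $\bm{X}+s\mathbb{I}\preceq\bm{Y}+s\mathbb{I}$, and antitonicity of the matrix inverse on $\mathcal{S}^n_{++}$ gives $(\bm{X}+s\mathbb{I})^{-1}\succeq(\bm{Y}+s\mathbb{I})^{-1}$, hence $-(\bm{X}+s\mathbb{I})^{-1}\preceq-(\bm{Y}+s\mathbb{I})^{-1}$ for each $s$; integrating this pointwise inequality over $s\in[0,\infty)$ preserves the order and delivers $\log(\bm{X})\preceq\log(\bm{Y})$. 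The one auxiliary fact I would record is the antitonicity of inversion: from $\bm{A}\succeq\bm{B}\succ\bm{0}$ one has $\bm{B}^{-1/2}\bm{A}\bm{B}^{-1/2}\succeq\mathbb{I}$, whence $\bm{B}^{1/2}\bm{A}^{-1}\bm{B}^{1/2}\preceq\mathbb{I}$, and conjugating by $\bm{B}^{-1/2}$ gives $\bm{A}^{-1}\preceq\bm{B}^{-1}$.

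The hard part is concentrated entirely in operator monotonicity, since $\bm{X}$ and $\bm{Y}$ need not commute and so the conclusion cannot be read off a simultaneous diagonalization; the integral representation is precisely the device that converts the order relation between $\bm{X}$ and $\bm{Y}$ into one between their logarithms. The remaining details — convergence of the integral and validity of the eigenvalue-wise lift — are routine, and the other three properties are immediate from the definition.
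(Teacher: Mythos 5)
Your proof is correct. Note that the paper itself offers no proof of this proposition: it is stated as background in Appendix C with the properties delegated to the cited reference \citep[Ch. V]{bhatia2013matrix}, so there is no in-paper argument to compare against. What you supply is exactly the standard treatment from that source: the three ``easy'' items (functional inversion and both Jacobi formulas) follow by spectral calculus in a fixed eigenbasis, and operator monotonicity — correctly identified as the only non-commutative step — is handled via the L\"owner integral representation $\log(x)=\int_0^\infty\bigl(\tfrac{1}{1+s}-\tfrac{1}{x+s}\bigr)\,\mathrm{d}s$, the antitonicity of matrix inversion on $\mathcal{S}^n_{++}$ (which you prove cleanly by conjugation with $\bm{B}^{-1/2}$), and pointwise integration of the semidefinite ordering; your closing remarks about integrability (the integrand decays like $s^{-2}$) and the eigenvalue-wise lift are accurate. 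In effect, you have written out the proof the paper's citation points to, and every step checks out.
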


\subsection{\blue Properties of operator functions}
{\blue  Among other properties, one can show that the trace of operator functions is invariant under an orthogonal rotation, i.e., $\mathrm{tr}(f_\omega(\bm{X}))=\mathrm{tr}(f_\omega(\bm{U}^\top \bm{X}\bm{U}))$ for any orthogonal rotation $\bm{U}$. Also, if $\omega$ is analytical, then $f_\omega$ is also analytical with the same Taylor expansion. 

In our analysis (in particular the proof of Proposition \ref{prop:operatorperspective}), we will use this simple bound on $\bm{v}^\top f_\omega(\bm{A})\bm{v}$ in the case where $\omega$ is convex:
\begin{lemma} \label{lemma:peierls} Consider a convex function $\omega : \mathbb{R} \rightarrow \mathbb{R}$ and a symmetric matrix $\bm{A} \in \mathcal{S}^n$. Consider 
a unit vector $\bm{v}$. Then,  
\begin{align*}
    \bm{v}^\top f_\omega(\bm{A}) \bm{v} \geq\omega \left( \bm{v}^\top \bm{A} \bm{v} \right).
\end{align*}
\end{lemma}
\begin{proof} Consider a spectral decomposition of $\bm{A}$, $\bm{A} = \sum_{i=1}^n \lambda_i \bm{u}_i \bm{u}_i^\top$. Then, $f_\omega(\bm{A}) = \sum_{i=1}^n \omega(\lambda_i) \bm{u}_i\bm{u}_i^\top$ and
\begin{align*}
    \bm{v}^\top f_\omega(\bm{A}) \bm{v} = \sum_{i=1}^n \omega(\lambda_i) \bm{v}^\top \bm{u}_i \bm{u}_i^\top \bm{v}
     \geq \omega\left( \sum_{i=1}^n \lambda_i \bm{v}^\top \bm{u}_i \bm{u}_i^\top \bm{v} \right) =
    \omega \left( \bm{v}^\top \bm{A} \bm{v} \right),
\end{align*}
where the inequality comes from the convexity of $\omega$ since $\bm{v}^\top \bm{u}_i \bm{u}_i^\top \bm{v} = (\bm{u}_i^\top \bm{v})^2 \geq 0$ and $\sum_{i =1}^n \bm{v}^\top \bm{u}_i \bm{u}_i^\top \bm{v} = \bm{v}^\top \left( \sum_{i =1}^n  \bm{u}_i \bm{u}_i^\top \right) \bm{v} = \| \bm{v} \|^2 = 1$.
\end{proof}
}

\section{Omitted Proofs}\label{append:proofs}
In this section, we supply all omitted proofs, in the order the results were stated.

{\color{black}
\subsection{Proof of Proposition \ref{prop:operatorperspective}}\label{ssec:A.proof.opepersp}
\begin{proof} Fix $\bm{X} \in \mathcal{S}^n$. For $\bm{Y} \succ \bm{0}$, the perspective of $f_\omega$ is well-defined according to Definition \ref{defn:matrixconv}. Now, consider an arbitrary $\bm{Y} \succeq \bm{0}$ and define $\bm{P}$ as the orthogonal projection onto the kernel of $\bm{Y}$, which is orthogonal to $\operatorname{Span}(\bm{Y})$. Then, $\bm{Y}_\varepsilon := \bm{Y} + \varepsilon \bm{P}$ for $\varepsilon > 0$ is invertible. The closure of the matrix perspective of $f_\omega$ is defined by continuity as the limit of $\bm{M}_\varepsilon := \bm{Y}_\varepsilon^{\frac{1}{2}} f_\omega\left( \bm{Y}_\varepsilon^{-\frac{1}{2}}\bm{X}\bm{Y}_\varepsilon^{-\frac{1}{2}} \right) \bm{Y}_\varepsilon^{\frac{1}{2}}$ for $\varepsilon \to 0$.

Since the ranges of $\bm{Y}$ and $\bm{P}$ are orthogonal ($\bm{Y} \bm{P} = \bm{P} \bm{Y} = \bm{0}$), we have $\bm{Y}_\varepsilon^{-\frac{1}{2}} = \bm{Y}^{-\frac{1}{2}} + {\varepsilon}^{-\frac{1}{2}} \bm{P}$, and
\begin{align*}
    \bm{Y}_\varepsilon^{-\frac{1}{2}}\bm{X}\bm{Y}_\varepsilon^{-\frac{1}{2}} &=
    \bm{Y}^{-\frac{1}{2}}\bm{X}\bm{Y}^{-\frac{1}{2}} + {\varepsilon}^{-\frac{1}{2}} \bm{P} \bm{X}\bm{Y}^{-\frac{1}{2}} +
    {\varepsilon}^{-\frac{1}{2}} \bm{Y}^{-\frac{1}{2}}\bm{X} \bm{P} + 
    {\varepsilon}^{-1} \bm{P} \bm{X} \bm{P}.
\end{align*}
Note that $\displaystyle \lim_{\varepsilon \to 0} \bm{Y}_\varepsilon^{\frac{1}{2}} = \bm{Y}^{\frac{1}{2}}$ but $\displaystyle \lim_{\varepsilon \to 0} \bm{Y}_\varepsilon^{-\frac{1}{2}} \neq \bm{Y}^{-\frac{1}{2}}$. 
We now distinguish two cases. 

{\bf Case 1: } If $\operatorname{span}(\bm{X}) \subseteq \operatorname{span}(\bm{Y})$, $\bm{X} \bm{P} = \bm{P} \bm{X} = \bm{0}$ so 
\begin{align*}
    \bm{Y}_\varepsilon^{-\frac{1}{2}}\bm{X}\bm{Y}_\varepsilon^{-\frac{1}{2}} &=
    \bm{Y}^{-\frac{1}{2}}\bm{X}\bm{Y}^{-\frac{1}{2}}, \\
    \bm{M}_\varepsilon &= \bm{Y}_\varepsilon^{\frac{1}{2}} f_\omega\left(  \bm{Y}^{-\frac{1}{2}}\bm{X}\bm{Y}^{-\frac{1}{2}} \right) \bm{Y}_\varepsilon^{\frac{1}{2}} \quad 
    \to_{\varepsilon \to 0} \bm{Y}^{\frac{1}{2}} f_\omega\left(  \bm{Y}^{-\frac{1}{2}}\bm{X}\bm{Y}^{-\frac{1}{2}} \right) \bm{Y}^{\frac{1}{2}}.
\end{align*}

{\bf Case 2: } If $\operatorname{span}(\bm{X}) \not\subseteq \operatorname{span}(\bm{Y})$, consider an orthonormal basis of $\mathbb{R}^n$ such that $\bm{u}_1, \dots, \bm{u}_k$ is an eigenbasis of $\operatorname{Span}(\bm{Y})$ (with respective eigenvalues $\lambda^y_{1}, \dots \lambda^y_{k}$) and  $\bm{u}_{k+1}, \dots, \bm{u}_n$ is a basis of $\operatorname{Span}(\bm{Y})^\perp = \operatorname{Ker}(\bm{Y})$.  By assumption, $k < n$ and there exists $j > k$ such that $\bm{u}_j^\top \bm{X} \bm{u}_j \neq 0$. Without loss of generality, we shall assume $\bm{u}_n^\top \bm{X} \bm{u}_n \neq 0$. We show that the matrix $\bm{M}_\varepsilon$ goes to infinity as $\varepsilon \to 0$  by showing that $\bm{u}_n^\top \bm{M}_\varepsilon \bm{u}_n$ diverges. 

Since $\bm{Y}_\varepsilon^{\pm \frac{1}{2}} \bm{u}_n = \varepsilon^{\pm \frac{1}{2}} \bm{u}_n$,
we have
\begin{align*}
\bm{u}_n^\top \bm{M}_\varepsilon \bm{u}_n 
=  \varepsilon \ \bm{u}_n^\top f_\omega\left( \bm{Y}_\varepsilon^{-\frac{1}{2}}\bm{X}\bm{Y}_\varepsilon^{-\frac{1}{2}} \right) \bm{u}_n 
\quad \geq \varepsilon \ \omega \left( \bm{u}_n^\top \bm{Y}_\varepsilon^{-\frac{1}{2}}\bm{X}\bm{Y}_\varepsilon^{-\frac{1}{2}} \bm{u}_n \right) 
= \varepsilon \ \omega \left( \varepsilon^{-1} \bm{u}_n^\top \bm{X} \bm{u}_n \right),
\end{align*}
where the inequality follows from the convexity of $\omega$ and Lemma \ref{lemma:peierls}. By Assumption \ref{ass:coercive},
\begin{align*}
    \lim_{\varepsilon \to 0} \varepsilon  \omega \left( \varepsilon^{-1} \bm{u}_n^\top \bm{X}\bm{u}_n \right) =  \omega_{\infty}(\bm{u}_n^\top \bm{X}\bm{u}_n) = +\infty,
\end{align*}
because $\bm{u}_n^\top \bm{X}\bm{u}_n \neq 0$ and $\omega$ is coercive. \quad \qed
\end{proof}
}

{\blue We now provide a simple extension of Proposition \ref{prop:operatorperspective} that will prove useful later in our exposition. 
\begin{corollary} Consider a function $\omega : \mathbb{R} \rightarrow \mathbb{R}$ satisfying Assumption \ref{ass:coercive} and denote its associated operator function $f_\omega$. Consider a closed set $\mathcal{X} \subseteq \mathcal{S}^n$ and define 
\begin{align*}
    f(\bm{X}) = \begin{cases} f_\omega(\bm{X}) & \mbox{ if } \bm{X} \in \mathcal{X}, \\ +\infty & \mbox{ otherwise.} \end{cases}
\end{align*}
Then, the closure of the matrix perspective of $f$ is, for any $\bm{X} \in \mathcal{S}^n$, $\bm{Y} \in \mathcal{S}_+^n$, 
\begin{align*}
    g_{f}(\bm{X},\bm{Y}) = \begin{cases} \bm{Y}^\frac{1}{2} f_\omega(\bm{Y}^{-\frac{1}{2}}\bm{X}\bm{Y}^{-\frac{1}{2}})\bm{Y}^{\frac{1}{2}} & \mbox{ if } \operatorname{Span}(\bm{X}) \subseteq \operatorname{Span}(\bm{Y}), \bm{Y} \succeq \bm{0}, \bm{Y}^{-\frac{1}{2}}\bm{X}\bm{Y}^{-\frac{1}{2}} \in \mathcal{X}, \\
    \infty & \mbox{ otherwise, }
    \end{cases}
\end{align*}
where $\bm{Y}^{-\frac{1}{2}}$ denotes the pseudo-inverse of the square root of $\bm{Y}$.
\end{corollary}
\begin{proof} Fix $\bm{X} \in \mathcal{S}^n$ and $\bm{Y} \in \mathcal{S}^n_+$. From Proposition \ref{prop:operatorperspective}, we know that $g_f(\bm{X},\bm{Y}) = + \infty$ if $\operatorname{Span}(\bm{X}) \not\subseteq \operatorname{Span}(\bm{Y})$. Let us assume that $\operatorname{Span}(\bm{X}) \subseteq \operatorname{Span}(\bm{Y})$. Following the same construction as in the proof of Proposition  \ref{prop:operatorperspective}, we obtain a sequence $\bm{Y}_\varepsilon$ that converges to $\bm{Y}$ as $\varepsilon \rightarrow 0$ and such that $\bm{Y}_\varepsilon^{-\frac{1}{2}}\bm{X}\bm{Y}_\varepsilon^{-\frac{1}{2}} = \bm{Y}^{-\frac{1}{2}}\bm{X}\bm{Y}^{-\frac{1}{2}}$, which concludes the proof. \qed
\end{proof}
}
{\blue
\subsection{Perspective functions with non-commuting matrices} \label{ssec:A.persp.noncomm}
In contrast with Proposition \ref{prop:persp.commute}, in the general case where $\bm{X}$ and $\bm{Y}$ do not commute, we cannot simultaneously diagonalize them and connect $g_{f_\omega}$ with $g_\omega$. However, we can still project $\bm{Y}$ onto the space of matrices that commute with $\bm{X}$ and obtain the following result when $g_{f_\omega}$ is matrix convex: 
\begin{lemma} \label{lemma:traceineq.proj}
Let $\bm{X} \in \mathcal{S}^n$ and $\bm{Y} \in \mathcal{S}_+^n$ be matrices, and define $\mathcal{X} := \{ \bm{M} \ : \ \bm{MX} = \bm{XM} \}$ as the set of matrices which commute with $\bm{X}$. For any matrix $\bm{M}$, denote $\bm{M}_{|\mathcal{X}}$ the orthogonal projection of $\bm{M}$ onto $\mathcal{X}$. Then, since $\bm{M} \mapsto \bm{M}_{|\mathcal{X}}$ is a projection operator, we have that
\begin{align*}
    \bm{Y}_{|\mathcal{X}} \in \mathcal{S}^n_+,\ \mbox{ and } \ \operatorname{tr}\left( \bm{Y}_{|\mathcal{X}} \right) =\operatorname{tr}\left( \bm{Y} \right).
\end{align*}
Moreover, if $\bm{Y} \mapsto g_{f_\omega}(\bm{X},\bm{Y})$ is matrix convex, then we have
\begin{align*}
    \operatorname{tr}\left[ g_{f_\omega}(\bm{X}, \bm{Y}_{|\mathcal{X}}) \right] \leq  \operatorname{tr}\left[ g_{f_\omega}(\bm{X}, \bm{Y}) \right].
\end{align*}
\end{lemma}
\begin{proof}
First, let us observe that $\mathcal{X}$ is a closed subset of $\mathcal{S}^n$, contains the identity, and is closed under multiplication and transposition, also know as a Von Neumann subalgebra  \citep[see][Section 4 for a detailed treatment of projections onto subalgebras]{carlen2010trace}. 
The orthogonal projection of a semidefinite matrix onto $\mathcal{X}$ is also semidefinite and has the same trace \citep[Theorem. 4.13]{carlen2010trace}, so
\begin{align*}
    \operatorname{tr}\left( \bm{Y}_{|\mathcal{X}} \right) =\operatorname{tr}\left( \bm{Y} \right).
\end{align*}
Furthermore, since $\bm{Y} \mapsto g_{f_\omega}(\bm{X},\bm{Y})$ is matrix convex, \citet[Theorem 4.16]{carlen2010trace} yields
\begin{align*}
    g_{f_\omega}(\bm{X}, \bm{Y}_{|\mathcal{X}}) \preceq g_{f_\omega}(\bm{X}, \bm{Y})_{|\mathcal{X}}.
\end{align*}
Taking the trace on both sides and using that $\operatorname{tr}\left( g_{f_\omega}(\bm{X}, \bm{Y})_{|\mathcal{X}} \right) = \operatorname{tr}\left( g_{f_\omega}(\bm{X}, \bm{Y}) \right)$ concludes the proof. \qed
\end{proof}
In other words, taking the projection of $\bm{Y}$ onto the commutant of $\bm{X}$ is a trace preserving operation that can only reduce the value of $\operatorname{tr}\left( g_{f_\omega}(\bm{X}, \cdot) \right)$. In this paper, we invoke the projection onto $\mathcal{X}$ (a non-convex set) for theoretical purposes, not computational ones. So we are not interested in how to compute $\bm{Y}_{|\mathcal{X}}$ in practice. Note that, according to Proposition \ref{prop:genperspproperties}(a), Lemma \ref{lemma:traceineq.proj} holds if $f_\omega$ is matrix convex.
}
{\color{black}
\subsection{Counterexample to joint convexity of trace of matrix perspective of cube}\label{append:counterexample}
In this section, we demonstrate by counterexample that if $\omega$ is a convex and continuous function then, even though the trace of its matrix extension, $\mathrm{tr}(f_\omega)$, is convex \citep[c.f.][Theorem 2.10]{carlen2010trace}, the trace of its matrix perspective need not be convex. 

Specifically, let us consider $\omega(x)=x^3$. In this case, $\omega$ is convex on $\mathbb{R}_+$, $f_\omega$ is not matrix convex, but $\mathrm{tr}(f_\omega)$ is matrix convex. We have that 
\begin{align*}
    \mathrm{tr}(g_{f_\omega}(\bm{X}, \bm{Y}))=\mathrm{tr}\left(\bm{X}\bm{Y}^\dag\bm{X}\bm{Y}^\dag\bm{X}\right)
\end{align*}
for $\bm{X} \in \mathrm{Span}(\bm{Y}), \bm{X}, \bm{Y} \in \mathcal{S}^n_+$.
Let us now consider
\begin{align*}
    \bm{Y}_1=\begin{pmatrix} 0.160378 & 0.343004 \\ 0.343004 & 0.764592\end{pmatrix}, \quad \bm{Y}_2=\begin{pmatrix} 0.0859208 & 0.181976 \\ 0.181976 & 0.52666\end{pmatrix},\\
    \bm{X}_1=\begin{pmatrix}0.242865 & 0.543321\\ 0.543321 & 1.26604 \end{pmatrix}, \quad \bm{X}_2=\begin{pmatrix} 0.0595215 & 0.241702\\ 0.241702 & 1.0596\end{pmatrix}.
\end{align*}
Then, some elementary algebra reveals that 
\begin{align*}
    \mathrm{tr}\left[g_{f_\omega}\left(\tfrac{1}{2}\bm{X}_1+\tfrac{1}{2}\bm{X}_2, \tfrac{1}{2}\bm{Y}_1+\tfrac{1}{2}\bm{Y}_2\right)\right]=6.248327,
\end{align*}
while 
\begin{align*}
    \tfrac{1}{2}\mathrm{tr}\left[g_{f_\omega}\left(\bm{X}_1, \bm{Y}_1\right)\right]+\tfrac{1}{2}\mathrm{tr}\left[g_{f_\omega}\left(\bm{X}_2, \bm{Y}_2\right)\right]=6.23977,
\end{align*}
which verifies that $\mathrm{tr}(g_{f_\omega}(\bm{X}, \bm{Y}))$ is not midpoint convex in $(\bm{X}, \bm{Y})$, despite $\mathrm{tr}(f_\omega)$ being convex. 
}

\subsection{Proof of Proposition \ref{prop:powerconecl}}
\begin{proof}
We use the proof technique laid out in \citep[Section 3.1]{han20202x2}, namely writing $\mathcal{T}$ as the disjunction of two convex sets driven by whether $z$ is active and applying Fourier-Motzkin elimination. That is, we have $\mathcal{T}=\mathcal{T}^1 \cup \mathcal{T}^2$ where:
\begin{align*}
    & \mathcal{T}^1=\left\{(0,y_1,0,t_1): t_1\geq \vert y_1-d\vert^q \right\},\\
    & \mathcal{T}^2=\left\{(x_2,y_2,1,t_2): t_2\geq \vert x_2-y_2-d\vert^q, \vert x_2 \vert \leq M\right\}.
\end{align*}
Moreover, a point $(x,y,z,t)$ is in the convex hull $\mathcal{T}^c$ if and only if it can be written as a convex combination of points in $\mathcal{T}^1, \mathcal{T}^2$. Letting $\lambda_1, \lambda_2$ denote the weight of points in this system, we then have that $(x,y,z,t) \in \mathcal{T}^c$ if and only if the following system admits a solution:
\begin{equation}
\begin{aligned}
    &\lambda_1+\lambda_2=1, \\
    &x=\lambda_2 x_2 ,\\
    &y=\lambda_1 y_1+\lambda_2 y_2,\\
    &t=\lambda_1 t_1+\lambda_2 t_2,\\
    &z=\lambda_2,\\
    &t_1 \geq \vert y_1-d\vert^q,\\
    &t_2 \geq \vert x_2+y_2-d\vert^q,\\
    &\lambda_1, \lambda_2 \geq 0,\\
    & \vert x_2 \vert \leq M.
\end{aligned}
\end{equation}
For ease of computation, we now eliminate variables. First, one can substitute $t_1, t_2$ for their lower bounds in the definition of $t$ and replace $\lambda_2$ with $z$ to obtain
\begin{equation}
\begin{aligned}
    &\lambda_1+z=1, \\
    &x=z x_2 ,\\
    &y=\lambda_1 y_1+z y_2,\\
    &t\geq \lambda_1 \vert y_1-d\vert^q+z \vert x_2+y_2-d\vert^q,\\
   & \lambda_1, z \geq 0,\\
   & \vert x_2 \vert \leq M.
\end{aligned}
\end{equation}
Next, we substitute $x/z$ for $x_2$ and $(y-z y_2)/\lambda_1$ for $y_1$ to obtain
\begin{equation}
\begin{aligned}
   & \lambda_1+z=1, \     \lambda_1, z \geq 0, \ \vert x \vert \leq M z\\
   &  t\geq \frac{1}{\lambda_1^{q-1}}\vert y- y_2 z-d(1-z)\vert^q+\frac{1}{z^{q-1}}\vert x+ y_2 z-dz\vert^q.\
\end{aligned}
\end{equation}
Finally, we let $z y_2$ be the free variable $\beta$ and set $\lambda_1=1-z$ to obtain the required convex set.\quad \qed
\end{proof}

\section{{\color{black}Generalizing} the Matrix Perspective Reformulation Technique {\color{black}to Functions} }\label{sec:mprtextension}
We now demonstrate the MPRT can be extended to incorporate a {\color{black}different} separability of eigenvalues assumption, at the price of (a possibly significant amount of) additional notations. {\blue For any symmetric matrix $\bm{X}$, let us denote $\lambda_i^{\downarrow}(\bm{X})$ the $i$th largest eigenvalue of $\bm{X}$. }
Before proceeding any further, we recall the following result, due to \citep[Example 18.c]{ben2001lectures}, which provides a semidefinite representation of the sum of the $k$ largest eigenvalues:
\begin{lemma}[Representability of sums of largest eigenvalues]\label{lemma:sdrk}
Let $S_k(\bm{X}):=\sum_{i=1}^k \lambda_i^{\blue \downarrow}(\bm{X})$ denote the sum of the $k$ largest eigenvalues of a {\blue symmetric} matrix $\bm{X} \in \mathcal{S}^n$. Then, the epigraph of $S_k$, $S_k(\bm{X}) \leq t_k$, admits the {\blue following semidefinite} representation:
\begin{align*}
    t_k \geq k s_k+\mathrm{tr}(\bm{Z}_k), \ \bm{Z}_k+s_k \mathbb{I} \succeq \bm{X}, \bm{Z}_k \succeq \bm{0}. 
\end{align*}
\end{lemma}

{\blue Based on this result, we can relax the assumption that the penalty term $\Omega(\bm{X})$ corresponds to the trace of an operator function. Instead, we can assume:}
\begin{assumption} $\Omega(\bm{X})=\sum_{i \in [n]} p_i \lambda_i^{\blue \downarrow}(f_\omega(\bm{X}))$, where $p_1 \geq \ldots \geq p_n \geq 0$ and {\blue where $\omega$ is 
a function satisfying Assumption \ref{ass:coercive} and whose associated operator function, $f_\omega$, is matrix convex.}
\label{assumption:weightedsep}
 \end{assumption}
 
This assumption is particularly suitable for Markov Chain problems \citep[see, e.g.,][Chapter 4.6]{boyd2004convex}, where we are interested in controlling the behaviour of the {\color{black}largest eigenvalue (which always equals $1$) plus the }second largest eigenvalue of a matrix. However, it might appear to be challenging to model, since, e.g., $\lambda_2^{\blue \downarrow}(\bm{X})$ is a non-convex function. 
By applying a telescoping sum argument reminiscent of the one in \citep[Prop. 4.2.1]{ben2001lectures}, namely
\begin{align*}
    {\blue \Omega(\bm{X}) =} \sum_{i=1}^n p_i {\blue \lambda_i^{\downarrow}(f(\bm{X}))} =\sum_{i=1}^n (p_i-p_{i+1}) {\blue S_i(f(\bm{X}))}
\end{align*}
with the convention $p_{n+1}=0$,
Lemma \ref{lemma:sdrk} allows us to rewrite low-rank problems where $\Omega(\bm{X})$ satisfies Assumption \ref{assumption:weightedsep} in the form:
\begin{align}\label{prob:lrsdo9}
    \min_{\bm{Y} \in \mathcal{Y}^k_n}\min_{\substack{\bm{X} \in \mathcal{S}^n_+, \\
    \bm{Z}_i \in \mathcal{S}^n_+, s_i, t_i \in \mathbb{R}_+ \ \forall i \in [n]}} \ & \langle \bm{C}, \bm{X} \rangle+\mu \cdot \mathrm{tr}(\bm{Y})+\sum_{i=1}^n (p_i-p_{i+1})t_i
    \\
    \text{s.t.} \quad & \langle \bm{A}_i, \bm{X}\rangle=b_i \ \forall i \in [m], \ \bm{X}=\bm{Y}\bm{X}, \ \bm{X} \in \mathcal{K}, \nonumber\\
    & t_i \geq i s_i+\mathrm{tr}(\bm{Z}_i), \ \bm{Z}_i+s_i \mathbb{I} \succeq f(\bm{X}), \bm{Z}_i \succeq \bm{0} \ \forall i \in [n],\nonumber
\end{align}
where $t_i$ models the sum of the $i$ largest eigenvalues of $f(\bm{X})$. Applying the MPRT then yields the following {\blue extension} to Theorem \ref{lemma:equivalence}:
\begin{proposition}
Suppose Problem \eqref{prob:lrsdo9} attains a finite optimal value. Then, the following problem attains the same value:
\begin{align}\label{prob:lrsdo4}
    \min_{\bm{Y} \in \mathcal{Y}^k_n}\min_{\substack{\bm{X} \in \mathcal{S}^n_+, \\
    \bm{Z}_i \in \mathcal{S}^n_+, s_i, t_i \in \mathbb{R}_+ \ \forall i \in [n]}} \ & \langle \bm{C}, \bm{X} \rangle+\mu \cdot \mathrm{tr}(\bm{Y})+\sum_{i=1}^n (p_i-p_{i+1})t_i
    \\
    \text{s.t.} \quad & \langle \bm{A}_i, \bm{X}\rangle=b_i \ \forall i \in [m], \ {\color{black}\bm{Y}^{-\frac{1}{2}}\bm{X}\bm{Y}^{-\frac{1}{2}} \in \mathcal{K}, \nonumber}\\
    & t_i \geq i s_i+i-\mathrm{tr}(\bm{Y})+\mathrm{tr}(\bm{Z}_i)\ \forall i \in [n],\nonumber \\
    &\bm{Z}_i+s_i \mathbb{I} \succeq g_f(\bm{X}, \bm{Y}){\color{black}+\omega(0)(\mathbb{I}-\bm{Y})}, \bm{Z}_i \succeq \bm{0} \ \forall i \in [n].\nonumber
\end{align}
\end{proposition}
{\blue The proof of this reformulation is almost identical to the proof of Theorem \ref{lemma:equivalence}, after observing that \eqref{eqn:traceq} holds not only for the traces but for the matrices directly, i.e., if $\bm{X}$ and $\bm{Y} \in \mathcal{Y}^k_n$ commute, we have 
\begin{align*}
    f(\bm{X}) = g_f(\bm{X},\bm{Y}) + \omega(0) (\mathbb{I} - \bm{Y}).
\end{align*}}
Problem \eqref{prob:lrsdo4} involves $n$ times as many variables as Problem \eqref{prob:lrsdo2} and therefore supplies substantially less tractable relaxations.  Nonetheless, it could be useful in {\blue specific} instances. In the aforementioned Markov Chain mixing problem, $p_i-p_{i+1}=0 \ \forall i \geq k$ with $k=2$, so we can omit the variables which model the eigenvalues larger than $2$ .

{\blue 
\section{Extension to the rectangular case} \label{sec:A.rectangular}
In this section, we extend the MPRT to the case where $\bm{X}$ is a generic $n \times m$ matrix and $f(\bm{X})$ is the convex quadratic penalty $f(\bm{X})=\bm{X}^\top \bm{X}$. In this case, $\operatorname{tr}(f(\bm{X}))=\Vert \bm{X}\Vert_F^2$ is the squared Frobenius norm of $\bm{X}$.  

First, observe that $f : \mathbb{R}^{n \times m} \rightarrow \mathcal{S}^m_+$. Alternatively, one could have considered $g(\bm{X}) = \bm{X}\bm{X}^\top \in \mathcal{S}_+^n$ and obtain the same penalty, i.e., $\operatorname{tr}(f(\bm{X})) = \operatorname{tr}(g(\bm{X}))$. In other words, one can arbitrarily choose whether $f$ preserves the row or the column space of $\bm{X}$. By the Schur complement lemma, the epigraph is semidefinite representable via 
\begin{align*}
    \operatorname{epi}(f) := \left\{ (\bm{X}, \bm{\theta} ) \in \mathbb{R}^{n \times m} \times \mathcal{S}^m_+ \ : \ \begin{pmatrix}
        \bm{\theta} & \bm{X}^\top \\ \bm{X} & \mathbb{I} \end{pmatrix} \succeq \bm{0} \right\},
\end{align*}
so $f$ is matrix convex. 

In the symmetric case, we considered the matrix perspective of $f$ at $(\bm{X}, \bm{Y})$, where $\bm{Y} \succeq \bm{0}$ is a matrix controlling the range of $\bm{X}$. When $\bm{X}$ is no longer symmetric, it is natural to consider a matrix perspective function which involves two projection matrices, one of which models the row space and one which models the column space, as proposed in our prior work \cite{bertsimas2020mixed}. More precisely, for $\bm{Y}, \bm{Z} \succ \bm{0}$ we define a perspective of $f$ as  
\begin{align}
    g_f(\bm{X}, \bm{Y}, \bm{Z})= \bm{Z}^\frac{1}{2}f(\bm{Y}^{-\frac{1}{2}}\bm{X}\bm{Z}^{-\frac{1}{2}})\bm{Z}^\frac{1}{2}.
\end{align}
For $f(\bm{X}) = \bm{X}^\top \bm{X}$, this function actually does not depend on $\bm{Z}$. 
Hence, we consider 
\begin{align*}
    \Tilde{g}_f(\bm{X}, \bm{Y}) = g_f(\bm{X}, \bm{Y}, \bm{Z})= \bm{X}^\top \bm{Y}^{-1} \bm{X}.
\end{align*} 
Extending this function to positive semidefinite $\bm{Y}$ using the same proof technique as in Proposition \ref{prop:operatorperspective}, we then obtain
\begin{align*}
    \Tilde{g}_f(\bm{X}, \bm{Y}) =\begin{cases} \bm{X}^\top \bm{Y}^{\dagger} \bm{X} & \mbox{ if } \bm{Y} \succeq \bm{0},\ \operatorname{Span}(\bm{X}) \subseteq \operatorname{Span}(\bm{Y}), \\ \infty & \mbox{otherwise}. \end{cases}
\end{align*} 
\begin{proof} Fix $\bm{X} \in \mathcal{S}^n$ and $\bm{Y} \succeq \bm{0}$. As in the proof of  Proposition \ref{prop:operatorperspective} denote $\bm{P}$ the orthogonal projection onto the kernel of $\bm{Y}$, and define $\bm{Y}_\varepsilon := \bm{Y} + \varepsilon \bm{P}$ for $\varepsilon > 0$. Hence, 
\begin{align*}
    \bm{X}^\top \bm{Y}_\varepsilon^{-1} \bm{X} = \bm{X}^\top \bm{Y}^{\dagger}  \bm{X} + {\varepsilon}^{-1} \bm{X}^\top  \bm{P} \bm{X}.
\end{align*}
The right-hand side admits a finite limit if and only if $$\bm{X}^\top  \bm{P} \bm{X} = \bm{0} \iff \operatorname{Span}(\bm{X}) \subseteq \operatorname{Ker}(\bm{P}) = \operatorname{Span}(\bm{Y}). \quad \qed$$
\end{proof}

Furthermore, using the Schur complement lemma as in \cite{bertsimas2020mixed}, one can show that $\Tilde{g}_f$ is SDP-representable:
\begin{align*}
    \operatorname{epi}(\Tilde{g}_f)= 
    \left\{ (\bm{X}, \bm{Y}, \bm{\theta} ) \in \mathbb{R}^{n \times m} \times \mathcal{S}^n_+ \times \mathcal{S}^m \ : \ \begin{pmatrix}
        \bm{\theta} & \bm{X}^\top \\ \bm{X} & \bm{Y} \end{pmatrix} \succeq \bm{0} \right\},
\end{align*}
and hence matrix convex. 

Finally, we can easily check that Theorem \ref{lemma:equivalence} still holds in the symmetric case because \eqref{eqn:traceq} --which simplifies to $\operatorname{tr}(f(\bm{X})) = \operatorname{tr}(\Tilde{g}_f(\bm{X}))$ in this case-- holds for any $\bm{Y} \in \mathcal{Y}^k_n$ such that $\bm{X} = \bm{Y} \bm{X}$.

\begin{remark} We believe the approach outlined above could be generalized to a broader class of function that generalizes operator functions to the non-symmetric case. Namely, we could consider functions of the form
$$f_\omega(\bm{X})=\bm{V}\mathrm{Diag}\left( \omega(\sigma_1^x), \dots, \omega(\sigma_m^x)\right) \bm{V}^\top$$
where $\bm{X}=\bm{U}\mathrm{Diag}\left( \sigma_1^x, \dots, \sigma_m^x\right)\bm{V}^\top$ is a singular value decomposition of $\bm{X}$ and $\omega$ is a convex function satisfying Assumption \ref{ass:coercive}. Again, $f_\omega$ could arbitrarily be defined as preserving $\bm{U}$ or $\bm{V}$. For these functions, the perspective $g_{f_\omega}(\bm{X},\bm{Y},\bm{X})$ is well defined for $\bm{Y},\bm{Z} \succ \bm{0}$. Unlike in the quadratic case, however, its value will depend on both $\bm{Y}$ and $\bm{Z}$. 
Developing the theoretical tools necessary to extend the MPRT to rectangular matrices, is therefore a question for future research. 
\end{remark}
}

\end{appendices}
\end{document}